\newtheorem{theorem}{Theorem}[section]
\newtheorem{corollary}[theorem]{Corollary}
\newtheorem{lemma}[theorem]{Lemma}
\newtheorem{proposition}[theorem]{Proposition}
\theoremstyle{definition}
\newtheorem{definition}[theorem]{Definition}
\newtheorem{remark}[theorem]{Remark}
\newtheorem{example}[theorem]{Example}
\newcommand{\R}{\ensuremath{\mathbb{R}}}
\newcommand{\C}{\ensuremath{\mathcal{C}}}
\newcommand{\D}{\ensuremath{\mathcal{D}}}
\newcommand{\Es}{\ensuremath{\mathbb{S}}}
\newcommand{\Le}{\ensuremath{\mathcal{L}}}
\numberwithin{equation}{section}
\DeclareMathOperator{\arcsinh}{arcsinh}
\begin{document}
	\title{Jacobi fields in nonholonomic mechanics}
	\author{
		{\bf\large Alexandre Anahory Simoes}\hspace{2mm}
		\vspace{1mm}\\
		{\small  Instituto de Ciencias Matem\'aticas (CSIC-UAM-UC3M-UCM)} \\
		{\small C/Nicol\'as Cabrera 13-15, 28049 Madrid, Spain}\\
		{\it\small e-mail: \texttt{alexandre.anahory@icmat.es }}\\
		\vspace{2mm}\\
		{\bf\large Juan Carlos Marrero}\hspace{2mm}
		\vspace{1mm}\\
		{\it\small 	ULL-CSIC Geometr{\'\i}a Diferencial y Mec\'anica Geom\'etrica,}\\
		{\it\small  {Departamento de Matem\'aticas, Estad{\'\i}stica e I O, }}\\
		{\it\small  {Secci\'on de Matem\'aticas, Facultad de Ciencias}}\\
		{\it\small  Universidad de la Laguna, La Laguna, Tenerife, Canary Islands, Spain}\\
		{\it\small e-mail: \texttt{{jcmarrer@ull.edu.es}} }\\
		\vspace{2mm}\\
		{\bf\large David Martín de Diego}\hspace{2mm}
		\vspace{1mm}\\
		{\small  Instituto de Ciencias Matem\'aticas (CSIC-UAM-UC3M-UCM)} \\
		{\small C/Nicol\'as Cabrera 13-15, 28049 Madrid, Spain}\\
		{\it\small e-mail:  \texttt{david.martin@icmat.es} }\\		
	}
	
	\date{}

	\maketitle
	
	\vspace{0.5cm}
	\begin{abstract}
		In this paper, we define Jacobi fields for nonholonomic mechanics using a similar characterization than in Riemannian geometry. We give explicit conditions to find Jacobi fields and finally we find the nonholonomic Jacobi equations in two equivalent ways:  the first one, using an appropriate complete lift of the nonholonomic system and, in the second one,  using the curvature and torsion of the associated nonholonomic connection.

	\end{abstract}
	
	\let\thefootnote\relax\footnote{\noindent AMS {\it Mathematics Subject Classification ({2020})}. Primary 70G45; Secondary  53B20, 53C21,
		37J60, 70F25\\
		\noindent Keywords. Jacobi fields, {nonholonomic mechanics}, {nonholonomic geodesics}, {nonholonomic connection}, Jacobi equation
	}
	\tableofcontents
	
	\section{Introduction}
	
	Jacobi fields play a fundamental role in Riemannian geometry since they  provide a means of describing how fast the geodesics starting from a given point spread apart. This behaviour is controlled by the curvature of the Riemannian metric \citep*{docarmo,ON83,LeeR}. 
	More concretely, 
	let $(Q, g)$ be a Riemannian manifold where $Q$ is a differentiable manifold and $g$ a   given Riemannian metric, then a 
	Jacobi field  is  vector field $W: I\rightarrow TQ$ along a geodesic $c: I\rightarrow Q$ satisfying the  equation (Jacobi equation)
	\begin{equation}\label{jacobi-g}
	\frac{D^2 W}{dt^2}+R(W(t), \dot{c}(t))\dot{c}(t)=0
	\end{equation}
	where $R$ is the curvature of the Riemannian manifold.
	This equation measures the effect of curvature on  one-parameter families of geodesics being  Jacobi fields their infinitesimal generators. In this last sense, we can interpret Jacobi equation (\ref{jacobi-g}) as a second order  differential equation satisfied by the variation fields of one-parameters families of geodesics. 
	Another interesting approach is to derive an appropriate  variational principle which gives simultaneously the geodesic and Jacobi  equations for the Riemannian manifold, that is, to see the Jacobi equation as the Euler-Lagrange equations for a suitable Lagrangian  function defined now on $TTQ$ {(see \citep*{YaIs73})}.  
	
	
	The study of Jacobi fields is an important step to analyse the local and global geometry and topology of a Riemannian manifold. For instance, such important issues as the existence of conjugate points, the minimizing character of geodesics,  singularities of the exponential map where it fails to be a local diffeomorphism...
{(see, for instance, \citep*{docarmo,ON83,LeeR})}.
	
	Summarizing, the main steps to characterize Jacobi fields in Riemannian geometry are (see Table \ref{tab} for more details): 
	\begin{enumerate}
		\item Characterization of Jacobi fields in terms of infinitesimal geodesic variations.
		\item To show that  Killing vector fields $X\in {\mathfrak X}(Q)$ are  Jacobi fields along any geodesic $c$ in $Q$.
		\item Characterization of Jacobi fields as trajectories of the kinematic Lagrangian system obtained by lifting the metric to the tangent bundle.
		\item Derivation of the Jacobi equation in terms of the curvature of the Riemannian metric. 
		\item Study of conjugate points and relation with minimizing properties of geodesics.
	\end{enumerate}
	Given the importance of Jacobi fields, there has been great interest in generalizing these results to different situations, for example, to general second order differential equations (SODE's) in \citep*{CM92} {using the dynamical covariant derivative and the Jacobi endomorphism associated with the SODE \citep*{MaCaSa} (see also \citep*{HaMe} and the references therein}), to semi-Riemannian geometry \citep*{ON83}, to sub-Riemannian and Finsler geometry \citep*{agrachev, Bari}, to the Lie algebroid setting \citep*{CGM2015}, to skew-symmetric algebroids \citep*{Joz}, etc.

	However, the case of systems subjected to nonholonomic constraints has not be properly considered in the previous literature. Roughly speaking a nonholonomic system consists of a Lagrangian function $L: TQ\rightarrow \R$  and a nonintegrable distribution $\D$ where the dynamics is governed by the Lagrange-d'Alembert principle which is non-variational in the standard terminology {\citep*{Bloch, Cortes, Neimark, Pacific} (see also \citep*{Ma} for a discussion on the validity of the Lagrange-d'Alembert principle and \cite*{GG2008} for a general discussion on variational calculus with constraints). Usually, the Lagrangian is of the form $L\equiv T$ or $L\equiv T-V$ where $T$ is the kinetic energy associated to a Riemannian metric and $V$ is a potential function. Nonholonomic systems are present in many areas of applied research like wheeled vehicles, robotics, satellite dynamics and its dynamic is very intriguing from a qualitative point of view.  These are some of the reasons for the vast literature about this topic${}^1$\footnote{1. As a simple checking, we have found almost 2000 references in \textit{MathScinet} with the words nonholonomic or non-holonomic in the title}.
	
	One of the most successful approaches to  understanding the dynamics of nonholonomic  systems has been the use of differential geometry and, in particular, of Riemannian  geometry (see, for instance, {\citep*{Synge28, koiller, Lewis98, CaRa, cantrijn-e, Cortes, GrLeMaMa, BLMMM2011, GaMa}}). 
	One of the main ingredients is to introduce a nonholonomic connection obtained using the orthogonal projector associated to the decomposition $TQ=\D\oplus \D^\perp$ where $\D^\perp$ is the orthogonal distribution for the Riemannian metric $g$ {(see \citep*{Lewis98})}. 
	
	With the present paper, we start the geometric program of introducing some important concepts of standard Riemannian geometry to the nonholonomic setting. In particular, in this paper, we will introduce the notion of Jacobi fields and the corresponding Jacobi equation. More concretely, following the same lines as in the unconstrained case (see Table \ref{tab} for more details):
	\begin{enumerate}
		\item We have defined nonholonomic  Jacobi fields in terms of infinitesimal nonholonomic geodesic variations in Definition \ref{definition31}. We remark that, following our natural definition, a nonholonomic Jacobi field along a nonholonomic trajectory $c$ is not, in general, a section of the constraint distribution $D$ along $c$. This is an important difference with previous approaches to the notion of a nonholonomic Jacobi field.
		\item We have given in Theorem  \ref{jacobikilling} new   results to  explicitly find  Jacobi fields.  In particular,  {every Killing vector field $X\in {\mathfrak X}(Q)$ which is an infinitesimal symmetry of $D$ is a non-holonomic Jacobi field along any nonholonomic geodesic $c$ (see Corollary \ref{killingcorollary})}.
		\item We have characterized  Jacobi fields as trajectories of a  lifted nonholonomic system in Theorem \ref{MainTheorem}.
		\item Finally, we have derived  the nonholonomic Jacobi equation in terms of the curvature and torsion of the corresponding nonholonomic connection in Theorem \ref{Jacobi:equation}. 
	\end{enumerate}	
	To do all this, we widely use the theory of complete and vertical lifts to the tangent bundle (see \cite*{YaIs73}; see also \cite*{GrUr} for an extension of the theory to general algebroids).
	
	On the other hand, to preserve as much as possible the Riemannian geometric flavour we start our study with nonholonomic systems of kinematic type, but in Appendix \ref{appd} we extend the previous results to the case of a nonholonomic system where the Lagrangian is of the form $L=T-V$. 
	
		{\small
		\begin{table}[tbhp]
			\centering
			\begin{tabular}{|l|l|}
				\hline%
				{\bf \large Riemannian geometry}&
				{\bf \large Kinematic nonholonomic mechanics} \\
				\hline \hline
				&\\
				\parbox[t]{.47\linewidth}{A vector field $W:I\rightarrow TQ$ along a geodesic  $c:I\rightarrow Q$ is said to be a \textit{ Jacobi field} for the Riemannian manifold $(Q, g)$ if it is the infinitesimal variation vector field of a family of geodesics} & \parbox[t]{.47\linewidth}{ A vector field $W:I\rightarrow TQ$  along a nonholonomic trajectory  $c:I\rightarrow Q$ is said to be a \textit{nonholonomic Jacobi field} for the system $(L_g,\D)$ if it is the infinitesimal variation vector field of a family of nonholonomic trajectories  (see {\bf Definition \ref{definition31}})}\\
				\hline
				&
				\\
				\parbox[t]{.47\linewidth}{Every Killing vector field $W$ for the Riemannian metric $g$ is a Jacobi field along any geodesic}
				&
				\parbox[t]{.47\linewidth}{ Every Killing vector field $W$ for the Riemannian metric $g$ which is an  infinitesimal symmetry of $\D$  is a nonholonomic Jacobi field for any nonholonomic solution\\ (see {\bf Corollary \ref{killingcorollary}})}
				\\
				\hline
				&
				\\			\parbox[t]{.47\linewidth}{The trajectories of the Lagrangian system $L_{g^c}: TTQ\rightarrow \R$ are just the Jacobi fields for the Riemannian manifold $(Q, g)$}
				&
				\parbox[t]{.47\linewidth}{The trajectories of the nonholonomic system 
					$(L_{g^{c}},\D^{c})$ are just the Jacobi fields for the nonholonomic system determined by $(L_{g},\D)$\\ (see {\bf Theorem \ref{MainTheorem})}}\\
				\\
				\hline
				&
				\\
				\parbox[t]{.47\linewidth}{	$W$ is a Jacobi field if and only if 
					\[
					\frac{D^2 W}{dt^2}+R(W(t), \dot{c}(t))\dot{c}(t)=0
					\]
					or, equivalently, 
					\[\nabla^{g}_{\dot{c}}\nabla^{g}_{\dot{c}}W+R(W,\dot{c})\dot{c}=0
					\]}
				& \parbox[t]{.47\linewidth}{$W$ is a nonholonomic Jacobi field if and only if 
					\begin{eqnarray*}
						&&\nabla^{nh}_{\dot{c}}\nabla^{nh}_{\dot{c}}W+\nabla^{nh}_{\dot{c}}T^{nh}(W,\dot{c})\\
						&&+R^{nh}(W,\dot{c})\dot{c}=0, \quad \dot{W}(t)\in \D^{c}.
					\end{eqnarray*}
					(see {\bf Theorem  \ref{Jacobi:equation}})
				}
				\\
				\hline
				
			\end{tabular}
			\medskip
			\caption{Comparative between Jacobi fields for Riemannian geometry and kinematic nonholonomic systems }
			\label{tab}
		\end{table}
	}

	{The paper is organized as follows. In Section \ref{review-non-holonomic-systems}, we will review the main results on the geometric formulation of nonholonomic systems subjected to linear constraints (which will be used in the rest of the paper). We also extend some basic results on nonholonomic systems of kinetic type to the more general case when the kinetic energy is not induced by a Riemannian metric but a pseudo-Riemanninan metric. In Section \ref{non-holonomic-Jacobi-fields}, we introduce the notion of a nonholonomic Jacobi field for a kinematic nonholonomic system. Then, we prove that such systems may be lifted to kinematic nonholonomic systems (on the double tangent bundle) with kinetic energy induced by a pseudo-Riemanninan metric and that the nonholonomic trajectories of the lifted system are just the Jacobi fields of the original system. We also prove that the Jacobi fields satisfy the nonholonomic Jacobi equation which is given in terms of the covariant derivative, the curvature and the torsion associated with the nonholonomic connection. Along the paper, we have included several examples to illustrate the potential of our results.} 
	
	In addition, to make the paper self-contained and easily readable we have included four appendices also containing original results. The Appendix \ref{appa} introduces some well-know results about  complete and vertical lifts of vector fields and 1-forms. In  Appendix \ref{appb} we review some results about complete and vertical lifts in Riemannian and pseudo-Riemanninan geometry relating these lifted objects with the Poincar\'e-Cartan forms. In Appendix \ref{appc} we discuss the properties of the complete lift of a Lagrangian system of kinetic type. Finally in Appendix \ref{appd}, as we said before, we characterize Jacobi fields for nonholonomic mechanical systems where the Lagrangian is of the form $L=T-V$.

\section{Nonholonomic systems subjected to linear constraints}\label{review-non-holonomic-systems}

A \textit{Lagrangian mechanical system} is a pair formed by a smooth manifold $Q$ called the \textit{configuration space} and a smooth function $L:TQ\rightarrow\R$ on its tangent bundle called the \textit{Lagrangian} \citep*{AM78}. If the system is not subjected to any constraint or external forces, a \textit{motion} of the mechanical system is a solution of the \textit{Euler-Lagrange equations}, whose expression on natural coordinates relative to any chart $(q^{i})$ for $Q$ is 
\begin{equation} \label{EL}
\frac{d}{dt}\left(\frac{\partial L}{\partial \dot{q}^{i}}\right) - \frac{\partial L}{\partial q^{i}}=0, \quad 1\leqslant i \leqslant \text{dim}(Q). 
\end{equation}

We can also express these equations in a more geometric language \citep*{LR89}. Let $\tau_Q:TQ\rightarrow Q$ be the canonical tangent projection, $\Delta$ be the \textit{Liouville vector field} on $TQ$ defined by
\begin{equation*}
\Delta(u)=\left.\frac{d}{dt}\right|_{t=0}(u + t u)=(u)^{v}_{u},
\end{equation*}
and $S:TTQ\rightarrow TTQ$ be the \textit{vertical endomorphism} defined by
\begin{equation*}
	S(X_{u})=((T_{u} \tau_{Q}) (X_{u}))^{v}_{u},
\end{equation*}
where $(\cdot)^{v}_{u}:T_{q}Q\rightarrow T_{u}(T_{q}Q)$ denotes the \textit{vertical lift} at $u\in T_{q} Q$. These maps have a much more immediate interpretation when are written in coordinates where $$\Delta(v^{i}\frac{\partial}{\partial q^{i}})=v^{i}\frac{\partial}{\partial \dot{q}^{i}} \  \hbox{   and   }  \ S(X^{i}\frac{\partial}{\partial q^{i}}+X^{n+i}\frac{\partial}{\partial \dot{q}^{i}})=X^{i}\frac{\partial}{\partial \dot{q}^{i}}\; .$$

When the function $L$ is \textit{regular}, that is, the matrix $\text{Hess}(L):=(\frac{\partial^2 L}{\partial \dot{q}^i \partial \dot{q}^j})$ is non-singular, equations \eqref{EL} may be written as a system of second-order differential equations obtained by computing the integral curves of the unique vector field $\Gamma_L$ satisfying
\begin{equation}
i_{\Gamma_L}\omega_L=dE_L, \label{Lvf}
\end{equation}
where $\omega_L =-d(S^* dL)$ and {$E_L =\Delta (L)-L$} are the \textit{Poincaré-Cartan two-form} and the \textit{Lagrangian energy}, respectively. The one-form $\theta_{L}=S^* dL$ is the \textit{Poincaré-Cartan one-form}. The vector field $\Gamma_L$ is a SODE vector field on $Q$ and it is called the \textit{Lagrangian vector field}. Regularity of $L$ is equivalent to $\omega_L$ being symplectic and therefore to uniqueness of solution for equation \eqref{Lvf}. In effect, its local expression is
$$\omega_L=\frac{\partial^2 L}{\partial \dot{q}^i \partial q^j} d q^{i}\wedge d q^{j}+\frac{\partial^2 L}{\partial \dot{q}^i \partial \dot{q}^j} dq^{i}\wedge d\dot{q}^{j},$$
from where we deduce that $\omega_L$ is symplectic if and only if $\text{Hess}(L)$ is non-singular (for more details see \citep*{AM78,LR89}).

\subsection{Nonholonomic mechanics}

A \textit{nonholonomic constraint} on a regular mechanical system $(Q,L)$ is a nonintegrable distribution $\D$ on $Q$ and a \textit{nonholonomic mechanical system} is a triple $(Q,L,\D)$ such that the system is subjected to the constraint in the sense that the velocity vectors of motions belong to the distribution $\D$ \citep*{Bloch}.

Note that if the distribution was integrable, then the manifold $Q$ would be foliated by immersed submanifolds of $Q$ whose tangent space at each point coincides with the subspace given by the distribution at that point. Hence, motions of these systems would be confined to a submanifold $N\subseteq Q$. In this way, we obtain a mechanical system without constraints formed by $(N,L|_{N})$. This class of constraints is called \textit{holonomic constraints}.

Locally, the nonholonomic constraints are given by a set of $n-k$ equations that are linear on the velocities $$\mu^{a}_{i}(q)\dot{q}^{i}=0,$$ where $1\leqslant a\leqslant n-k$, where $k$ is the rank of the distribution $\D$ and $n$ is the dimension of the manifold $Q$. Geometrically, these equations define the vector subbundle $\D^{o}\subseteq T^{*}Q$, called the \textit{annihilator} of $\D$, spanned at each point by the one forms $\{\mu^{a}\}$ locally given by $\mu^{a}=\mu^{a}_{i}(q) dq^{i}$.

A motion of the nonholonomic mechanical system is a solution of the \textit{Lagrange-d'Alembert equations}, whose local expression is
\begin{equation} \label{LdA}
	\left\{\begin{array}{l}
		\displaystyle \frac{d}{dt}\left(\frac{\partial L}{\partial \dot{q}^{i}}\right) - \frac{\partial L}{\partial q^{i}}=\lambda_{a}\mu^{a}_{i}(q) \\ \mu^{a}_{i}(q)\dot{q}^{i}=0,
	\end{array}\right.
\end{equation}
for some Lagrange multipliers $\lambda_{a}$, which may be determined with the help of the constraint equations (see \citep*{VF72,LM94, LMdD1996, CeMaRa2001, CdLMdDM2003} for a first geometrical approach to nonholonomic mechanics).

Nonholonomic mechanics can also be described in a more geometric fashion. Consider the geometric equations
{\begin{equation}\label{nhLvf}
	\left\{\begin{array}{l}
		i_{\Gamma_{(L, D)}}\omega_L-dE_L\in \Gamma(F^o) \\
		\Gamma_{(L, \D)} \in {\frak X}(\D),
	\end{array}\right.
\end{equation}}
{where $\Gamma(F^0)$ is the space of sections of $F^0$, with} $F^{o}=S^*((T\D)^{o})$ the annihilator of a distribution $F$ on $TQ$ defined along $\D$. If the nonholonomic system is \textit{regular}, that is, the following conditions are satisfied (again see \citep*{LMdD1996}):
\begin{enumerate}
\item $\text{dim}(T_{v}\D)^{o}=\text{dim}(F_{v}^{o})$ (\textit{admissibility condition});
\item $T_{v}\D\cap (\sharp_{\omega_{L}})_{v}(F_{v}^{o})=\{0\}$ for all $v\in\D$ (\textit{compatibility condition}),
\end{enumerate}
then equations \eqref{nhLvf} have a SODE $\Gamma_{(L, \D)}$ as a unique solution on $\D$ and its integral curves satisfy equations \eqref{LdA}. Here, $\sharp_{\omega_{L}}:T^{*}(TQ)\rightarrow T(TQ)$ is the \textit{sharp isomorphism} and it is the inverse map of the \textit{flat isomorphism} defined by $\flat_{\omega_{L}}(X)=i_{X}\omega_{L}$.

The following theorem is a useful sufficient condition to prove that the nonholonomic system is regular (see \citep*{LMdD1996}).
\begin{theorem}
	If the Lagrangian $L$ has either a positive definite or a negative definite Hessian matrix $\text{Hess}(L)$, then the nonholonomic system is also regular.
\end{theorem}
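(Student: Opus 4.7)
The plan is to verify the two components of regularity separately: admissibility is automatic for any linear constraint distribution, while compatibility is exactly where the sign-definiteness of $\text{Hess}(L)$ enters.

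Admissibility reduces to a dimension count. In natural coordinates where $\D$ is cut out by the linear constraints $\mu^a_i(q)\dot q^i=0$ (with $a=1,\dots,n-k$, $n=\dim Q$ and $k=\mathrm{rank}\,\D$), the annihilator $(T_v\D)^o$ is spanned by the $n-k$ differentials $d(\mu^a_i\dot q^i)$. Applying $S^*$, which annihilates $dq^i$ and sends $d\dot q^i\mapsto dq^i$, yields $F^o_v=\mathrm{span}\{\mu^a_i\,dq^i\}=\mathrm{span}\{\tau_Q^*\mu^a\}$, again of dimension $n-k$. Hence $\dim(T_v\D)^o=\dim F^o_v$ for every $v\in\D$.

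For compatibility, I would take $Z\in T_v\D\cap \sharp_{\omega_L}(F^o_v)$ with $Z=\sharp_{\omega_L}(\alpha)$ and $\alpha=S^*\beta\in F^o_v$, and show $Z=0$ in two sub-steps. First, the identity $S^2=0$ gives
\[
\omega_L(Z,SY)=\alpha(SY)=\beta(S^2Y)=0
\]
for every $Y\in T_v(TQ)$. Taking $Y=\partial/\partial q^b$ and computing in coordinates yields $H_{ib}Z^i=0$, where $H_{ij}=\partial^2 L/\partial\dot q^i\partial\dot q^j$. The non-singularity of $\text{Hess}(L)$ already forces $Z^i=0$; i.e., $Z$ is vertical with components $Z^{n+i}$. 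Second, tangency of $Z$ to $\D$ at $v$ amounts to $\mu^a_i Z^{n+i}=0$, so the vector $W:=Z^{n+i}\partial/\partial q^i\in T_qQ$ in fact lies in $\D_q$. Pick $Y\in T_v\D$ with $T_v\tau_Q(Y)=W$, which exists because $T_v\tau_Q$ surjects onto $\D_q$. A direct coordinate calculation gives $\omega_L(Z,Y)=-H_{ij}W^iW^j=-\text{Hess}(L)(v)(W,W)$, while $\alpha(Y)=\lambda_a\mu^a_iW^i=0$ since $W\in\D_q$. Consequently $\text{Hess}(L)(v)(W,W)=0$.

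Sign-definiteness of $\text{Hess}(L)$ on all of $T_qQ$ restricts to a definite, hence non-degenerate, symmetric bilinear form on the subspace $\D_q$, so $W=0$ and therefore $Z=0$, establishing compatibility. The main delicate point is the reduction carried out above, namely that, under admissibility, compatibility is equivalent to the non-degeneracy of the restriction $\text{Hess}(L)(v)|_{\D_q\times\D_q}$; once this equivalence is isolated, the definiteness hypothesis closes the argument with no further work, since any definite quadratic form remains definite when restricted to a linear subspace.
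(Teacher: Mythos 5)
Your proof is correct, but it takes a genuinely different route from the one the paper relies on. The paper never proves this theorem itself: it is quoted from \citep*{LMdD1996}, and the machinery developed immediately after it indicates the intended argument, namely that the compatibility condition is equivalent to the invertibility of the matrix $\mathcal{C}^{ab}=Z^{a}(\widehat{\mu}^{b})$, whose local expression is $C^{ab}=\mu^{a}_{i}W^{ij}\mu^{b}_{j}$ with $W^{ij}$ the inverse of $\text{Hess}(L)$. On that route the definiteness hypothesis enters in one line: $W^{ij}$ is definite whenever $\text{Hess}(L)$ is, so $C^{ab}$ is a Gram-type matrix of the linearly independent covectors $\mu^{a}$ with respect to a definite form, hence invertible. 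You instead verify compatibility directly from its definition: any $Z\in T_{v}\D\cap\sharp_{\omega_{L}}(F^{o}_{v})$ is forced to be vertical (via $S^{2}=0$ and non-singularity of the Hessian), its components define a vector $W\in\D_{q}$ (via tangency to $\D$), and pairing $Z$ against a tangent vector of $T_{v}\D$ projecting onto $W$ gives $\text{Hess}(L)(W,W)=0$, which definiteness kills; your admissibility count and all the coordinate evaluations of $\omega_{L}$ are correct. What each approach buys: yours is self-contained, since it does not invoke the equivalence between compatibility and invertibility of $\mathcal{C}$ (itself a lemma of \citep*{LMdD1996}), and it is essentially the coordinate analogue of the paper's own intrinsic proof of Theorem \ref{regularityth} for kinetic Lagrangians; the paper's route is shorter given that lemma and produces the practical criterion $C^{ab}$ that the paper later uses in examples. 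Two minor points: your closing claim that compatibility is \emph{equivalent} to non-degeneracy of $\text{Hess}(L)(v)|_{\D_{q}\times\D_{q}}$ overstates what you showed (you prove, and need, only one implication); and if in the last step you pair $Z$ against lifts of arbitrary $U\in\D_{q}$ rather than only of $W$, you obtain $\text{Hess}(L)(W,U)=0$ for all $U\in\D_{q}$, which upgrades your argument to the sharper statement that a non-singular Hessian whose restriction to $\D$ is non-degenerate already yields regularity --- the exact analogue of Theorem \ref{regularityth}.
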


To each of the one-forms $\mu^{a}$ associate the fiberwise linear function $\widehat{\mu}^{a}:TQ\rightarrow \R$ defined by $\widehat{\mu}^{a}(v_q)=\langle \mu^{a}(q),v_q \rangle$, for $v_q\in T_q Q$.
In local coordinates, equation \eqref{nhLvf} may be written like
\begin{equation*}
	i_{\Gamma_{(L, \D)}}\omega_L-dE_L=\lambda_{a} S^{*}(d\widehat{\mu}^{a})=\lambda_{a} \mu^{a}_{i} dq^{i},
\end{equation*}
for some Lagrange multipliers $\lambda_{a}$. Therefore, a solution $\Gamma_{(L, \D)}$ of \eqref{nhLvf} is of the form $\Gamma_{(L, \D)}=\Gamma_{L}+\lambda_{a} Z^{a}$, where $Z^{a}=\sharp_{\omega_{L}}(\mu^{a}_{i} dq^{i})$. The Lagrange multipliers may be computed by imposing the tangency condition in \eqref{nhLvf}, which is equivalent to
\begin{equation*}
	0=\Gamma_{(L, \D)}(\widehat{\mu}^{b})=\Gamma_{L}(\widehat{\mu}^{b})+\lambda_{a} Z^{a}(\widehat{\mu}^{b}), \ \ \text{for} \ b=1,...,n-k.
\end{equation*}
This equation has a unique solution for the Lagrange' multipliers if and only if the matrix $\mathcal{C}^{a b}=Z^{a}(\widehat{\mu}^{b})$ is invertible at all points of $\D$. In fact, in \citep*{LMdD1996}, the authors prove the last condition is equivalent to the compatibility condition. In local coordinates, one finds that
\begin{equation}\label{matrixC}
	C^{a b}=\mu_{i}^{a}W^{i j}\mu_{j}^{b},
\end{equation}
where $W^{i j}$ is the inverse matrix of $W_{i j}=\text{Hess}(L)$.

Recall from symplectic geometry that $F^{\bot}=\sharp_{\omega_{L}}(F^{o})$ for any distribution $F$, where $\bot$ denotes the symplectic orthogonal relative to $\omega_{L}$. Hence, the compatibility condition also implies the Whitney sum decomposition $$T(TQ)|_{\D}=T\D\oplus F^{\bot},$$
to which we may associate two complementary projectors $\bar{P}:T(TQ)|_{\D}\rightarrow T\D$ and $\bar{P}':T(TQ)|_{\D}\rightarrow F^{\bot}$ with coordinate expressions
\begin{equation*}
	\bar{P}(X)=X-\mathcal{C}_{a b}\widehat{\mu}^{b}(X)Z^{a}, \quad \bar{P}'(X)=\mathcal{C}_{a b}\widehat{\mu}^{b}(X)Z^{a}.
\end{equation*}

\begin{proposition}{\citep*{LMdD1996}}
	The nonholonomic dynamics is given by $$\Gamma_{(L, \D)}=\bar{P}( \left.\Gamma_{L}\right|_{\D} ).$$
\end{proposition}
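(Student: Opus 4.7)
The plan is to verify directly that $\bar{P}(\Gamma_L|_\D)$ satisfies both conditions in (\ref{nhLvf}) and then invoke uniqueness under the regularity hypothesis to conclude equality with $\Gamma_{(L,\D)}$.

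First, tangency to $\D$ is immediate from the very definition of the projector $\bar{P}$, which takes values in $T\D$; this yields the second condition of (\ref{nhLvf}) for free. For the d'Alembert-type condition, I would use the decomposition $\Gamma_L|_\D = \bar{P}(\Gamma_L|_\D) + \bar{P}'(\Gamma_L|_\D)$ together with the fact that $\Gamma_L$ satisfies (\ref{Lvf}) to compute
\[
i_{\bar{P}(\Gamma_L|_\D)}\omega_L - dE_L = (i_{\Gamma_L}\omega_L - dE_L)|_\D - i_{\bar{P}'(\Gamma_L|_\D)}\omega_L = -\,i_{\bar{P}'(\Gamma_L|_\D)}\omega_L.
\]
The right-hand side will lie in $F^o$: indeed $\bar{P}'$ takes values in $F^\perp = \sharp_{\omega_L}(F^o)$, so writing $\bar{P}'(\Gamma_L|_\D) = \sharp_{\omega_L}(\alpha)$ for some $\alpha \in F^o$ and applying $\flat_{\omega_L}$ gives back $\alpha \in F^o$.

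For uniqueness, I would argue as follows: if $X, Y$ are two solutions of (\ref{nhLvf}), then $i_{X-Y}\omega_L \in F^o$ forces $X-Y \in \sharp_{\omega_L}(F^o) = F^\perp$, while tangency gives $X-Y \in T\D$; the compatibility condition then kills the intersection $T\D \cap F^\perp = \{0\}$, yielding $X=Y$. Combined with the previous verification, this gives $\Gamma_{(L,\D)} = \bar{P}(\Gamma_L|_\D)$. The only mildly delicate step is the identification $i_{\bar{P}'(\Gamma_L|_\D)}\omega_L \in F^o$, which rests on the symplectic duality $F^\perp = \sharp_{\omega_L}(F^o)$ and the bijectivity of $\flat_{\omega_L}$; once that is in hand the whole argument is essentially a one-line calculation, so I do not anticipate a serious obstacle.
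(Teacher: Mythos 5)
Your proposal is correct, and it takes a genuinely different route from the paper's. The paper argues through the Lagrange multiplier representation: it starts from the known local form $\Gamma_{(L,\D)}=\Gamma_{L}+\lambda_{a}Z^{a}$ with $Z^{a}=\sharp_{\omega_{L}}(\mu^{a}_{i}dq^{i})$, imposes the tangency condition $\Gamma_{(L,\D)}(\widehat{\mu}^{b})=0$ to solve $\lambda_{a}=-\mathcal{C}_{ab}\Gamma_{L}(\widehat{\mu}^{b})$, and then recognizes the resulting expression as the coordinate formula $\bar{P}(X)=X-\mathcal{C}_{ab}\widehat{\mu}^{b}(X)Z^{a}$ applied to $\Gamma_{L}|_{\D}$. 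You instead verify intrinsically that the candidate $\bar{P}(\Gamma_{L}|_{\D})$ satisfies both conditions of \eqref{nhLvf} — tangency is built into $\bar{P}$, and the d'Alembert condition follows from $i_{\Gamma_{L}}\omega_{L}=dE_{L}$ together with $\flat_{\omega_{L}}(F^{\bot})=F^{o}$ — and then close with uniqueness via $T\D\cap F^{\bot}=\{0\}$, which is exactly the compatibility condition. Your computation is sound: the difference of two solutions of \eqref{nhLvf} lies in $F^{\bot}$ because $F^{o}$ is fiberwise a vector space, and it lies in $T\D$ by tangency, so it vanishes. What the paper's approach buys is the explicit value of the constraint forces (the multipliers $\lambda_{a}$), which is useful computational information in examples; what your approach buys is a coordinate-free argument that never chooses a local basis $\{\mu^{a}\}$ of $\D^{o}$ and makes transparent that the whole statement is nothing but the uniqueness of the splitting $T(TQ)|_{\D}=T\D\oplus F^{\bot}$. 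Note only that the paper already asserts, under regularity, existence and uniqueness of the solution of \eqref{nhLvf}, so your final uniqueness paragraph re-derives an available fact — harmless, and it makes your argument self-contained.
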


Indeed, under all the assumptions we have considered so far, we can compute the Lagrange multipliers to be $$\lambda_{a}=-\mathcal{C}_{a b}\Gamma_{L}(\Phi^{b}),$$ from where the result follows.

There are several typical examples of nonholonomic systems. Some of them are the nonholonomic particle, the vertical rolling disk, the knife edge, the Chaplygin sleigh or the rolling ball on the table (cf. \citep*{Bloch}).

\begin{example}\label{Nhparticle}
	We will introduce here an example of a simple nonholonomic system to which we will get back all along the text: the \textit{nonholonomic particle}. Consider a mechanical system in the configuration manifold $Q=\R^3$ defined by the Lagrangian
	$$L_{g}(x,y,z,\dot{x},\dot{y},\dot{z})=\frac{1}{2}(\dot{x}^2 +\dot{y}^2 +\dot{z}^2)$$
	and subjected to the nonholonomic constraint $\dot{z}-y \dot{x}=0$. The one-form $\mu=dz-y \ dx$ spans the vector subbundle $\D^{o}$, which is the annihilator of the distribution $$\D=\text{span}\left\{\frac{\partial}{\partial x}+y \frac{\partial}{\partial z}, \frac{\partial}{\partial y} \right\}.$$
	Then the equations of motion of this system are given by Lagrange-d'Alembert equations \eqref{LdA}, which in this case hold
	\begin{equation}
		\begin{cases}
			\ddot{x}=- \lambda y \\
			\ddot{y}=0 \\
			\ddot{z}=\lambda \\
			\dot{z}-y\dot{x}=0
		\end{cases}
		\quad
		\Rightarrow
		\quad
		\begin{cases}
			\ddot{x}=-y\frac{\dot{x}\dot{y}}{1+y^2} \\
			\ddot{y}=0 \\
			\ddot{z}=\frac{\dot{x}\dot{y}}{1+y^2} \\
			\dot{z}-y\dot{x}=0,
		\end{cases}
	\end{equation}
	where the value of $\lambda$ is computed with the help of the constraints. This is in fact possible because the nonholonomic system is regular. This can be immediately seen from the fact that $\text{Hess}(L)$ is positive definite.
\end{example}

\subsection{{Kinematic nonholonomic systems with kinetic energy induced by a pseudo-Riemanninan metric}}

In \citep*{Lewis98}, the author expresses the dynamics of a purely kinetic Lagrangian system with nonholonomic constraints as the geodesic equation of a non-Levi-Civita connection on $Q$ {(see also \citep*{Synge28})}. This connection, which we will denote by $\nabla^{nh}$, will help us define a \textit{nonholonomic Jacobi equation}. It is defined as
\begin{equation}\label{nhconnection}
\nabla^{nh}_{X} Y:=P(\nabla_{X}^{g} Y)+\nabla^{g}_{X}[P'(Y)],
\end{equation}
where $g$ is a Riemannian metric, $\nabla^{g}$ its Levi-Civita connection, $P:TQ\rightarrow \D$ is the associated  orthogonal projector onto the distribution $\D$ and $P':TQ\rightarrow\D^{\bot}$ is the orthogonal projector onto $\D^{\bot}$, the orthogonal distribution.

This connection is not symmetric in general neither it is compatible with the metric. Nevertheless, it satisfies the more restricted condition of compatibility with the metric over sections of $\D$ (see \citep*{Lewis98}), i.e.,
\begin{equation}\label{Dcompatibility}
	X(g(Y,Z))=g(\nabla^{nh}_{X} Y,Z)+g(Y,\nabla^{nh}_{X} Z), \quad \forall X,Y,Z\in\Gamma(\D).
\end{equation}

An other important property that we will use is that if $Y\in\Gamma(\D)$, then $\nabla^{nh}_{X} Y=P(\nabla_{X}^{g} Y)$ for any vector field $X\in\mathfrak{X}(Q)$.

Let $h$ be a non-degenerate symmetric $(0,2)$-tensor on $Q$, that is, $h$ is a pseudo-Riemannian metric on $Q$. Then the Levi-Civita connection of $h$ is well-defined and, moreover, in the presence of a distribution $\D$ on $Q$ the orthogonal projectors $P:TQ\rightarrow \D$ and $P':TQ\rightarrow\D^{\bot}$ are well-defined if and only if $\D\cap\D^{\bot}=\{0\}$. In the sequel, we will need  to extend the definition of the connection \eqref{nhconnection} to these cases.

If $h$ is a pseudo-Riemannian metric on $Q$, we will denote by $L_{h}:TQ\rightarrow\R$ the Lagrangian function associated with $h$ defined by
\begin{equation*}
	L_{h}(v)=\frac{1}{2}h(v,v), \quad v\in TQ.
\end{equation*}

Note that, using the fact that $h$ is non-degenerate, we deduce that the Poincaré-Cartan 2-form $\omega_{L_{h}}=-d\theta_{L_{h}}$ is symplectic (see Appendix B).

We will show first a useful property, relating the symplectic and the metric structures, that we will need to prove the next theorem.
\begin{lemma}\label{omegametriclemma}
	Let $h$ be a pseudo-Riemannian metric on $Q$, $L_{h}$ the associated kinetic Lagrangian and $\omega_{L_{h}}$ the associated symplectic form on $TQ$. Denote by $\sharp_{h}:T^{*}Q\rightarrow TQ$ and $\sharp_{\omega_{L_{h}}}:T^{*}TQ\rightarrow TTQ$ the musical isomorphisms with respect to the metric and symplectic form, respectively. Then for any $\alpha\in\Omega^{1}(Q)$ we have
	\begin{equation*}
	\sharp_{\omega_{L_{h}}}\circ \alpha^{v}=-(\sharp_{h}\circ \alpha)^{v},
	\end{equation*}
	where $\alpha^{v}\in\Omega^{1}(TQ)$ and $(\sharp_{h}\circ \alpha)^{v}\in \mathfrak{X}(TQ)$ are the vertical lifts to $TQ$ of the 1-form $\alpha$ and the vector field $\sharp_{h}\circ \alpha$, respectively (see Appendix A).
\end{lemma}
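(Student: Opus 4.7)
My plan is to verify the identity by direct contraction: I will show that the vector field $V := -(\sharp_h\circ\alpha)^v$ satisfies $i_V\omega_{L_h}=\alpha^v$, and then use that $\omega_{L_h}$ is symplectic (guaranteed by the non-degeneracy of $h$, cf.\ Appendix B) to conclude $\sharp_{\omega_{L_h}}(\alpha^v) = V$.

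The key intermediate step is the pointwise identity
\begin{equation*}
\omega_{L_h}(X^v_u, Z) = -h(X, T_u\tau_Q(Z)), \qquad X\in T_{\tau_Q(u)}Q,\ Z\in T_u(TQ).
\end{equation*}
To derive this I start from the coordinate-free description $\theta_{L_h} = S^*dL_h$, combine it with $S(Z) = (T_u\tau_Q(Z))^v_u$ and the elementary $dL_h(W^v_u) = h(u,W)$, and obtain the compact formula $\theta_{L_h}|_u(Z) = h(u, T_u\tau_Q(Z))$. Differentiating via the Cartan formula, and using the bracket identities $[X^v,Y^v]=0$, $[X^v,Y^c]=[X,Y]^v$ together with $T\tau_Q\circ Y^c = Y$, I get $\omega_{L_h}(X^v, Y^c) = -h(X,Y)$ and $\omega_{L_h}(X^v, Y^v) = 0$. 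Since complete and vertical lifts of vector fields on $Q$ span $T(TQ)$ pointwise, these two equalities are equivalent to the displayed formula.

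With the formula in hand, the lemma is essentially one line: setting $X = \sharp_h(\alpha_{\tau_Q(u)})$ and using both $h(\sharp_h\alpha, Y) = \langle\alpha, Y\rangle$ and the Yano--Ishihara convention $\alpha^v = \tau_Q^*\alpha$, I compute
\begin{equation*}
\omega_{L_h}(V_u, Z) = h(\sharp_h\alpha, T_u\tau_Q(Z)) = \langle \alpha, T_u\tau_Q(Z)\rangle = \alpha^v_u(Z),
\end{equation*}
which is what is required. I do not anticipate any real obstacle; the same result can equally well be obtained by a short coordinate computation starting from $\omega_{L_h} = -\partial_k h_{ij}\dot q^j\, dq^k\wedge dq^i + h_{ij}\, dq^i\wedge d\dot q^j$ and solving $i_V\omega_{L_h} = \alpha_i\, dq^i$, which immediately gives $V = -h^{ij}\alpha_j\,\partial/\partial\dot q^i$. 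The only points demanding care are the sign convention $\omega_L = -d\theta_L$ and the fact that the vertical lift of a 1-form is its pullback along $\tau_Q$ (in contrast to the vertical lift of a vector field), both of which are fixed in the preceding sections and Appendix A.
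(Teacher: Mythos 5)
Your proof is correct and is in substance the paper's own argument: the paper disposes of this lemma by citing the first relation of \eqref{omegalift}, $\flat_{\omega_{L_{h}}}(X^{v})=-(\flat_{h}(X))^{v}$, and the Appendix B proof of that relation is exactly the computation you carry out --- the value of $\theta_{L_{h}}$ on complete and vertical lifts, Cartan's formula for $-d\theta_{L_{h}}$, and the bracket identities \eqref{liebralift} --- which you then specialize to $X=\sharp_{h}\circ\alpha$ and invert using non-degeneracy of $\omega_{L_{h}}$. The only difference is presentational: you re-derive that relation in a self-contained, pointwise form (via $\theta_{L_{h}}|_{u}(Z)=h(u,T_{u}\tau_{Q}(Z))$) instead of citing it.
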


\begin{proof}
	It follows using the first relation in \eqref{omegalift} (see Appendix B).
\end{proof}

Now we will see that a distribution is non-degenerate with respect to a pseudo-Riemannian metric if and only if the induced nonholonomic system is regular.

\begin{theorem}\label{regularityth}
	Given a pseudo-Riemannian metric $h$ on a manifold $Q$ and a distribution $\D$ the following are equivalent:
	\begin{enumerate}
		\item $\D \cap \D^{\bot}=\{0\}$, where $\D^{\bot}$ is the orthogonal distribution with respect to $h$;
		\item The nonholonomic system $(L_{h},\D)$ is regular.
	\end{enumerate}
\end{theorem}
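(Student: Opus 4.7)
The plan is to verify the two defining conditions of regularity for $(L_{h},\D)$---admissibility and compatibility---directly, using the structural description of $F^{o}$ provided by the vertical endomorphism and then invoking Lemma \ref{omegametriclemma} to convert the symplectic intersection condition into a purely metric condition on $\D$.

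First I would settle admissibility. In natural bundle coordinates the constraint functions defining $\D\subset TQ$ are $\widehat{\mu}^{a}=\mu^{a}_{i}\dot{q}^{i}$, so $(T\D)^{o}$ has rank $n-k$ and is locally spanned by the differentials $d\widehat{\mu}^{a}$. Applying $S^{*}$ yields $S^{*}(d\widehat{\mu}^{a})=\mu^{a}_{i}\,dq^{i}$, which is the vertical lift $(\mu^{a})^{v}$ of the constraint $1$-form to $TQ$. Because the $\mu^{a}$ are pointwise independent on $Q$, so are their vertical lifts along $\D$; hence $\dim F^{o}_{v}=n-k=\dim(T_{v}\D)^{o}$, so admissibility always holds for constraints of linear type, independently of the metric.

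Next I would tackle compatibility. By Lemma \ref{omegametriclemma}, $\sharp_{\omega_{L_{h}}}((\mu^{a})^{v})=-(\sharp_{h}\mu^{a})^{v}$, and since the vectors $\sharp_{h}\mu^{a}$ span the $h$-orthogonal distribution $\D^{\bot}$, the subspace $\sharp_{\omega_{L_{h}}}(F^{o}_{v})\subset T_{v}(TQ)$ coincides with the vertical lift to $v$ of $\D^{\bot}_{\tau_{Q}(v)}$. On the other hand, because $\D$ is a vector subbundle of $TQ$, the vertical part of $T_{v}\D$ is precisely the vertical lift of $\D_{\tau_{Q}(v)}$. Since $\sharp_{\omega_{L_{h}}}(F^{o}_{v})$ is itself purely vertical, the intersection $T_{v}\D\cap \sharp_{\omega_{L_{h}}}(F^{o}_{v})$ is the vertical lift of $\D_{\tau_{Q}(v)}\cap\D^{\bot}_{\tau_{Q}(v)}$ to $T_{v}(TQ)$. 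Consequently, the compatibility condition holds at every $v\in\D$ if and only if $\D\cap\D^{\bot}=\{0\}$ pointwise on $Q$, which is exactly condition (1).

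The main technical point is to correctly identify $\sharp_{\omega_{L_{h}}}(F^{o}_{v})$ as a vertical subspace naturally attached to $\D^{\bot}$; this is where Lemma \ref{omegametriclemma} does the heavy lifting. As a sanity check one can compare with the local expression \eqref{matrixC}, which for $L=L_{h}$ reads $\mathcal{C}^{ab}=\mu^{a}_{i}h^{ij}\mu^{b}_{j}=h^{*}(\mu^{a},\mu^{b})$; its invertibility is the non-degeneracy of the cometric $h^{*}$ on $\D^{o}$, equivalent via $\sharp_{h}$ to the non-degeneracy of $h|_{\D^{\bot}}$, and hence, by the standard linear-algebra identity $\mathrm{rad}(h|_{\D})=\D\cap\D^{\bot}$, to $\D\cap\D^{\bot}=\{0\}$.
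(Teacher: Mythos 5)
Your proposal is correct and follows essentially the same route as the paper's own proof: both hinge on Lemma \ref{omegametriclemma} to identify $\sharp_{\omega_{L_{h}}}(F^{o}_{v})$ with the vertical lift of $\D^{\bot}_{\tau_{Q}(v)}$, and on the fact that the vertical vectors tangent to the subbundle $\D$ are exactly the vertical lifts of elements of $\D$, so that the compatibility condition reduces to $\D\cap\D^{\bot}=\{0\}$. The only difference is cosmetic: you argue at the level of subspaces (getting both implications at once) and you verify the admissibility condition explicitly, which the paper leaves implicit.
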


\begin{proof}
	Suppose first that $\D \cap \D^{\bot}=\{0\}$ and take $X_{u}\in T_{u}\D \cap F_{u}$, where $F$ is the distribution along $\D$ defined by
	\begin{equation*}
	F =\sharp_{\omega_{L_{h}}}( S^{*}(T\D^{o}))=\sharp_{\omega_{L_{h}}}(\tau_{Q}^{*}\D^{o}).
	\end{equation*}
	Since $X_{u}\in F_{u}$, then there exists $\alpha\in \D^{o}$ such that
	\begin{equation*}
	X_{u}=\sharp_{\omega_{L_{h}}}(\alpha^{v}_{u})=-(\sharp_{h}(\alpha))^{v}_{u}\in T_{u}\D,
	\end{equation*}
	where the last equation follows from Lemma \ref{omegametriclemma}.
	
	Therefore, $\sharp_{h}(\alpha)$ is in $\D$, but since $\sharp_{h}(\D^{o})=\D^{\bot}$, it must be the zero vector. Hence $X_{u}=-(\sharp_{h}(\alpha))^{v}_{u}=0$.
	
	Conversely, if $u\in \D \cap \D^{\bot}$, there exists $\alpha\in \D^{o}$ such that $u=\sharp_{h}(\alpha)$. Since the vector
	\begin{equation*}
	(\sharp_{h}(\alpha))^{v}_{u}\in T_{u}\D \cap F_{u}=\{0\},
	\end{equation*}
	then $\sharp_{h}(\alpha)=0$ or, in other words, $u=0$.
\end{proof}

By the theorem above, if the distribution $\D$ is non-degenerate with respect to $h$ then the nonholonomic system $(L_{h},\D)$ is regular and we can consider the nonholonomic SODE $\Gamma_{(L_{h},\D)}$ on $\D$. We will see now that the trajectories of $\Gamma_{(L_{h},\D)}$ are just the geodesics of the associated nonholonomic connection $\nabla^{nh}$ with initial velocities in $\D$.

\begin{theorem}\label{nhgeodesicth}
	Let $h$ be a pseudo-Riemannian metric and $L_{h}:TQ\rightarrow\R$ its associated Lagrangian. If $\D$ is a nonholonomic distribution satisfying $\D\cap\D^{\bot}=\{0\}$, then the base integral curves of $\Gamma_{(L_{h},\D)}$ are the geodesics of the connection $\nabla^{nh}$ with initial velocities in $\D$.
\end{theorem}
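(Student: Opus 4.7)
The plan is to translate the Lagrange-d'Alembert equations for $L_h$ into a covariant condition involving $\nabla^g$, verify that this condition matches the geodesic equation for $\nabla^{nh}$ on curves tangent to $\D$, and then close the converse direction by uniqueness of solutions of second-order ODEs. To begin, a direct calculation from $L_h = \tfrac{1}{2} h_{ij}\dot q^i \dot q^j$ shows that the Euler-Lagrange operator equals $h_{ij}(\ddot q^j + \Gamma^j_{kl}\dot q^k \dot q^l)$, where the $\Gamma^j_{kl}$ are the Christoffel symbols of $\nabla^g$. Hence the Lagrange-d'Alembert equations \eqref{LdA} for $(L_h,\D)$ are equivalent to the conjunction of $\dot c(t)\in\D_{c(t)}$ and $h(\nabla^g_{\dot c}\dot c,\cdot)\in\D^o$; the latter is in turn equivalent to $\nabla^g_{\dot c}\dot c\in\D^\bot$, since $\sharp_h(\D^o)=\D^\bot$.

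With this reformulation in hand, I would substitute into the definition \eqref{nhconnection}:
\[
\nabla^{nh}_{\dot c}\dot c \;=\; P(\nabla^g_{\dot c}\dot c) + \nabla^g_{\dot c}\bigl(P'(\dot c)\bigr).
\]
Tangency $\dot c\in\D$ gives $P'(\dot c)=0$, killing the second term, while $\nabla^g_{\dot c}\dot c\in\D^\bot=\ker P$ kills the first. So every base integral curve of $\Gamma_{(L_h,\D)}$ is a geodesic of $\nabla^{nh}$ whose initial velocity lies in $\D$.

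For the converse, let $c$ be a geodesic of $\nabla^{nh}$ with $\dot c(0)\in\D_{c(0)}$. By Theorem \ref{regularityth}, the nonholonomic system $(L_h,\D)$ is regular, so there is a unique integral curve $\tilde c$ of $\Gamma_{(L_h,\D)}$ with the same initial data $(c(0),\dot c(0))$; by the previous paragraph $\tilde c$ also satisfies $\nabla^{nh}_{\dot{\tilde c}}\dot{\tilde c}=0$. Since the geodesic equation of $\nabla^{nh}$ is a second-order ODE on $Q$ with a unique solution for each initial condition, $c=\tilde c$, exhibiting $c$ as a base integral curve of $\Gamma_{(L_h,\D)}$.

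The only step that genuinely departs from the Riemannian version of the argument is the bookkeeping needed to ensure everything is well-defined in the pseudo-Riemannian setting: the projectors $P,P'$, and hence $\nabla^{nh}$, require $\D\cap\D^\bot=\{0\}$, while the nonholonomic SODE $\Gamma_{(L_h,\D)}$ requires regularity of $(L_h,\D)$. Theorem \ref{regularityth} supplies precisely the equivalence of these two conditions, after which the identification above is a purely algebraic manipulation.
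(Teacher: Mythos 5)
Your proposal is correct, but it follows a genuinely different route from the paper. The paper's proof is intrinsic: it pairs the defining symplectic equation for $\Gamma_{(L_h,\D)}$ with the complete lift $X^{c}$ of an arbitrary section $X\in\Gamma(\D)$, then invokes the lift calculus (Lemma \ref{cvlift}, equation \eqref{omegalift}, and the identity $L_{\Le_{X}h}=2L_{(\nabla^{h}X)}$) to arrive at $\tfrac{d}{dt}\bigl(h(X\circ c_{v},\dot{c}_{v})\bigr)=h(\nabla^{h}_{\dot{c}_{v}}X,\dot{c}_{v})$, after which metric compatibility yields $h(X\circ c_{v},\nabla^{h}_{\dot{c}_{v}}\dot{c}_{v})=0$ for every section $X$ of $\D$, hence $P(\nabla^{h}_{\dot{c}_{v}}\dot{c}_{v})=0$. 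You instead work in coordinates: you identify the Euler--Lagrange operator of $L_h$ with the covariant acceleration $h_{ij}(\ddot q^{j}+\Gamma^{j}_{kl}\dot q^{k}\dot q^{l})$, translate the Lagrange-d'Alembert equations \eqref{LdA} into the covariant statement $\dot c\in\D$, $\nabla^{h}_{\dot c}\dot c\in\D^{\bot}$ (using $\sharp_{h}(\D^{o})=\D^{\bot}$), and finish by projector algebra; this is legitimate because the paper's Section \ref{review-non-holonomic-systems} guarantees that integral curves of the nonholonomic SODE of a regular system satisfy \eqref{LdA}, and Theorem \ref{regularityth} supplies regularity. Your version is shorter and more elementary, and it has the added merit of explicitly proving the reverse inclusion (every $\nabla^{nh}$-geodesic with initial velocity in $\D$ is a base integral curve, via uniqueness of solutions of second-order ODEs), which the paper's proof leaves implicit; what the paper's longer argument buys is precisely the complete/vertical-lift machinery that is reused in the subsequent results (Theorem \ref{Gammanh} and Theorem \ref{MainTheorem}). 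One cosmetic remark: since the metric in the statement is $h$, you should write $\nabla^{h}$ rather than $\nabla^{g}$ for its Levi-Civita connection, and note (as the paper does just before Lemma \ref{omegametriclemma}) that the definition \eqref{nhconnection} of $\nabla^{nh}$ extends to the pseudo-Riemannian setting exactly when $\D\cap\D^{\bot}=\{0\}$, which is the hypothesis you are given.
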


\begin{proof}
	Let $c_{v}:I\rightarrow Q$ be a trajectory of $\Gamma_{(L_{h},\D)}$ with initial velocity $\dot{c}_{v}(0)=v\in\D$. We must prove that $$\nabla^{nh}_{\dot{c_{v}}}\dot{c_{v}}=0.$$
	
	Given any $X\in\Gamma(\D)$, we will apply the geometric equation which defines $\Gamma_{(L_{h},\D)}$ {to the complete lift $X^{c}$ of $X$ (see Appendix A)} at points in $\D$. In fact, given $u\in\D$ and $\mu\in (TD)^{o}$
	\begin{equation*}
	\langle i_{\Gamma_{(L_{h},\D)}}\omega_{L_{h}}-dL_{h}(u), X^{c}(u) \rangle =\langle S^{*}(\mu) (u), X^{c}(u) \rangle.
	\end{equation*}
	Using the skew-symmetry of $\omega_{L_{h}}$ and the fact that $SX^{c}=X^{v}$ (see Appendix A) we get
	\begin{equation*}
	-\langle i_{X^{c}}\omega_{L_{h}},\Gamma_{(L_{h},\D)}\rangle -X^{c}(L_{h}) = \langle \mu, X^{v} \rangle.
	\end{equation*}
	Note that the right-hand side vanishes because $X^{v}\in\mathfrak{X}(\D)$ and $\mu\in (TD)^{o}$. Also, using Lemma \ref{cvlift} and equation \eqref{omegalift} from Appendix B on the left-hand side of the previous equations we deduce
	\begin{equation*}
	-d(\widehat{\flat_{h}(X)})(\Gamma_{(L_{h},\D)})+2\theta_{L_{(\nabla^{h} X)}}(\Gamma_{(L_{h},\D)})-L_{\Le_{X}h} = 0,
	\end{equation*}
	where $(\nabla^{h} X)$ is the $(0,2)$-tensor field defined by
	\begin{equation*}
		(\nabla^{h} X)(Y,Z)=h(\nabla^{h}_{X}Y,Z),
	\end{equation*}
	$\nabla^{h}$ the Levi-Civita connection associated with $h$ and $\widehat{\flat_{h}(X)}$ is the fiberwise linear function on $TQ$ induced by the 1-form $\flat_{h}(X)$.
	
	By the observation following Lemma \ref{cvlift}, we deduce that $L_{\Le_{X} h}=2L_{(\nabla^{h} X)}$. Moreover, since the vector field $\Gamma_{(L_{h},\D)}$ is a SODE along $\D$, it follows that $S\Gamma_{(L_{h},\D)}=\Delta|_{\D}$, with $\Delta$ being the Liouville vector field on $TQ$ and
	\begin{equation*}
	\theta_{L_{(\nabla^{h} X)}}(\Gamma_{(L_{h},\D)})=\Delta(L_{(\nabla^{h} X)})|_{\D}=2L_{(\nabla^{h} X)}|_{\D}.
	\end{equation*}
	So the equation boils down to
	\begin{equation*}
	\Gamma_{(L_{h},\D)}(u)(\widehat{\flat_{h}(X)})=2L_{\nabla_{X}}(u)=h(\nabla^{h}_u X, u).
	\end{equation*}
	Evaluating the last equation over the curve $\dot{c}_{v}$ and noting that $\Gamma_{(L_{h},\D)}(\dot{c}_{v})$ is just $\ddot{c}_{v}$, we deduce
	\begin{equation*}
	\ddot{c}_{v}\left( \widehat{\flat_{h}(X)} \right)=	h(\nabla^{h}_{\dot{c}_{v}}X,\dot{c}_{v}).
	\end{equation*}
	Then, of course,
	\begin{equation*}
	\frac{d}{dt}\left( \widehat{\flat_{h}(X)}(\dot{c}_{v}) \right)=	h(\nabla^{h}_{\dot{c}_{v}}X,\dot{c}_{v}),
	\end{equation*}
	which is by definition
	\begin{equation*}
	\frac{d}{dt}\left( h(X\circ c_{v},\dot{c}_{v}) \right)= h(\nabla^{h}_{\dot{c}_{v}}X,\dot{c}_{v}).
	\end{equation*}
	Using the fact that the connection is compatible with the metric, the previous equation reduces to
	\begin{equation*}
	h(\nabla^{h}_{\dot{c}_{v}}X,\dot{c}_{v})+h(X\circ c_{v}, \nabla^{h}_{\dot{c}_{v}}\dot{c}_{v}) =h(\nabla^{h}_{\dot{c}_{v}}X,\dot{c}_{v})
	\end{equation*}
	where the first term on the left-hand side cancels with the term on the right-hand side, giving
	\begin{equation*}
	h(X\circ c_{v}, \nabla^{h}_{\dot{c}_{v}}\dot{c}_{v})= 0.
	\end{equation*}
	Since $X$ is an arbitrary section in $\Gamma(\D)$ we conclude that $P(\nabla^{h}_{\dot{c}_{v}}\dot{c}_{v})=0$. But, since $\dot{c}_{v}\in\D$, the connection is forced to satisfy $P(\nabla^{h}_{\dot{c}_{v}}\dot{c}_{v})=\nabla^{nh}_{\dot{c}_{v}}\dot{c}_{v}$. Hence, we conclude
	\begin{equation*}
	\nabla^{nh}_{\dot{c}_{v}}\dot{c}_{v}=0.
	\end{equation*}
\end{proof}

Using Theorem \ref{nhgeodesicth}, we will describe the action of the nonholonomic SODE $\Gamma_{(L_{h}, \D)}$ on basic and fiberwise linear functions on $\D$.

Note that a basic function on $\D$ is of the form $f\circ \tau_{\D}$, with $f\in C^{\infty}(Q)$ and $\tau_{\D}:\D\rightarrow Q$ the vector bundle projection. On the other hand, a fiberwise linear function on $\D$ is given by $\widehat{\alpha}$, with $\alpha\in \Gamma(\D^{*})$ and
\begin{equation*}
	\widehat{\alpha}(v)=\langle \alpha(\tau_{\D}(v)) , v \rangle,\quad v\in\D.
\end{equation*}
In addition, a fiberwise quadratic function on $\D$ has the form $T^{\mathfrak{q}}$, with $T$ a section of the vector bundle $\D^{*}\otimes\D^{*}\rightarrow Q$ and
\begin{equation}\label{Tquadratic}
	T^{\mathfrak{q}}(v)=T(v,v), \quad v\in \D.
\end{equation}

\begin{remark}
	If $U$ is an open subset $U$ of $Q$ with local coordinates $(q^{i})$, $\{e_{a}\}$ is a local basis of sections of $\Gamma(\D)$, $\{e^{a}\}$ the dual basis of $\Gamma(\D^{*})$ and 
	\begin{equation*}
	\alpha=\alpha_{a}(q)e^{a}, \quad T=T_{ab}(q)e^{a}\otimes e^{b},
	\end{equation*}
	then
	\begin{equation*}
	\widehat{\alpha}(q^{i},v^{a})=\alpha_{b}(q)v^{b}, \quad T^{\mathfrak{q}}(q^{i},v^{a})=T_{ab}(q)v^{a}v^{b},
	\end{equation*}
	where $(q^{i},v^{a})$ are the local coordinates in $\D$ induced by the local coordinates $(q^{i})$ on $Q$ and the local basis of sections of $\Gamma(\D)$.
\end{remark}

\begin{theorem}\label{Gammanh}
	Let $h$ be a pseudo-Riemannian metric and $\D$ be a distribution in the same conditions as in the previous theorem. If $\Gamma_{(L_{h},\D)}$ is the nonholonomic SODE associated to the problem then it acts on basic functions and on fiberwise linear functions on $\D$ in the following way
	\begin{equation}
	\Gamma_{(L_{h},\D)}(f\circ\tau_{\D})=\widehat{df}|_{\D}, \quad \Gamma_{(L_{h},\D)}(\widehat{\alpha})=(\nabla^{nh}\alpha)^{\mathfrak{q}},
	\end{equation}
	for $f\in C^{\infty}(Q)$ and $\alpha\in\Gamma(\D^{*})$, where $\nabla^{nh}$ is the nonholonomic connection and $\nabla^{nh}\alpha$ is the section of the vector bundle $\D^{*}\otimes\D^{*}\rightarrow Q$ given by
	\begin{equation*}
		(\nabla^{nh}\alpha) (X,Y)=(\nabla^{nh}_{X}\alpha) (Y)=X(\alpha(Y))-\alpha(\nabla^{nh}_X Y), \ \text{for} \ X,Y\in\Gamma(\D).
	\end{equation*}
\end{theorem}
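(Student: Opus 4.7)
\textbf{Proof plan for Theorem \ref{Gammanh}.} The key organizing principle is that $\Gamma_{(L_h,\D)}$ is a SODE on $\D$, so for each $v\in\D$ its integral curve is $t\mapsto\dot c_v(t)$, where $c_v\colon I\to Q$ is the base curve with $\dot c_v(0)=v$. Consequently, for any smooth function $F\in C^{\infty}(\D)$,
\begin{equation*}
\Gamma_{(L_h,\D)}(v)(F)=\frac{d}{dt}\Big|_{t=0}F(\dot c_v(t)).
\end{equation*}
Moreover, by Theorem \ref{nhgeodesicth}, $c_v$ is a geodesic of the nonholonomic connection $\nabla^{nh}$, i.e. $\nabla^{nh}_{\dot c_v}\dot c_v=0$. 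The whole proof consists of unwinding these two facts on the two classes of test functions.

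For basic functions, the computation is immediate: if $F=f\circ\tau_{\D}$ with $f\in C^{\infty}(Q)$, then $F(\dot c_v(t))=f(c_v(t))$, so
\begin{equation*}
\Gamma_{(L_h,\D)}(v)(f\circ\tau_{\D})=\frac{d}{dt}\Big|_{t=0}f(c_v(t))=df(q)(v)=\widehat{df}|_{\D}(v),
\end{equation*}
where $q=\tau_{\D}(v)$. This gives the first identity.

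For fiberwise linear functions, I would write $\widehat{\alpha}(\dot c_v(t))=\langle\alpha(c_v(t)),\dot c_v(t)\rangle$ and apply the Leibniz rule for the dual pairing $\Gamma(\D^{*})\times\Gamma(\D)\to C^{\infty}(Q)$ with respect to the connection induced by $\nabla^{nh}$ on $\D^{*}$. Because $\nabla^{nh}$ preserves $\Gamma(\D)$ (recall $\nabla^{nh}_X Y=P(\nabla^{g}_X Y)$ when $Y\in\Gamma(\D)$), the dual connection on $\D^{*}$ is well-defined through the very formula
\begin{equation*}
(\nabla^{nh}_X\alpha)(Y)=X(\alpha(Y))-\alpha(\nabla^{nh}_X Y),\qquad X,Y\in\Gamma(\D),
\end{equation*}
so the Leibniz rule holds automatically along the curve $c_v$:
\begin{equation*}
\frac{d}{dt}\langle\alpha(c_v(t)),\dot c_v(t)\rangle=(\nabla^{nh}_{\dot c_v}\alpha)(\dot c_v(t))+\langle\alpha(c_v(t)),\nabla^{nh}_{\dot c_v}\dot c_v\rangle.
\end{equation*}
The second term vanishes by Theorem \ref{nhgeodesicth}, and evaluating the remaining expression at $t=0$ gives
\begin{equation*}
\Gamma_{(L_h,\D)}(v)(\widehat{\alpha})=(\nabla^{nh}_v\alpha)(v)=(\nabla^{nh}\alpha)(v,v)=(\nabla^{nh}\alpha)^{\mathfrak q}(v),
\end{equation*}
proving the second identity.

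The main subtlety, rather than any hard calculation, is the bookkeeping point that must be made at the outset: $\nabla^{nh}$ is a connection on the vector bundle $\D\to Q$ (it maps $\Gamma(\D)$ into $\Gamma(\D)$), and hence induces a genuine dual connection on $\D^{*}\to Q$ satisfying the Leibniz rule used above. Once this is laid out, the argument is a direct consequence of the SODE property of $\Gamma_{(L_h,\D)}$ together with the geodesic characterization from Theorem \ref{nhgeodesicth}, with no additional computation needed.
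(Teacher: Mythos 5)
Your proof is correct, and it reaches the second identity by a genuinely different mechanism than the paper. Both arguments share the same skeleton — the SODE property reduces everything to differentiation along the tangent lift $t\mapsto\dot c_v(t)$, and Theorem \ref{nhgeodesicth} kills the term containing $\nabla^{nh}_{\dot c_v}\dot c_v$ — and your treatment of basic functions is identical to the paper's. The difference is in how the fiberwise linear case is handled: the paper converts $\alpha\in\Gamma(\D^{*})$ into the vector field $\sharp_{h}(\alpha)\in\Gamma(\D)$ defined by $h(\sharp_{h}(\alpha),X)=\alpha(X)$, writes $\widehat{\alpha}\circ\dot c_v=h(\sharp_h(\alpha)\circ c_v,\dot c_v)$, and then invokes the restricted metric compatibility \eqref{Dcompatibility} twice (once along the curve, once after extending $v$ to a section $X\in\Gamma(\D)$, with an add-and-subtract step and a final tensoriality remark to remove the dependence on the extension). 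You instead observe that $\nabla^{nh}$ preserves $\Gamma(\D)$, hence is an honest connection on the vector bundle $\D\to Q$, and use the canonical dual connection on $\D^{*}$ together with the Leibniz rule for the pairing along $c_v$; this bypasses the metric and the extension-of-$v$ argument entirely. What your route buys is economy and generality: it needs only that $\nabla^{nh}$ restricts to a bundle connection on $\D$ and that nonholonomic trajectories are its geodesics, so it would apply verbatim to any such connection, metric or not, and the well-definedness of $(\nabla^{nh}\alpha)^{\mathfrak{q}}(v)$ is automatic because the covariant derivative along the curve is already pointwise. What the paper's route makes visible is the role of the compatibility property \eqref{Dcompatibility} of $\nabla^{nh}$ with $h$ over $\Gamma(\D)$, which is the structural fact it wants to emphasize and reuse elsewhere; in effect the paper re-derives, by hand and through $\sharp_h$, exactly the dual-connection Leibniz identity you quote as standard. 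The only point you should state explicitly (you use it tacitly) is that the integral curves of $\Gamma_{(L_h,\D)}$ remain in $\D$, so that $\dot c_v$ is a section of $\D$ along $c_v$; this is what makes both the pairing $\langle\alpha(c_v(t)),\dot c_v(t)\rangle$ and the expression $\nabla^{nh}_{\dot c_v}\dot c_v$ meaningful in your bundle-theoretic setting, and it holds because $\Gamma_{(L_h,\D)}\in\mathfrak{X}(\D)$.
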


\begin{proof}
	Take $f\in C^{\infty}(Q)$ and $v\in \D$. Evaluating the vector field $\Gamma_{(L_{h},\D)}$ at $v$ and then applying it to the basic function $f\circ \tau_{\D}$ is equivalent to apply the vector $T\tau_{\D}(\Gamma_{(L_{h},\D)}(v))$ to the function $f$.
	
	Since $\Gamma_{(L_{h},\D)}$ is a SODE on $\D$, its projection to the tangent bundle $TQ$ is the identity on $\D$. Therefore, we obtain
	\begin{equation*}
		\Gamma_{(L_{h},\D)}(v)(f\circ \tau_{\D})=v(f),
	\end{equation*}
	which is exactly $\widehat{df}|_{\D}(v)$.
	
	As for the second expression, let $\alpha$ be a section of $\D^{*}$ and take a base integral curve of $\Gamma_{(L_{h},\D)}$ denoted by $c_{v}:I\rightarrow Q$, where the subscript means that $\dot{c}_{v}(0)=v$.
	
	Let $\sharp_{h}(\alpha):Q\rightarrow \D$ be the section of $\D$ given by
	\begin{equation*}
		h(\sharp_{h}(\alpha), X)=\alpha(X), \quad \forall X\in \Gamma(\D).
	\end{equation*}
	
	Applying $\Gamma_{(L_{h},\D)}(v)$ to the fiberwise linear function $\widehat{\alpha}$ is equivalent to
	\begin{equation*}
		\Gamma_{(L_{h},\D)}(v)(\widehat{\alpha})=\ddot{c}_{v}(0) (\widehat{\alpha}).
	\end{equation*}
	Using the definition of derivative along a curve, the last line is equivalent to
	\begin{equation*}
	\Gamma_{(L_{h},\D)}(v)(\widehat{\alpha})=\left. \frac{d}{dt} \right|_{t=0} (\widehat{\alpha}\circ \dot{c}_{v}(t)),
	\end{equation*}
	and thus, using the notation we have just introduced we write
	\begin{equation*}
	\Gamma_{(L_{h},\D)}(v)(\widehat{\alpha})=\left. \frac{d}{dt} \right|_{t=0} h(\sharp_{h}(\alpha)\circ c_{v}(t),\dot{c}_{v}(t)),
	\end{equation*}
	Using the compatibility condition \eqref{Dcompatibility}, this is equivalent to
	\begin{equation*}
	\Gamma_{(L_{h},\D)}(v)(\widehat{\alpha})=h((\nabla_{v}^{nh}\sharp_{h}(\alpha))( c_{v}(0)),v)+h(\sharp_{h}(\alpha)(c_{v}(0)),\nabla_{v}^{nh}\dot{c}_{v}(0)).
	\end{equation*}
	Since by Theorem \ref{nhgeodesicth}, $c_{v}$ is a geodesic of the connection $\nabla^{nh}$, the last term above vanishes.
	
	Suppose now that $X$ is a section of $\D$ extending $v$, i.e., $X(q)=v$. With this new ingredient the last equation may be rewritten as
	\begin{equation*}
	\Gamma_{(L_{h},\D)}(v)(\widehat{\alpha})=h(\nabla_{X}^{nh}\sharp_{h}\alpha (q),X (q)).
	\end{equation*}
	By adding and subtracting the term $h(\sharp_{h}\alpha(q), \nabla_{X}^{nh} X (q))$ in the previous equation we may apply \eqref{Dcompatibility} and get
	\begin{equation*}
	\Gamma_{(L_{h},\D)}(v)(\widehat{\alpha})=X(q)h(\sharp_{h}\alpha,X )-h(\sharp_{h}\alpha (q), \nabla_{X}^{nh} X(q)),
	\end{equation*}
	and finally unyielding the definition of $\sharp_{g}(\alpha)$ we deduce
	\begin{equation*}
	\Gamma_{(L_{h},\D)}(v)(\widehat{\alpha})=[(\nabla_{X}^{nh} \alpha) (X)] (q).
	\end{equation*}
	The right-hand side of the last equation is a $(0,2)$-tensor, as such, its value does not depend on the whole section and thus $\nabla^{nh}\alpha (v,v)$ is well-defined. Therefore, using the notation introduced before the theorem, it can be rewritten as $(\nabla^{nh}\alpha)^{\mathfrak{q}}(v)$.
\end{proof}


\section{Nonholonomic Jacobi fields}\label{non-holonomic-Jacobi-fields}

\subsection{Definition and some examples}

First of all, we will introduce the notion of a Jacobi field for a nonholonomic system as an extension of the definition of a Jacobi field (over a geodesic) for a Riemannian metric.

\begin{definition}\label{definition31}
	Let $(L,\D)$ be a nonholonomic system with configuration manifold $Q$. A vector field $W:I\rightarrow TQ$ along a curve $c:I\rightarrow Q$ is said to be a \textit{nonholonomic Jacobi field} for the system $(L,\D)$ if it is the infinitesimal variation vector field of a family of nonholonomic trajectories of $(L,\D)$.
\end{definition}
 
 So, according to the definition,
 \begin{equation*}
	 W(t)=\left. \frac{\partial}{\partial s}\right|_{s=0}(\tau_{Q}\circ \Phi)(s,t),
 \end{equation*}
 where
 \begin{equation*}
 	\begin{split}
	 	\Phi:(-\varepsilon,\varepsilon)\times I & \rightarrow \D \\
	 	(s,t) & \mapsto \Phi_{s}(t)
 	\end{split}
 \end{equation*}
is a smooth map and, for each $s\in (-\varepsilon,\varepsilon)$, $\Phi_{s}:I\rightarrow \D$ is the tangent lift $\dot{c}_{s}:I\rightarrow \D$ of a trajectory $c_{s}:I\rightarrow Q$ of $(L,\D)$ with $c_{0}=c$.

We remark that, in general, $W$ its not a section of $\D$. Its value may well assume any vector in $TQ$ (see the next example \ref{nhparjacobi}).

In what follows, we will assume that $(L,\D)$ is regular. In fact, we will assume that $L=L_{g}$ with $g$ a Riemannian metric. Thus, we can consider the nonholonomic SODE $\Gamma_{(L_{g},\D)}$ and
\begin{equation*}
	\Phi_{s}(t)=\phi_{t}^{\Gamma_{(L_{g},\D)}}(v(s)),
\end{equation*}
where $\phi_{t}^{\Gamma_{(L,\D)}}$ is the local flow of $\Gamma_{(L_{g},\D)}$ and $v:(-\varepsilon,\varepsilon)\rightarrow\D$ is a curve on $\D$. Therefore, a Jacobi field could be written as
\begin{equation*}
W(t)=\left. \frac{\partial}{\partial s}\right|_{s=0}\left(\tau_{Q}\circ \phi_{t}^{\Gamma_{(L_{g},\D)}}(v(s))\right),
\end{equation*}

\begin{remark}
	If the system is unconstrained, that is  $\D=TQ$, then it is clear that $W:I\rightarrow TQ$ is a Jacobi field for the system $(L_{g},TQ)$ if and only if $W$ is a Jacobi field for the Riemannian metric $g$ on $Q$ (see, for instance, \citep*{ON83}).
\end{remark}

Next, we will present a method that allows us to obtain, under certain conditions, nonholonomic Jacobi fields.
 
 \begin{theorem}\label{jacobikilling}
 	Let $(L_{g},\D)$ be a purely kinematic nonholonomic system on the manifold $Q$ associated with the Riemannian metric $g$, $c_{v}:I\rightarrow Q$ a nonholonomic solution and let $W\in\mathfrak{X}(Q)$  be a vector field satisfying the following three conditions:
 	\begin{enumerate}
 		\item[(i)] $[W,\Gamma(\D)]\subseteq \Gamma(\D)$ ;
 		\item[(ii)] $\left. \mathcal{L}_{W}g \right|_{\Gamma(\D)\times\Gamma(\D)}=0$;
 		\item[(iii)] $\left. \mathcal{L}_{W}g \right|_{[\Gamma(\D),\Gamma(\D)]\times\Gamma(\D)}=0$;
 	\end{enumerate}
 	then $W\circ c_{v}:I\longrightarrow TQ$ is a Jacobi field along the nonholonomic solution $c_{v}$.
 \end{theorem}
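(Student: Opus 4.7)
The strategy is to exhibit an explicit one-parameter family of nonholonomic trajectories whose variation field at $s=0$ equals $W\circ c_v$; Definition \ref{definition31} then finishes the proof. The natural candidate is $c_s(t):=\psi_s(c_v(t))$, where $\psi_s$ denotes the local flow of $W$ on a neighbourhood of $c_v$. Plainly $c_0=c_v$ and $\partial_s c_s(t)|_{s=0}=W(c_v(t))$, so everything reduces to showing that each $c_s$ is a nonholonomic trajectory. The admissibility check $\dot c_s(t)=T_{c_v(t)}\psi_s(\dot c_v(t))\in\D_{c_s(t)}$ is immediate from hypothesis (i), since $[W,\Gamma(\D)]\subseteq\Gamma(\D)$ is exactly the infinitesimal statement that $\psi_s$ preserves $\D$.

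For the dynamical check, Theorem \ref{nhgeodesicth} reduces the task to verifying $g(\nabla^g_{\dot c_s}\dot c_s,Y\circ c_s)=0$ for every $Y\in\Gamma(\D)$. I would extend $\dot c_v$ locally to a section $X\in\Gamma(\D)$, set $X':=(\psi_s)_\ast X\in\Gamma(\D)$ (using (i) again), and note that $X'$ extends $\dot c_s$ along $c_s$. The Koszul formula for $\nabla^g$ then gives
\begin{equation*}
g(\nabla^g_{X'}X',Y)=X'\,g(X',Y)-\tfrac12\,Y\,g(X',X')-g([X',Y],X').
\end{equation*}
Setting $\tilde Y:=(\psi_{-s})_\ast Y\in\Gamma(\D)$ and transporting each right-hand-side term back to $c_v$ via $\psi_s$, Koszul run in reverse should yield $g(\nabla^g_{\dot c_s}\dot c_s,Y)(c_s(t))=g(\nabla^g_{\dot c_v}\dot c_v,\tilde Y)(c_v(t))$, which vanishes because $c_v$ is a nonholonomic trajectory and $\tilde Y\in\Gamma(\D)$.

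What legitimises the transport is integrating the infinitesimal invariances (ii) and (iii) to finite ones. Combining $\tfrac{d}{ds}\psi_s^\ast g=\psi_s^\ast \mathcal{L}_W g$ with (i) — which keeps the arguments trapped inside $\Gamma(\D)$ throughout the flow — promotes (ii) to $(\psi_s^\ast g)(X_1,X_2)=g(X_1,X_2)$ on $\Gamma(\D)\times\Gamma(\D)$, and analogously, using also $(\psi_s)_\ast[X_1,X_2]=[(\psi_s)_\ast X_1,(\psi_s)_\ast X_2]$, promotes (iii) to $(\psi_s^\ast g)([X_1,X_2],Y)=g([X_1,X_2],Y)$ for $X_1,X_2,Y\in\Gamma(\D)$. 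The main obstacle is precisely the last bracket term in Koszul: because $\D$ is nonholonomic, $[X',Y]$ generically escapes $\D$, so (ii) alone is insufficient, and hypothesis (iii) is exactly what is required to control $\mathcal{L}_W g$ on the strictly larger subset where these brackets live. Everything else is routine bookkeeping.
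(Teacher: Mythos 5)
Your proposal is correct, but it reaches the key intermediate claim by a genuinely different route than the paper. Both arguments build the same variation $\Phi(s,t)=\psi_{s}(c_{v}(t))$ and reduce the theorem to showing that the flow of $W$ maps nonholonomic trajectories to nonholonomic trajectories. The paper does this \emph{infinitesimally}: it first shows $W^{c}|_{\D}\in\mathfrak{X}(\D)$ (your admissibility step, phrased on $\D$), then proves the commutation $[W^{c}|_{\D},\Gamma_{(L_{g},\D)}]=0$ by evaluating the bracket on basic and fiberwise linear functions on $\D$, using Theorem \ref{Gammanh} and the complete/vertical lift calculus of the appendices, and finally lets the commuting flows produce the finite statement. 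You instead integrate the hypotheses directly: (i) gives flow-invariance of $\D$; (ii) and (iii) integrate along $\psi_{s}$ (via $\tfrac{d}{ds}\psi_{s}^{*}g=\psi_{s}^{*}\Le_{W}g$ and naturality of the bracket) to finite invariance of $g$ on $\Gamma(\D)\times\Gamma(\D)$ and on $[\Gamma(\D),\Gamma(\D)]\times\Gamma(\D)$; then Theorem \ref{nhgeodesicth} plus the Koszul identity transport the projected geodesic equation $P(\nabla^{g}_{\dot{c}_{v}}\dot{c}_{v})=0$ from $c_{v}$ to $c_{s}$ — I checked the three Koszul terms and the transport does close up as you claim. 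What the paper's route buys is the commutator identity itself, which is stronger than what the theorem needs and is of a piece with the lifted-system machinery that drives Theorem \ref{MainTheorem}; what your route buys is an argument that never leaves $Q$, avoids the lifted calculus entirely, and isolates exactly why (iii) is indispensable (the bracket term in Koszul is the only place where arguments escape $\D$). Two pieces of bookkeeping should be made explicit: the extension of $\dot{c}_{v}$ to a local section $X\in\Gamma(\D)$ requires $c_{v}$ to be locally embedded (if $\dot{c}_{v}$ vanishes the trajectory is constant and the claim is trivial; otherwise extend the components of $\dot{c}_{v}$ in a local frame of $\D$ off the curve), and your reduction via Theorem \ref{nhgeodesicth} uses its converse — a $\nabla^{nh}$-geodesic with initial velocity in $\D$ is a nonholonomic trajectory — which holds by uniqueness of geodesics of the linear connection $\nabla^{nh}$ with given initial velocity.
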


\begin{proof}
	Let us first show that the vector field $\left.W^{c}\right|_{\D}\in\mathfrak{X}(\D)$, which is clearly equivalent to having its flow $T\phi_{t}^{W}$ contained in $\D$. Given $\alpha\in\Gamma(\D^{o})$, its associated fiberwise linear function $\widehat{\alpha}\in C^{\infty}(TQ)$ vanishes on $\D$. In fact, 
	\begin{equation*}
		\D=\{ v\in TQ \ | \ \widehat{\alpha}(v)=0, \ \forall \ \alpha\in \Gamma(\D^{o}) \}.
	\end{equation*}
	Therefore, it is enough to show that $W^{c}(\widehat{\alpha})|_{\D}=0$, for $\alpha\in\Gamma(\D^{o})$. Let $X\in\Gamma(\D)$ then
	\begin{equation*}
		W^{c}(\widehat{\alpha}) \circ X=\widehat{\Le_{W}\alpha} \circ X,
	\end{equation*}
	using the definition of complete lift (see equation \eqref{completelift} in Appendix A). Applying now the characterization of the Lie derivative of a one-form we deduce
	\begin{equation*}
		W^{c}(\widehat{\alpha}) \circ X=W(\alpha(X))-\alpha([W,X]),
	\end{equation*}
	The first term vanishes identically since $\alpha$ is a section of the annihilator of $\D$ and $X$ is a section of $\D$ while the second one vanishes since $[W,X]$ is a section of $\D$, by the first hypothesis in the statement of the theorem. Hence, since $X$ was arbitrary, we deduce that $W^{c}|_{\D}\in \mathfrak{X}(\D)$.
	
	Now, if the vector fields $W^{c}|_{\D}$ and $\Gamma_{(L_{g},\D)}$ commute, then their flows $T\phi_{s}^{W}$ and $\phi_{t}^{\Gamma_{(L_{g},\D)}}$, respectively, also commute. Take $v\in\D$ and project the composition of the flows to $Q$ using the bundle projection $\tau_{\D}:\D\rightarrow Q$. Then
	\begin{equation*}
		\left(\tau_{\D}\circ T\phi_{s}^{W}\circ \phi_{t}^{\Gamma_{(L_{g},\D)}}\right)(v) =\left(\tau_{\D}\circ \phi_{t}^{\Gamma_{(L_{g},\D)}}\circ T\phi_{s}^{W}\right)(v).
	\end{equation*}
	Since the tangent lift of the flow of $W$ is a vector bundle isomorphism over $\phi_{s}^{W}$ and since the projection $\tau_{\D}(\phi_{t}^{\Gamma_{L_{g},\D}}(v))$ of $\phi_{t}^{\Gamma_{L_{g},\D}}(v)$ is just the trajectory of $\Gamma_{(L_{g},\D)}$ with initial velocity $v\in\D$, which we denote in general by $c_{v}$, we find
	\begin{equation*}
		\left(\phi_{s}^{W}\circ \tau_{\D}\circ \phi_{t}^{\Gamma_{(L_{g},\D)}}\right)(v) =c_{T\phi_{s}^{W}(v)}(t).
	\end{equation*}
	And applying similar considerations again, the last line reduces to
	\begin{equation*}		
		\phi_{s}^{W}\circ c_{v}(t) =c_{T\phi_{s}^{W}(v)}(t).
	\end{equation*}
	This computation proves that the 2-parameter family
	\begin{equation}
		\Phi:(t,s)\mapsto \phi_{s}^{W}\circ c_{v}(t)
	\end{equation}
	is actually a variation by trajectories of $\Gamma_{(L_{g},\D)}$. Moreover, its infinitesimal variation vector field is given by
	\begin{equation*}
		\left. \frac{d \Phi}{d s}\right|_{s=0}(t)=\left. \frac{d}{d s}\right|_{s=0}\phi_{s}^{W}\circ c_{v}(t)=W(c_{v}(t)).
	\end{equation*}
	Therefore $W\circ c_{v}:I\rightarrow TQ$ is a Jacobi field along $c_{v}$.
	
	So, all we need to show is that $W^{c}|_{\D}$ and $\Gamma_{(L_{g},\D)}$ commute. We will prove this result in the next proposition.

	\begin{proposition}
		If $(L_{g},\D)$ is a nonholonomic system on $Q$ and $W$ is a vector field on $Q$ in the same conditions as in Theorem \ref{jacobikilling}, then we have that
		\begin{equation*}
			[W^{c}|_{\D},\Gamma_{(L_{g},\D)}]=0.
		\end{equation*}
	\end{proposition}
	
	\begin{proof}
	We will prove the proposition by computing the action of $[W^{c}|_{\D},\Gamma_{L_{g},\D}]$ on basic and fiberwise linear functions in $\C^{\infty}(\D)$, which are generated by functions of the type $f\circ\tau_{\D}$ and $\widehat{\alpha}$, with $f\in\C^{\infty}(Q)$ and $\alpha\in\Gamma(\D^{*})$. We have that
	\begin{equation*}
		[W^{c}|_{\D},\Gamma_{(L_{g},\D)}](f\circ\tau_{Q})=W^{c}|_{\D}(\Gamma_{(L_{g},\D)}(f\circ\tau_{Q}))-\Gamma_{(L_{g},\D)}(W^{c}|_{\D}(f\circ\tau_{Q})).
	\end{equation*}
	
	Using equation \eqref{completelift} in Appendix A and Theorem \ref{Gammanh} the last line becomes
	\begin{equation*}
		\begin{split}
			[W^{c}|_{\D},\Gamma_{(L_{g},\D)}](f\circ\tau_{\D}) & =W^{c}|_{\D}(\widehat{df}|_{\D})-\Gamma_{(L_{g},\D)}(W(f)\circ\tau_{\D}) \\
			& = \widehat{\Le_W df}|_{\D}-\widehat{d(W(f))}|_{\D}=0.
		\end{split}
	\end{equation*}
	
	On the other hand, the actions over functions $\widehat{\alpha}$ with $\alpha\in\Gamma(D^{*})$ is given by
	\begin{equation*}
	[W^{c}|_{\D},\Gamma_{(L_{g},\D)}](\widehat{\alpha})=W^{c}|_{\D}(\Gamma_{(L_{g},\D)}(\widehat{\alpha}))-\Gamma_{(L_{g},\D)}(W^{c}|_{\D}(\widehat{\alpha})).
	\end{equation*}
	It is a simple computation to show that on $\D$
	\begin{equation}
	\widehat{\alpha}=\widehat{P^{*}\alpha}|_{\D}, \quad (\nabla^{nh}\alpha)^{\mathfrak{q}}=(\nabla^{nh}P^{*}\alpha)^{\mathfrak{q}}|_{\D}=(\nabla^{g}P^{*}\alpha)^{\mathfrak{q}}|_{\D},
	\end{equation}
	where $P:TQ\rightarrow \D$ is the orthogonal projector, $\nabla^{nh}$ is the nonholonomic connection and $\nabla^{g}$ is the Levi-Civita connection with respect to $g$. In the expression above, we extended the notation for fiberwise quadratic functions we introduced before. Indeed, given any vector bundle $V\rightarrow Q$ and a section $T$ of $V^{*}\otimes V^{*}\rightarrow Q$, then $T^{\mathfrak{q}}$ is the fiberwise quadratic function on $V$ induced by $T$.
		
	Hence, using again equation \eqref{completelift} and Theorem \ref{Gammanh} we have that
	\begin{equation}\label{commutator}
		[W^{c}|_{\D},\Gamma_{(L_{g},\D)}](\widehat{\alpha})=(\Le_{W}(\nabla^{g}P^{*}\alpha))^{\mathfrak{q}}|_{\D}-(\nabla^{nh}\Le_{W} (P^{*}\alpha)|_{\D})^{\mathfrak{q}},
	\end{equation}
	where we have also used equation \eqref{completequadratic} on the first term of the right-hand side. Both terms appearing above are fiberwise quadratic functions associated to $(0,2)$-tensors (see equation \eqref{completequadratic} in Appendix A). Given $X\in\Gamma(\D)$, the first term reduces to
	\begin{equation*}
		\Le_{W}(\nabla^{g}P^{*}\alpha) (X,X)=W(\nabla^{g}_{X}P^{*}\alpha (X))-\nabla^{g}_{[W,X]}P^{*}\alpha (X)-\nabla^{g}_{X}P^{*}\alpha ([W,X]).
	\end{equation*}
	Note that $\exists$ $Y\in \Gamma(\D)$ such that $P^{*}\alpha=\flat_{g}(Y)$. So we can rewrite the expression above in terms of the vector field $Y$. Moreover, using the identity
	\begin{equation}
		\flat_{g}(\nabla^{g}_{X} Y)=\nabla^{g}_{X}\flat_{g}(Y), \quad X,Y\in\mathfrak{X}(Q),
	\end{equation}
	we get
	\begin{equation*}
		\Le_{W}(\nabla^{g}P^{*}\alpha) (X,X)=W(\flat_{g}(\nabla^{g}_{X}Y) (X))-\flat_{g}(\nabla^{g}_{[W,X]}Y) (X)-\flat_{g}(\nabla^{g}_{X}Y) ([W,X]).
	\end{equation*}
	Now we use Lemma \ref{liedelemmma} in Appendix B to reduce the previous to
	\begin{equation*}
		\begin{split}
			\Le_{W}(\nabla^{g}P^{*}\alpha) (X,X) & =\frac{1}{2}W(\Le_{Y}g(X,X))-\frac{1}{2}(\Le_{Y}g([W,X],X)\\
			& -d(\flat_{g}(Y))([W,X],X))-\frac{1}{2}(\Le_{Y}g(X,[W,X]) \\
			& -d(\flat_{g}(Y))(X,[W,X])) \\
			& = \frac{1}{2}W(\Le_{Y}g(X,X))-\Le_{Y}g([W,X],X) \\
			& = \frac{1}{2}\Le_{W}(\Le_{Y}g)(X,X)
		\end{split}
	\end{equation*}
	But, one can prove that for a $(0,2)$-tensor $g$ and any $X, Y, Z, Z' \in\mathfrak{X}(Q)$ we have
	\begin{equation}
		\Le_{[X,Y]}g (Z,Z')=\Le_{X}(\Le_{Y}g)(Z,Z')-\Le_{Y}(\Le_{X}g)(Z,Z').
	\end{equation}
	Hence, using this fact, we conclude that
	\begin{equation*}
	\begin{split}
		\Le_{W}(\nabla^{g}P^{*}\alpha) (X,X) & =\frac{1}{2}\Le_{[W,Y]}g (X,X)+\frac{1}{2}\Le_{Y}(\Le_{W}g)(X,X) \\
		 & = g(\nabla^{g}_{X}[W,Y],X),
	\end{split}
	\end{equation*}
	where
	\begin{equation*}
		\frac{1}{2}\Le_{Y}(\Le_{W}g)(X,X)=\frac{1}{2}Y((\Le_{W}g)(X,X))-(\Le_{W}g)([Y,X],X)
	\end{equation*}
	vanishes because $W$ satisfies hypothesis $(ii)$ and $(iii)$. Thus,
	\begin{equation*}
		\left(\Le_{W}(\nabla^{g}P^{*}\alpha)\right) (X,X)=g(\nabla^{g}_{X}[W,Y],X).
	\end{equation*}
	As for the second term in \eqref{commutator}, we proceed by unyielding the definitions
	\begin{equation*}
		\nabla^{nh}\Le_{W} P^{*}\alpha (X,X)=\nabla^{nh}_{X}(\Le_{W} \flat_{g}(Y)) X=X(\Le_{W} \flat_{g}(Y) (X))-\Le_{W} \flat_{g}(Y)(\nabla^{nh}_{X} X).
	\end{equation*}
	For any $Z\in\Gamma(\D)$ one has that
	\begin{equation}
		\Le_{W} \flat_{g}(Y) (Z)=\flat_{g}([W,Y])(Z)+(\Le_{W}g)(Y,Z)=\flat_{g}([W,Y])(Z).
	\end{equation}
	Therefore,
	\begin{equation*}
		\begin{split}
			\nabla^{nh}\Le_{W} P^{*}\alpha (X,X) & =X(\flat_{g}([W,Y])(X))-\flat_{g}([W,Y])(\nabla^{nh}_{X} X) \\
			 & = X(g([W,Y],X))-g([W,Y],\nabla^{nh}_{X} X) \\
			 & = X(g([W,Y],X))-g([W,Y],P\nabla^{g}_{X} X).
		\end{split}
	\end{equation*}
	So, using that $[W,Y]\in\Gamma(\D)$, it follows that
	\begin{equation*}
		\begin{split}
			 \nabla^{nh}\Le_{W} P^{*}\alpha (X,X) & = X(g([W,Y],X))-g([W,Y],\nabla^{g}_{X} X) \\
			 & = g(\nabla^{g}_{X}[W,Y],X).
		\end{split}
	\end{equation*}
	Hence both terms in equation \eqref{commutator} cancel and	$[W^{c}|_{\D},\Gamma_{L_{g},\D}](\widehat{\alpha})=0$.
	\end{proof}
\end{proof}

From Theorem \ref{jacobikilling}, it follows that
\begin{corollary}\label{killingcorollary}
	Let $(L_{g},\D)$ be a purely kinematic nonholonomic system on the manifold and $c_{v}:I\rightarrow Q$ a nonholonomic solution with initial velocity $v\in\D$. If $W$ is an infinitesimal symmetry of the system $(L_{g},\D)$, that is, $W$ is a Killing vector field for the Riemannian metric $g$ (i.e., $\Le _{W} g=0$) and an infinitesimal symmetry of $\D$ (that is $[W,\Gamma(\D)]\subseteq \Gamma(\D)$) then $W\circ c_{v}:I\rightarrow TQ$ is a nonholonomic Jacobi field for the system $(L_{g},\D)$.
\end{corollary}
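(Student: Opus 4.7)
The plan is to deduce this corollary directly from Theorem \ref{jacobikilling} by checking that its three hypotheses are consequences of the stronger assumptions stated in the corollary. This is really a bookkeeping exercise rather than a new argument: the Killing condition $\mathcal{L}_W g = 0$ is global on $\mathfrak{X}(Q)\times\mathfrak{X}(Q)$ and so it in particular restricts to zero on any sub-bimodule of pairs of vector fields, while the infinitesimal symmetry of $\D$ is exactly condition (i) of the theorem.

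More precisely, first I would observe that the hypothesis $[W,\Gamma(\D)] \subseteq \Gamma(\D)$ is literally condition (i) of Theorem \ref{jacobikilling}, so nothing needs to be checked there. Next, since $\mathcal{L}_W g (X,Y)=0$ for \emph{every} pair $X,Y\in \mathfrak{X}(Q)$, it is in particular zero whenever $(X,Y)\in \Gamma(\D)\times\Gamma(\D)$, which gives condition (ii). Finally, because $[\Gamma(\D),\Gamma(\D)]\subseteq\mathfrak{X}(Q)$, the same Killing identity forces $\mathcal{L}_W g (Z,X) = 0$ for every $Z\in[\Gamma(\D),\Gamma(\D)]$ and every $X\in\Gamma(\D)$, establishing condition (iii).

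With all three hypotheses of Theorem \ref{jacobikilling} in hand, I would conclude that for every $v\in\D$ the vector field $W\circ c_v : I \to TQ$ is a nonholonomic Jacobi field along the nonholonomic trajectory $c_v$, which is precisely the statement of the corollary. There is no real obstacle: the only subtlety worth emphasising in the write-up is that we never need the full strength of the Killing condition outside $\Gamma(\D)$ — Theorem \ref{jacobikilling} only requires $\mathcal{L}_W g$ to vanish on the much smaller set of arguments described by (ii) and (iii), so the corollary is stating a genuinely weaker-hypothesis special case, even though for this particular corollary we are simply using that the global Killing condition implies both of those restricted vanishings.
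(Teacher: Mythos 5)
Your proposal is correct and matches the paper's own (essentially implicit) argument: the paper simply states that the corollary follows from Theorem \ref{jacobikilling}, since condition (i) is literally the infinitesimal symmetry hypothesis and conditions (ii) and (iii) are immediate restrictions of the global Killing identity $\Le_{W}g=0$. Your closing remark that the theorem genuinely requires less than the full Killing condition is also accurate and is precisely why the paper presents the theorem first and this statement as a corollary.
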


\begin{remark}
	If the system $(L_{g},\D)$ is unconstrained (that is, $\D=TQ$), then using Corollary \ref{killingcorollary}, we recover a well-known result in Riemannian geometry (see, for example, Lemma 26, Chapter 9 in \citep*{ON83}): the restriction of a Killing vector field to a geodesic is a Jacobi field for the Riemannian metric.
\end{remark}

\begin{example}\label{nhparjacobi}
	We show, by applying the previous corollary, that the vector field $W=\frac{\partial}{\partial z}$ is a Jacobi field for the nonholonomic particle, along any nonholonomic solution.
	
	It is clear that the first condition in the theorem is satisfied, since the vector field $\frac{\partial}{\partial z}$ commutes with the vector fields $e_{1}=\frac{\partial}{\partial x}+y\frac{\partial}{\partial z}$ and $e_{2}=\frac{\partial}{\partial y}$ generating the module of sections $\Gamma(\D)$.
	
	On the other hand, $\frac{\partial}{\partial z}$ is a Killing vector field for the euclidean metric $g$ on $\R^{3}$, so it satisfies the hypothesis in Corollary \ref{killingcorollary}.
	
	Therefore, by Corollary \ref{killingcorollary}, the vector field  $\frac{\partial}{\partial z}$ is a Jacobi field along any trajectory of the nonholonomic system $(L_{g},\D)$.
\end{example}

\begin{example}
	A more physical example is the vertical rolling disk, which models the motion of a rolling penny on a plane. It is a nonholonomic system with a Lagrangian function of kinetic type given by $L_{g}:T(\R^{2}\times \Es^{1}\times \Es^{1})\rightarrow \R$, with
	\begin{equation*}
		L_{g}(x,y,\theta,\varphi,\dot{x},\dot{y},\dot{\theta},\dot{\varphi})=\frac{1}{2}(\dot{x}^{2}+\dot{y}^{2}+I\dot{\theta}^{2}+J\dot{\varphi}^{2}),
	\end{equation*}
	where $I$ and $J$ are real numbers known as moment of inertia and nonholonomic constraints imposed by the equations
	\begin{equation*}
		\dot{x}=R \dot{\theta}\cos (\varphi), \quad \dot{y}=R \dot{\theta}\sin (\varphi),
	\end{equation*}
	 where $R$ is the radius of the disk. For more details see \citep*{Bloch}. Now, it is easy to see that the constraints form a distribution $\D$ with $\text{rank}(\D)=2$ generated by the vector fields
	 \begin{equation*}
	 	e_{1}=R \cos (\varphi)\frac{\partial}{\partial x} + R \sin (\varphi)\frac{\partial}{\partial y}+\frac{\partial}{\partial \theta}, \quad e_{2}=\frac{\partial}{\partial \varphi}.
	 \end{equation*}
	 It is easy to see that the vector field $W=\frac{\partial}{\partial \theta}$ is an infinitesimal symmetry of $\D$. Moreover, $W$ is a Killing vector field for the Riemannian metric $g$ on $\R^{2}\times \Es^{1}\times \Es^{1}$ associated to the Lagrangian $L_{g}$.
\end{example}

\begin{example}
	We will consider again the nonholonomic particle. However, now we will consider a counter-example of a Jacobi field which is not a Killing vector field for $g$ and another one of a Jacobi field which is not a symmetry of the distribution.
	
	Let $c_{v(s)}:I\rightarrow \R^{3}$ be a trajectory of the nonholonomic particle with $c_{v}(0)=(x_{0},y_{0},z_{0})$ and initial velocity $v(s)=(\dot{x}_{0}(s),\dot{y}_{0}(s),y_{0}\dot{x}_{0}(s))$ for each $s\in(-\varepsilon,\varepsilon)$.
	
	On one hand, suppose that $\dot{y}_{0}(s)\equiv 0$ and so the trajectory has the local expression
	\begin{equation}\label{nhsolution0}
	\begin{cases}
	x_{s}(t)= \dot{x}_0(s) t+x_0\\
	y_{s}(t)=y_0 \\
	z_{s}(t)= y_0 \dot{x}_0(s) t+z_0.
	\end{cases}
	\end{equation}
	The curve $W:I\rightarrow TQ$ defined by
	\begin{equation*}
	W(t)= \left. \frac{d}{ds} \right|_{s=0} c_{v(s)} = u t \cdot \left(\frac{\partial}{\partial x}+y_{0} \frac{\partial}{\partial z}\right)
	\end{equation*}
	is a Jacobi field along $c_{v(0)}$ by definition, where $u$ denotes $\frac{d}{ds}|_{s=0}\dot{x}_0(s)$. Supposing that $\dot{x}_0(0)$ is not zero then the vector field $\widetilde{W}\in\mathfrak{X}(\R^{3})$ defined by
	\begin{equation*}
	\widetilde{W}(x,y,z)=u \cdot\left(\frac{x-x_{0}}{\dot{x}_0(0)}\right)\cdot \left(\frac{\partial}{\partial x}+y \frac{\partial}{\partial z}\right)
	\end{equation*}
	extends $W(t)$ over the curve $c_{v(0)}$, that is,
	\begin{equation*}
	W(t)= (\widetilde{W}\circ c_{v(0)})(t).
	\end{equation*}
	However, as it is clear, $\widetilde{W}$ is not a symmetry of the distribution.
	
	On the other hand, suppose that $\dot{y}_{0}(s)$ does not vanish. Then the local expression of the trajectory is
	\begin{equation}\label{nhsolution}
	\begin{cases}
	x_{s}(t)= \frac{\dot{x}_0 (s)}{\dot{y}_0 (s)}\sqrt{y_0^2+1}(\arcsinh(\dot{y}_0 (s) t+y_0)-\arcsinh(y_0))+x_0\\
	y_{s}(t)=\dot{y}_0 (s) t+y_0 \\
	z_{s}(t)= \frac{\dot{x}_0 (s)}{\dot{y}_0 (s)}\sqrt{y_0^2+1}(\sqrt{(\dot{y}_0 (s) t+y_0)^2+1}-\sqrt{y_0^2+1})+z_0.
	\end{cases}
	\end{equation}
	Suppose that
	\begin{equation*}
	\left. \frac{d}{ds}\right|_{s=0}\dot{x}_0 (s)=u \quad \text{and} \quad \left.\frac{d}{ds}\right|_{s=0}\dot{y}_0 (s)=0.
	\end{equation*}
	Then the vector field $W:I\rightarrow TQ$ defined as before is a Jacobi field over the trajectory $c_{v(0)}$ and has the local expression
	\begin{equation*}
	\begin{split}
	W(t)=  \frac{u}{\dot{y}_0 (0)}\sqrt{y_0^2+1} \cdot & \left[ \left(\arcsinh(\dot{y}_0 (0) t+y_0)-\arcsinh(y_0)\right) \frac{\partial}{\partial x} \right. \\
	& \left. + \left( \sqrt{(\dot{y}_0 (0) t+y_0)^2+1}-\sqrt{y_0^2+1} \right) \frac{\partial}{\partial z} \right].
	\end{split}
	\end{equation*}
	Following the same construction as before, supposing that $\dot{x}_0 (0)$ does not vanish, then the vector field
	\begin{equation*}
	\widetilde{W}(x,y,z)=\frac{u}{\dot{x}_0(0)} \left((x-x_{0})\frac{\partial}{\partial x}+(z-z_{0}) \frac{\partial}{\partial z}\right)
	\end{equation*}
	extends $W(t)$, in the same sense than before. However, it is easy to check that
	\begin{equation}
	(\Le_{\widetilde{W}} g)\left(\frac{\partial}{\partial x},\frac{\partial}{\partial x}\right)=\frac{2 u}{\dot{x}_0(0)},
	\end{equation}
	hence $\widetilde{W}$ is not a Killing vector field for $g$.
\end{example}

\begin{example}
	Let us find a similar counterexample for the vertical rolling disk dynamics.
	
	Let $c_{v(s)}:I\rightarrow \R^{2}\times \Es^{1} \times \Es^{1}$ be a trajectory of the vertical rolling disk with $c_{v}(0)=(x_{0},y_{0},\theta_{0},\varphi_{0})$ and initial velocity in $\D$ given by $v(s)=(\dot{x}_{0}(s),\dot{y}_{0}(s),\Omega(s),\omega(s))$ for each $s\in(-\varepsilon,\varepsilon)$.
	
	The explicit solution of the nonholonomic dynamics is discussed in \citep*{Bloch}, where we find that
	\begin{equation}\label{angular:variables}
	\begin{cases}
	\theta_{s}(t)=\Omega(s)t+\theta_{0} \\
	\varphi_{s}(t)=\omega(s)t+\varphi_{0}
	\end{cases}
	\end{equation}
	and the expression for the variables $x$ and $y$ is determined by integrating the constraints.
	
	Suppose that $\omega(s)\equiv 0$, in which case the trajectory is given by the local expressions \eqref{angular:variables} and
	\begin{equation*}
	\begin{cases}
	x_{s}(t)=\Omega(s)tR \cos(\varphi_{0})+x_{0} \\
	y_{s}(t)=\Omega(s)tR\sin(\varphi_{0})+y_{0}.
	\end{cases}
	\end{equation*}
	Now, let
	\begin{equation*}
	\left. \frac{d}{ds} \right|_{s=0}\Omega(s)=u \quad \text{and} \quad \Omega(0)=\Omega_{0},
	\end{equation*}
	with $\Omega_{0}$ different from zero. Then the vector field $W:I\rightarrow TQ$ obtained by
	\begin{equation*}
	W(t)= \left. \frac{d}{ds} \right|_{s=0} c_{v(s)} = u t \cdot \left(R\cos(\varphi_{0}) \frac{\partial}{\partial x}+R\sin(\varphi_{0}) \frac{\partial}{\partial y}+ \frac{\partial}{\partial \theta}\right)
	\end{equation*}
	is a Jacobi field along $c_{v(0)}$ by definition.
	
	Moreover, the vector field $\widetilde{W}\in\mathfrak{X}(\R^{2}\times \Es^{1} \times \Es^{1})$ defined by
	\begin{equation*}
	\widetilde{W}(x,y,\theta,\varphi)=u \cdot\left(\frac{\theta-\theta_{0}}{\Omega_0}\right)\cdot \left(R\cos(\varphi_{0}) \frac{\partial}{\partial x}+R\sin(\varphi_{0}) \frac{\partial}{\partial y}+ \frac{\partial}{\partial \theta}\right)
	\end{equation*}
	extends $W(t)$ over the curve $c_{v(0)}$, that is,
	\begin{equation*}
	W(t)= (\widetilde{W}\circ c_{v(0)})(t).
	\end{equation*}
	
	However it is easy to see that $\widetilde{W}$ is not an infinitesimal symmetry of the distribution and it is not a Killing vector field with respect to the metric $g$.
\end{example}
 
 \subsection{The lift of the kinematic nonholonomic system and the nonholonomic Jacobi fields}

Denote by $g^{c}$ the complete lift of the Riemannian metric $g$ (see \eqref{def:complete:lift:g} in Appendix C). Then $g^{c}$ is a pseudo-Riemannian metric on $TQ$ and we may consider the Lagrangian function $L_{g^{c}}:TTQ\rightarrow \R$ associated with $g^{c}$. We recall that
\begin{equation*}
L_{g^{c}}=L_{g}^{c}\circ\kappa_{Q},
\end{equation*}
where $L_{g}^{c}$ is the complete lift of the Lagrangian function $L_{g}$ and $\kappa_{Q}:TTQ\rightarrow TTQ$ is the canonical involution of the double tangent bundle $TTQ$ (see Appendix C).

Now, consider the complete lift $\D^{c}$ of the distribution $\D$ as a distribution on $TQ$, whose space of sections is
\begin{equation*}
\Gamma(\D^{c})=\langle \{ X^{c},X^{v} | \ X\in\Gamma(\D)\} \rangle.
\end{equation*}
Here, $X^{c}$ and $X^{v}$ are the complete and vertical lifts of the vector field $X\in\Gamma(\D)$. The  distribution $\D^{c}$ was considered in \citep*{YaIs73}.

$\D^{c}$ is not only a vector subbundle (over $TQ$) of the vector bundle $\tau_{TQ}:TTQ\rightarrow TQ$ but also a vector bundle over $\D$ with vector bundle projection
$(T \tau_{Q})|_{\D^{c}}:\D^{c}\rightarrow\D$. In fact, if $X\in\Gamma(\D)$ then
\begin{equation*}
	(T\tau_{Q})(X^{c})=X\circ\tau_{Q}, \quad 	(T\tau_{Q})(X^{v})=0\circ\tau_{Q},
\end{equation*}
where $0:Q\rightarrow TQ$ is the zero section.

On the other hand, the tangent bundle $T\D$ to $\D$ is also a double vector bundle. Indeed, apart from the canonical vector bundle structure $\tau_{\D}:T\D\rightarrow\D$, it is also a vector bundle over $TQ$ with vector bundle projection $T(\tau_{Q}|_{\D}):T\D\rightarrow TQ$.

In addition, using that $\kappa_{Q}$ is an involution from the vector bundle $\tau_{TQ}:TTQ\rightarrow TQ$ to the vector bundle $T\tau_{Q}:TTQ \rightarrow TQ$ (see Appendix C), it follows that the restriction of $\kappa_{Q}$ to $\D^{c}\subseteq TTQ$ is also an isomorphism between the vector bundle $\tau_{\D^{c}}:\D^{c}\rightarrow TQ$ and $T(\tau_{Q}|_{\D}):T\D\rightarrow TQ$ (respectively, between $(T\tau_{Q})|_{\D^{c}}:\D^{c}\rightarrow\D$ and $\tau_{\D}:T\D\rightarrow\D$) over the identity of $TQ$ (respectively, over the identity of $\D$). The diagram in Figure \ref{Fig:Canonicalinv} illustrates the situation. Note that the inverse morphism of this double vector bundle isomorphism is $(\kappa_{Q})|_{T\D}:T\D\rightarrow \D^{c}$.
\begin{figure}[!htb]
	\begin{center}
		\begin{tikzcd}
		& TQ          &                                                                                                                              \\
		\mathcal{D}^{c} \arrow[ru, "\tau_{\mathcal{D}^{c}}"] \arrow[rr, "(\kappa_{Q})|_{\mathcal{D}^{c}}"] \arrow[rd, "(T\tau_{Q})|_{\mathcal{D}^{c}}"'] &             & T\mathcal{D} \arrow[lu, "T(\tau_{Q}|_{\mathcal{D}})"'] \arrow[ld, "\tau_{\mathcal{D}}"] \\
		& \mathcal{D} &                                                                                                                             
		\end{tikzcd}
	\end{center}
	\caption{Commutative diagram showing how the restriction of the canonical involution to $\D^{c}$ commutes with the projections to $\D$ and $TQ$.}
	\label{Fig:Canonicalinv}
\end{figure}

\begin{definition}
	The nonholonomic system $(L_{g^{c}},\D^{c})$ is the \textit{complete lift} of the nonholonomic system of kinetic type $(L_{g},\D)$.
\end{definition}

The aim of this section is to prove the following theorem:

\begin{theorem}\label{MainTheorem}
	Let $(L_{g},\D)$ be a nonholonomic system of kinetic type and $\Gamma_{(L_{g},\D)}$ the associated nonholonomic SODE. Then
	\begin{enumerate}
		\item[(i)] The complete lift $(L_{g^{c}},\D^{c})$ is a regular nonholonomic system.
		\item[(ii)] Let $\Gamma_{(L_{g^{c}},\D^{c})}\in \mathfrak{X}(\D^{c})$ be the nonholonomic SODE associated with the system $(L_{g^{c}},\D^{c})$ and $\kappa_{Q}:TTQ\rightarrow TTQ$ the canonical involution. Then
		\begin{equation}
			\Gamma_{(L_{g^{c}},\D^{c})}=T\kappa_{Q}|_{T\D} \circ \Gamma_{(L_{g},\D)}^{c} \circ \kappa_{Q}|_{\D^{c}}
		\end{equation}
		and so we have
		\begin{enumerate}
			\item[(a)] $\Gamma_{(L_{g^{c}},\D^{c})}$ is $T\tau_{Q}|_{\D^{c}}$-projectable over $\Gamma_{(L_{g},\D)}$; \\
			\item[(b)] The trajectories of $\Gamma_{(L_{g^{c}},\D^{c})}$ are just the Jacobi fields for the nonholonomic system $(L_{g},\D)$.
		\end{enumerate}		
	\end{enumerate}
\end{theorem}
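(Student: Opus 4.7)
The plan is to transport every object associated with the system $(L_{g^c},\D^c)$ on $TTQ$ (fibred over $TQ$ by $\tau_{TQ}$) to the complete lift of $(L_g,\D)$, which lives naturally on $T\D\subset TTQ$ (fibred over $TQ$ by $T\tau_Q$), using the canonical involution $\kappa_Q$ as the bridge. The two key facts driving everything are the identity $L_{g^c}=L_g^c\circ\kappa_Q$ recalled in Appendix C and the observation that $\kappa_Q|_{\D^c}\colon\D^c\to T\D$ is a double vector bundle isomorphism (Figure \ref{Fig:Canonicalinv}), together with the intertwining of $\kappa_Q$ with complete lifts of Poincar\'e-Cartan forms developed in Appendices B and C.

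For part (i), I would invoke Theorem \ref{regularityth} applied to the pseudo-Riemannian metric $g^c$: it suffices to show $\D^c\cap(\D^c)^{\perp_{g^c}}=\{0\}$. I would first establish the identification $(\D^c)^{\perp_{g^c}}=(\D^\perp)^c$ by testing orthogonality of a general tangent vector in $TTQ$ against the generators $\{X^v,X^c:X\in\Gamma(\D)\}$ of $\D^c$, using the pairing rules for $g^c$ on vertical and complete lifts recalled in Appendix C (namely $g^c(X^v,Y^v)=0$, $g^c(X^c,Y^v)=g(X,Y)\circ\tau_Q$, and the analogous expression for $g^c(X^c,Y^c)$). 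Since $g$ is Riemannian, $\D\cap\D^\perp=\{0\}$, and the complete lift of subbundles commutes with intersection, so $\D^c\cap(\D^\perp)^c=(\D\cap\D^\perp)^c=\{0\}$ and regularity follows.

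For part (ii), set $\widetilde\Gamma:=T\kappa_Q|_{T\D}\circ\Gamma^c_{(L_g,\D)}\circ\kappa_Q|_{\D^c}$ and verify that $\widetilde\Gamma=\Gamma_{(L_{g^c},\D^c)}$. First, $\widetilde\Gamma\in\mathfrak{X}(\D^c)$: by Figure \ref{Fig:Canonicalinv}, $\kappa_Q|_{\D^c}$ is a diffeomorphism onto $T\D$, $\Gamma^c_{(L_g,\D)}$ is a vector field on $T\D$, and $T\kappa_Q$ maps $TT\D$ onto $T\D^c$, so the composition is a section of $\tau_{\D^c}\colon T\D^c\to\D^c$. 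Second, the complete-lifted dynamics $\Gamma^c_{(L_g,\D)}$ on $T\D$ satisfies an analogous nonholonomic equation for the system $(L_g^c,T\D)$ (a fact established in Appendix C in the unconstrained case and extended to constraints by lifting the Lagrange-d'Alembert equations). Pulling this equation back through $\kappa_Q$ and using $L_{g^c}=L_g^c\circ\kappa_Q$ together with the intertwining identities between $\theta_L$, $\omega_L$, $E_L$ and $\kappa_Q$ provided in Appendices B and C, and the compatibility of $T\kappa_Q$ with the two annihilator distributions, transforms that equation into the defining equation \eqref{nhLvf} for $(L_{g^c},\D^c)$ at $\widetilde\Gamma$. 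Uniqueness of the solution, guaranteed by regularity from part (i), then forces $\widetilde\Gamma=\Gamma_{(L_{g^c},\D^c)}$.

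Consequence (a) follows by applying $T(T\tau_Q)|_{T\D^c}$ to the formula and using $T\tau_Q\circ\kappa_Q=\tau_{TQ}$ together with the $\tau_\D$-projectability of the complete lift $\Gamma^c_{(L_g,\D)}$ onto $\Gamma_{(L_g,\D)}$. Consequence (b) is then the defining property of the complete lift of a vector field: integral curves of $\Gamma^c_{(L_g,\D)}$ are tangent lifts of one-parameter families of integral curves of $\Gamma_{(L_g,\D)}$; conjugating back by $\kappa_Q$ and projecting to $TQ$ via $\tau_{TQ}$ produces exactly the infinitesimal variation vector fields of variations by nonholonomic trajectories, i.e., the nonholonomic Jacobi fields of Definition \ref{definition31}. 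I expect the main obstacle to be the middle step of the third paragraph: the careful verification that the complete-lifted nonholonomic equation on $T\D$ for $(L_g^c,T\D)$ transports cleanly into the nonholonomic equation on $\D^c$ for $(L_{g^c},\D^c)$ under $\kappa_Q$, since this requires every intertwining formula between Poincar\'e-Cartan forms, vertical and complete lifts, and the canonical involution developed in Appendices B and C to fit together consistently.
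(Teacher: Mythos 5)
Your proposal is correct and takes essentially the same route as the paper: part (i) via Theorem \ref{regularityth} and the $g^{c}$ pairing rules (the paper computes directly with an orthonormal basis of $\D$ rather than first identifying $(\D^{c})^{\bot}=(\D^{\bot})^{c}$, but the underlying computation is identical), and part (ii) by complete-lifting the defining Lagrange--d'Alembert equation, pulling back through $\kappa_{Q}$ using the intertwining identities of Appendices B and C (Lemma \ref{Flemma} and Proposition \ref{liftedforms}), and concluding $\widetilde{\Gamma}=\Gamma_{(L_{g^{c}},\D^{c})}$ by uniqueness from regularity. Your derivations of consequences (a) and (b) likewise coincide with the paper's arguments.
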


If $c_{v}:I\rightarrow Q$ is a trajectory of nonholonomic dynamics and $W:I\rightarrow TQ$ is a vector field on $Q$ along $c_{v}$ then an immediate corollary of this theorem is that

\begin{corollary}
	$W$ is a Jacobi field for the nonholonomic system $(L_{g},\D)$ if and only if
	\begin{enumerate}
		\item $\dot{W}(t)\in \D^{c}_{W(t)}$, for every $t\in I$;
		\item $i_{\ddot{W}}\omega_{L_{g^{c}}}(\dot{W})-dL_{g^{c}}(\dot{W})\in (S^{T})^{*}((T\D^{c})^{o})$,
	\end{enumerate}
	where {$\omega_{L_{g^{c}}}$ is the Poincar\'e-Cartan $2$-form associated with the Lagrangian function $L_{g^{c}}$ and  $S^{T}:T(TTQ)\rightarrow T(TTQ)$} is the vertical endomorphism on $TTQ$.
\end{corollary}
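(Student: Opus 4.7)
The plan is to deduce the corollary directly from Theorem \ref{MainTheorem}(ii)(b), which identifies the nonholonomic Jacobi fields of $(L_g,\D)$ with the base curves of the trajectories of the lifted nonholonomic SODE $\Gamma_{(L_{g^c},\D^c)}\in\mathfrak{X}(\D^c)$. The argument is essentially an unpacking of what it means for $W:I\to TQ$ to be such a base curve in terms of the geometric equations \eqref{nhLvf} applied to the lifted system.

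First, I would recall that Theorem \ref{MainTheorem}(i) guarantees that $(L_{g^c},\D^c)$ is a regular nonholonomic system, so $\Gamma_{(L_{g^c},\D^c)}$ is characterized as the unique vector field on $\D^c$ satisfying
\[
i_{\Gamma_{(L_{g^c},\D^c)}}\omega_{L_{g^c}} - dE_{L_{g^c}} \in \Gamma(F^o), \qquad \Gamma_{(L_{g^c},\D^c)} \in \mathfrak{X}(\D^c),
\]
with $F^o=(S^T)^*((T\D^c)^o)$ and $S^T$ the vertical endomorphism on $TTQ$. Since $L_{g^c}$ is of kinetic type with respect to the pseudo-Riemannian metric $g^c$ on $TQ$, the Liouville vector field on $TTQ$ acts on $L_{g^c}$ by multiplication by $2$, so the Lagrangian energy satisfies $E_{L_{g^c}}=L_{g^c}$; this justifies replacing $dE_{L_{g^c}}$ by $dL_{g^c}$ as in the statement.

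Next, since $\Gamma_{(L_{g^c},\D^c)}$ is a SODE on $\D^c\subseteq TTQ$, a curve $\sigma:I\to\D^c$ is an integral curve of it precisely when $\sigma=\dot W$ for some $W:I\to TQ$ and $\ddot W(t)=\Gamma_{(L_{g^c},\D^c)}(\dot W(t))$ for all $t\in I$. Evaluating the two geometric conditions above pointwise at $\dot W(t)$ and substituting this identity then produces exactly conditions (1) and (2) of the corollary: tangency of $\dot W$ to $\D^c$ reflects that $\Gamma_{(L_{g^c},\D^c)}$ lives on $\D^c$, while the second condition is the pointwise form of the first geometric equation. By Theorem \ref{MainTheorem}(ii)(b), $W$ is a Jacobi field for $(L_g,\D)$ if and only if $\dot W$ is an integral curve of $\Gamma_{(L_{g^c},\D^c)}$, which closes the equivalence.

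I do not anticipate any genuine obstacle, since the argument is a routine translation of "integral curve of a SODE on $\D^c$" into the geometric nonholonomic equations already established for the lifted system. The only points requiring care are the identification $E_{L_{g^c}}=L_{g^c}$ noted above and the observation that tangency of $\Gamma_{(L_{g^c},\D^c)}$ to $\D^c$ ensures condition (1) suffices to keep $\sigma(t)=\dot W(t)$ inside $\D^c$ for all $t$; both are immediate under the hypotheses carried over from Theorem \ref{MainTheorem}.
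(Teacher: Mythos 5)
Your proposal is correct and takes essentially the same route the paper intends: the corollary is stated there as an immediate consequence of Theorem \ref{MainTheorem}, and your argument---identifying Jacobi fields with base integral curves of $\Gamma_{(L_{g^{c}},\D^{c})}$, unpacking the defining geometric equations \eqref{nhLvf} of the lifted system pointwise along $\dot{W}$, and using that $E_{L_{g^{c}}}=L_{g^{c}}$ for a kinetic Lagrangian---is precisely that unpacking. The pointwise uniqueness needed for the converse direction is supplied by the regularity of $(L_{g^{c}},\D^{c})$ established in Theorem \ref{MainTheorem}(i), exactly as you invoke it.
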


First we show that the complete lift $(L_{g^{c}},\D^{c})$ on $TQ$ obtained from the nonholonomic system of kinetic type $(L_{g},\D)$ on $Q$ is always regular.

\begin{proposition}\label{regularityprop}
	If $L_{g}$ is the Lagrangian function associated to the Riemannian metric $g$, then the nonholonomic system $(L_{g^{c}},\D^{c})$ is regular.
\end{proposition}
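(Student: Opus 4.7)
The plan is to reduce the claim to Theorem \ref{regularityth} applied to the pseudo-Riemannian metric $g^{c}$ on $TQ$ together with the distribution $\D^{c}$. By that theorem, regularity of $(L_{g^{c}},\D^{c})$ is equivalent to the non-degeneracy condition $\D^{c}\cap(\D^{c})^{\bot}=\{0\}$, where the orthogonal is taken with respect to $g^{c}$, so it suffices to show that $g^{c}$ restricted to $\D^{c}$ is non-degenerate at every point $v_{q}\in TQ$.

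To verify this I would work in a suitably chosen local frame. Because $g$ is Riemannian, its restriction to $\D$ is positive definite, so around any $q\in Q$ there is a local $g$-orthonormal frame $\{X_{1},\dots,X_{k}\}$ of sections of $\D$, satisfying $g(X_{a},X_{b})=\delta_{ab}$. By the very definition of $\D^{c}$ (whose sections are generated by $X^{c}$ and $X^{v}$ for $X\in\Gamma(\D)$), the pointwise values of the $2k$ vectors $\{X_{a}^{c}(v_{q}),X_{a}^{v}(v_{q})\}$ form a basis of $\D^{c}_{v_{q}}$ at every $v_{q}$ in the domain of the frame.

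The key computation then uses the standard formulas characterising the complete lift of a $(0,2)$-tensor (recalled in Appendix \ref{appc}):
\[
g^{c}(X^{c},Y^{c})=(g(X,Y))^{c},\qquad g^{c}(X^{c},Y^{v})=(g(X,Y))^{v},\qquad g^{c}(X^{v},Y^{v})=0.
\]
Since $g(X_{a},X_{b})=\delta_{ab}$ is constant on $Q$, its complete lift vanishes identically and its vertical lift equals $\delta_{ab}$, so the Gram matrix of $g^{c}|_{\D^{c}}$ in the ordered basis $(X_{1}^{c},\dots,X_{k}^{c},X_{1}^{v},\dots,X_{k}^{v})$ is the constant block matrix
\[
\begin{pmatrix} 0 & I_{k}\\ I_{k} & 0\end{pmatrix},
\]
whose determinant equals $(-1)^{k}\neq 0$. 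This yields the non-degeneracy of $g^{c}|_{\D^{c}}$ at every $v_{q}\in TQ$ and, via Theorem \ref{regularityth}, the regularity of $(L_{g^{c}},\D^{c})$.

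I do not anticipate a serious obstacle: the only delicate point is bookkeeping, namely confirming that the identities above for $g^{c}$ on lifted fields indeed produce the claimed pointwise Gram matrix on $\D^{c}_{v_{q}}$, which is automatic here because in a $g$-orthonormal frame every entry of that matrix is already a constant function on $TQ$.
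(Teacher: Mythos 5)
Your proof is correct and takes essentially the same route as the paper: both reduce the claim via Theorem \ref{regularityth} to showing $\D^{c}\cap(\D^{c})^{\bot}=\{0\}$, pick a $g$-orthonormal local frame of $\D$, and apply the lift identities \eqref{def:complete:lift:g} to the frame $\{X_{a}^{c},X_{a}^{v}\}$ of $\D^{c}$. The only difference is cosmetic: where you exhibit the Gram matrix $\left(\begin{smallmatrix} 0 & I_{k}\\ I_{k} & 0\end{smallmatrix}\right)$ and its nonzero determinant, the paper pairs an arbitrary element of the intersection against arbitrary sections of $\D^{c}$ and concludes that its coefficients vanish, which is the same linear algebra.
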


\begin{proof}
	Let $Z\in\D^{c} \cap (\D^{c})^{\bot}$. Let $\{X^{a}\}$ be an orthonormal basis of sections on $\D$. The set $\{(X^{a})^{v},(X^{a})^{c}\}$ is then a basis of sections of $\D^{c}$ and $Z$ may be written as
	\begin{equation*}
		Z=\lambda_{a}(X^{a})^{c}+\mu_{a}(X^{a})^{v}.
	\end{equation*}
	Since $Z$ is in the intersection of $\D^{c}$ with its $g^{c}-$orthogonal distribution then, using \eqref{def:complete:lift:g} in Appendix C, we have that for every $Y\in\Gamma(D^{c})$ expressed as $Y=f_{a}(X^{a})^{c}+g_{a}(X^{a})^{v}$ in the same basis, 
	\begin{equation*}
		\begin{split}
			0 & =g^{c}(Z,Y) \\
			& = \lambda_{a}f_{b}(g(X^{a},X^{b}))^{c}+(\lambda_{a}g_{b}+\mu_{a}f_{b})(g(X^{a},X^{b}))^{v} \\
			& = \lambda_{a}g_{a}+\mu_{a}f_{a},
		\end{split}
	\end{equation*}
	since we are taking an orthonormal basis of $\D$. Since the functions $f_{a}$ and $g_{a}$ are arbitrary, we deduce that $\lambda_{a}=\mu_{a}=0$, hence, $Z=0$. Therefore, by Theorem \ref{regularityth} the nonholonomic system $(L_{g^{c}},\D^{c})$ is regular.
\end{proof}

The last proposition proves item (i) in Theorem \ref{MainTheorem}. Therefore, from now on we can refer to the nonholonomic SODE $\Gamma_{(L_{g^{c}},\D^{c})}$ associated with the complete lift system $(L_{g^{c}},\D^{c})$.

In order to prove item (ii) in Theorem \ref{MainTheorem} we will characterize further the distribution $\D^{c}$. Our main purpose is to identify a local basis of the distribution $(S^{T})^{*}((T\D^{c})^{o})$ along $\D^{c}$, where $S^{T}:T(TTQ)\rightarrow T(TTQ)$ is the vertical endomorphism on $TTQ$.

If $\mu$ is a 1-form on $Q$, we will denote by $\mu^{c}\in\Omega^{1}(TQ)$ and $\mu^{v}\in\Omega^{1}(TQ)$ the complete and vertical lifts, respectively, of $\mu$ to $TQ$ (see \eqref{1formcomplete} and \eqref{1formvertical}).

\begin{lemma} \label{Flemma}
	Let $\tilde{F}$ be the distribution along $\D^{c}$ defined by $\tilde{F}^{o}=(S^{T})^{*}((T\D^{c})^{o})$. 
	\begin{enumerate}
		\item Given $\D^{o}$, define a distribution $F$ along $\D$ by $F^{o}=S^{*}(T\D^{o})$. Then 
		\begin{equation*}
			\Gamma((\D^{c})^{o})=\langle \{ \mu^{c},\mu^{v} | \mu\in\Gamma(\D^{o})\} \rangle \quad \text{and} \quad F^{o}=\langle \{\mu^{v} | \mu\in\Gamma(\D^{o})\} \rangle.
		\end{equation*}
		\item $\tilde{F}^{o}=(S^{T})^{*}((T\D^{c})^{o})$ is spanned by $\tilde{F}^{o}=\langle \{ (\mu^{c})^{v},(\mu^{v})^{v} \} \rangle$. Moreover,
		\begin{equation*}
			\kappa_{Q}^{*}\tilde{F}^{o}=\langle \{ (\mu^{v})^{c},(\mu^{v})^{v}| \mu\in\Gamma(\D^{o}) \} \rangle.
		\end{equation*}
	\end{enumerate}
\end{lemma}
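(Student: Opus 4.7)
The plan is to treat the two items by the same strategy: item 1 is proved directly, and item 2 is essentially item 1 applied one level up (replacing $Q$ by $TQ$) together with a coordinate computation involving $\kappa_Q$ at the end.

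For item 1, the inclusion $\langle\mu^c,\mu^v\mid \mu\in\Gamma(\D^o)\rangle\subseteq\Gamma((\D^c)^o)$ is a direct pairing calculation against the generators $X^c,X^v$ of $\Gamma(\D^c)$ with $X\in\Gamma(\D)$: by the defining identities for the lifts of $1$-forms collected in Appendix A, each of the four pairings $\mu^v(X^c)$, $\mu^v(X^v)$, $\mu^c(X^c)$, $\mu^c(X^v)$ is either $\mu(X)\circ\tau_Q$ or $(\mu(X))^c$, and these vanish since $\mu\in\Gamma(\D^o)$. The reverse inclusion follows from the rank count $\operatorname{rank}((\D^c)^o)=2\operatorname{rank}(\D^o)$ combined with a short linear-independence check in coordinates (split the $dq$ and fiber-coordinate components). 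For $F^o$, I invoke the identity $S^{*}(d\widehat{\mu})=\mu^v$ valid for every $1$-form $\mu$ on $Q$ (the same identity that lies behind $\theta_L=S^{*}dL$): since $(T\D)^o$ is locally spanned by the differentials $d\widehat{\mu^a}$ of the fiberwise linear functions that cut out $\D$ in $TQ$, applying $S^{*}$ to this generating set produces exactly $\{(\mu^a)^v\}$.

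Item 2(a) is now item 1 applied one level higher. By item 1, $\D^c$ is locally cut out in $TQ$ as the common zero locus of the fiberwise linear functions $\widehat{\mu^c}$ and $\widehat{\mu^v}$ attached to a local basis of $\Gamma((\D^c)^o)$, so $(T\D^c)^o$ is locally spanned by the differentials $d\widehat{\mu^c}$ and $d\widehat{\mu^v}$ in $T^{*}TTQ$. The corresponding identity $(S^{T})^{*}(d\widehat{\alpha})=\alpha^v$ for a $1$-form $\alpha$ on $TQ$ (with $\alpha^v$ the vertical lift to $TTQ$) then yields the claimed generators $(\mu^c)^v$ and $(\mu^v)^v$ of $\tilde{F}^o$.

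For the description of $\kappa_Q^{*}\tilde{F}^o$, it is enough to establish the two transfer identities
\[
\kappa_Q^{*}\bigl((\mu^c)^v\bigr)=(\mu^v)^c,\qquad \kappa_Q^{*}\bigl((\mu^v)^v\bigr)=(\mu^v)^v,
\]
after which the result is immediate from item 2(a). Both are short coordinate checks using that, in the natural coordinates on $TTQ$, the involution $\kappa_Q$ swaps the two families of fiber coordinates coming from the structures $\tau_{TQ}$ and $T\tau_Q$: plugging the coordinate expression of $\mu^c$ on $TQ$ into $\tau_{TQ}^{*}$ and then applying the swap reproduces the coordinate expression of $(\mu^v)^c$, while $(\mu^v)^v=\tau_{TQ}^{*}\tau_Q^{*}\mu$ is preserved because $\tau_Q\circ\tau_{TQ}\circ\kappa_Q=\tau_Q\circ T\tau_Q$ equals the same double projection $TTQ\to Q$. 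The main obstacle is purely organizational: several distinct lift operations (vertical and complete, both at the level $Q\to TQ$ and at the level $TQ\to TTQ$) and $\kappa_Q$ enter simultaneously, so the only real care needed is to keep each symbol and each $(\cdot)^v$ or $(\cdot)^c$ interpreted at the correct level.
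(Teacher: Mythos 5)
Your proposal is correct and takes essentially the same approach as the paper: the same pairing computation against the generators $X^{c},X^{v}$ plus a dimension count for item 1, the identity $S^{*}(d\widehat{\mu})=\mu^{v}$ to obtain $F^{o}$, and item 1 applied one level up (with $(S^{T})^{*}(d\widehat{\alpha})=\alpha^{v}$) to obtain $\tilde{F}^{o}$. The only cosmetic difference is that you verify the transfer identities $\kappa_{Q}^{*}\bigl((\mu^{c})^{v}\bigr)=(\mu^{v})^{c}$ and $\kappa_{Q}^{*}\bigl((\mu^{v})^{v}\bigr)=(\mu^{v})^{v}$ by a direct coordinate computation, whereas the paper simply invokes the pre-established lift identities \eqref{codiff:can:invol} from Appendix C; both routes are equally valid.
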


\begin{proof}
	\begin{enumerate}
		\item For every $X\in\Gamma(\D)$ and $\mu\in\Gamma(\D^{o})$ we have that $\langle \mu,X \rangle=0$. Moreover, note that we have the following identities
		$$\langle \mu^{c},X^{c} \rangle=(\langle \mu,X \rangle)^{c}=0,$$
		$$\langle \mu^{c},X^{v} \rangle=(\langle \mu,X \rangle)^{v}=0,$$
		$$\langle \mu^{v},X^{c} \rangle=(\langle \mu,X \rangle)^{v}=0,$$
		$$\langle \mu^{v},X^{v} \rangle=0,$$	
		hence the elements in $\{ \mu^{c},\mu^{v} \}$ annihilate $\D^{c}$. Therefore, by dimensional reasons they must span $(\D^{c})^{o}$.
		
		It is also easy to show that $(T\D)^{o}$ is spanned by the 1-forms $d\widehat{\mu}$, where $\widehat{\mu}$ is the fiberwise linear function associated to $\mu$. Hence, using the fact that $S^{*}(d\widehat{\mu})=\mu^{v}$ (see \eqref{S*lifts} in Appendix A), it follows that 1-forms of the form $\mu^{v}$ span $F^{o}$.
		\item On one hand, to see that $\tilde{F}^{o}$ is generated by the elements of the form $(\mu^{c})^{v}$ and $(\mu^{v})^{v}$ is a direct application of the previous item. Of course, we have that
		\begin{equation*}
			(T\D^{c})^{o}=\langle \{d\widehat{(\mu^{c})}, d\widehat{(\mu^{v})} | \mu\in\Gamma(\D^{o})\} \rangle
		\end{equation*}
		and then we may use \eqref{S*lifts} in Appendix A. Here, $\widehat{(\mu^{c})}$ and $\widehat{(\mu^{v})}$ are the fiberwise linear functions on $TTQ$ induced by the 1-forms $\mu^{c}$ and $\mu^{v}$. Hence, we obtain
		\begin{equation*}
			\tilde{F}^{o}=\langle \{ (\mu^{c})^{v},(\mu^{v})^{v} \} \rangle.
		\end{equation*}
		The last part of the Lemma follows using \eqref{codiff:can:invol}.
		
	\end{enumerate}
\end{proof}

\begin{proof}[Proof of Theorem \ref{MainTheorem}]
	By Proposition \ref{regularityprop}, the complete lift nonholonomic system $(L_{g^{c}},\D^{c})$ is regular. The nonholonomic vector field $\Gamma_{(L_{g},\D)}$ is defined by the equations
		\begin{equation*}
			\left\{\begin{array}{l}
				\left. \left( i_{\Gamma_{(L_{g},\D)}}\omega_{L_{g}}-dE_{L_{g}} \right) \right|_{\D} \in \Gamma(S^*((T\D)^{o})) \\
				\Gamma_{(L_{g},\D)} \in {\frak X}(\D).
			\end{array}\right.
		\end{equation*}
	Using the complete lift and \eqref{differential:completelift} and \eqref{inner:complete} in Appendix A, we can obtain the following equation
		\begin{equation*}
			\left. \left( i_{\Gamma_{(L_{g},\D)}^{c}}\omega_{L_{g}}^{c}-dE_{L_{g}}^{c} \right) \right|_{T\D} \in \Gamma((S^*((T\D)^{o}))^{c})
		\end{equation*}
	If we pullback the previous equation by $\kappa_{Q}$ and using the characterization of $\tilde{F}$ given in Lemma \ref{Flemma} we deduce that
	\begin{equation*}
		\left. \left( i_{(\kappa_{Q})_{*}\Gamma_{(L_{g},\D)}^{c}}\kappa_{Q}^{*}\omega_{L_{g}}^{c}-d(\kappa_{Q}^{*}E_{L_{g}}^{c}) \right) \right|_{\D^{c}} \in \Gamma(\tilde{F}^{o})
	\end{equation*}
	Applying Lemma \ref{liftedforms} in Appendix C, the equation reduces to 
	\begin{equation*}
		\left. \left( i_{(\kappa_{Q})_{*}\Gamma_{(L_{g},\D)}^{c}}\omega_{L_{g^{c}}}-d(E_{L_{g^{c}}}) \right) \right|_{\D^{c}} \in \Gamma(F^{o})
	\end{equation*}
	Notice that since $\Gamma_{(L_{g},\D)}$ is a vector field in the submanifold  $\D$, its complete lift satisfies $\Gamma_{(L_{g},\D)}^{c}\in\mathfrak{X}(T\D)$. Therefore we may form the commutative diagram below
	\begin{figure}[!htb]
		\begin{center}
			\begin{tikzcd}
				T\mathcal{D}^{c} \arrow[rr, "T\kappa_{Q}|_{T\D}"] \arrow[dd, "\tau_{TTQ}"]                                                                       && TT\mathcal{D} \arrow[dd, "\tau_{TTQ}"']                                 \\
				\\
				\mathcal{D}^{c} \arrow[rr, "\kappa_{Q}|_{\mathcal{D}^{c}}"] \arrow[uu, "{(\kappa_{Q}^{-1})_{*}\Gamma_{(L_g,\mathcal{D})}^{c}}", bend left=49] && T\mathcal{D} \arrow[uu, "{\Gamma_{(L_g,\mathcal{D})}^{c}}"', bend right=49]
			\end{tikzcd}
		\end{center}
	\end{figure}

	Hence, $(\kappa_{Q})_{*}\Gamma_{(L_{g},\D)}^{c}$ is a vector field on $\D^{c}$. Moreover, since the nonholonomic system $(L_{g^{c}},\D^{c})$ is regular, by uniqueness of nonholonomic vector field, it coincides with $\Gamma_{(L_{g^{c}},\D^{c})}$, i.e.,
	\begin{equation}
	\Gamma_{(L_{g^{c}},\D^{c})}=T\kappa_{Q}|_{T\D} \circ \Gamma_{(L_{g},\D)}^{c} \circ \kappa_{Q}|_{\D^{c}}.
	\end{equation}

	Then the statements in item $(ii)$ are just consequences of the properties of the complete lift and the canonical involution. Indeed,
	\begin{equation*}
		\begin{split}
			TT\tau_{Q}(\Gamma_{(L_{g^{c}},\D^{c})}) & =T(T\tau_{Q}\circ\kappa_{Q}|_{T\D}) \circ \Gamma_{(L_{g},\D)}^{c} \circ \kappa_{Q}|_{\D^{c}} \\
			& = T(\tau_{TQ}|_{T\D})(\Gamma_{(L_{g},\D)}^{c} \circ \kappa_{Q}|_{\D^{c}}) \\
			& = \Gamma_{(L_{g},\D)}\circ \tau_{TQ}|_{T\D} \circ\kappa_{Q}|_{\D^{c}} \\
			& = \Gamma_{(L_{g},\D)},
		\end{split}
	\end{equation*}
	where we have used that $\tau_{TQ} \circ\kappa_{Q}(\D^{c})=\D$. This proves the first statement.
	
	The second statement in item $(ii)$, may be seen from the fact that if $W:I\rightarrow TQ$ is a trajectory of $\Gamma_{(L_{g^{c}},\D^{c})}$, then its tangent lift $\dot{W}:I\rightarrow \D^{c}$ is an integral curve of $\Gamma_{(L_{g^{c}},\D^{c})}$ and, thus, $\kappa_{Q}\circ\dot{W}:I\rightarrow T\D$ is an integral curve of $\Gamma_{(L_{g},\D)}^{c}$. Therefore we may write it as
	\begin{equation*}
		\kappa_{Q}\circ\dot{W}(t)=\left( T_{W(0)} \phi_{t}^{\Gamma_{(L_{g},\D)}} \right) (\kappa_{Q}\circ\dot{W}(0)).
	\end{equation*}
	So,
	\begin{equation*}
		W(t)=T\tau_{Q}(\kappa_{Q}\circ\dot{W})=T\tau_{Q}\left( \left( T_{W(0)} \phi_{t}^{\Gamma_{(L_{g},\D)}} \right) (\kappa_{Q}\circ\dot{W}(0)) \right)
	\end{equation*}
	and
	\begin{equation*}
		W(t)= \left( T_{W(0)} (\tau_{Q}\circ \phi_{t}^{\Gamma_{(L_{g},\D)}}) \right) (\kappa_{Q}\circ\dot{W}(0)).
	\end{equation*}
	Let now $v:I\rightarrow \D$ be a curve such that its initial velocity is $v'(0)=\kappa_{Q}\circ\dot{W}(0)$. Then
	\begin{equation*}
	W(t)= \left. \frac{d}{ds} \right|_{s=0} \left( \tau_{Q}\circ \phi_{t}^{\Gamma_{(L_{g},\D)}}\right) (v(s)).
	\end{equation*}
	Hence, $W$ is a nonholonomic Jacobi field for $\Gamma_{(L_{g},\D)}$, since it is an infinitesimal variation of nonholonomic trajectories of $\Gamma_{(L_{g},\D)}$.	
\end{proof}

\begin{remark}
	As a consequence of the last theorem if $W:I\longrightarrow TQ$ is a Jacobi field for the nonholonomic dynamics $(L_{g},\D)$ it must satisfy the constraint:
	\begin{equation*}
	\dot{W}\in D^{c}.
	\end{equation*}
\end{remark}

\begin{example}
	Let us check that the lifted nonholonomic system obtained from the nonholonomic particle is regular.
	
	By Theorem \ref{regularityth} it is enough to check that $\D^{c}\cap (\D^{c})^{\bot}=\{0\}$. This is equivalent to show that the matrix $C^{a b}$ defined in \eqref{matrixC} is non-singular at points of $\D^{c}$. If we were to compute this matrix we would find it was
	\begin{equation*}
		\begin{pmatrix}
			0 & y^{2}+1 \\
			y^{2}+1 & 2vy
		\end{pmatrix}
	\end{equation*}
	which is clearly non-singular.
	
	In this example the constraint distribution $\D$ is generated by the vectors $e_{1}=\frac{\partial}{\partial x}+y\frac{\partial}{\partial z}$ and $e_{2}=\frac{\partial}{\partial y}$. The orthogonal distribution $\D^{\bot}$ for the euclidean metric is generated by $e_{3}=y\frac{\partial}{\partial x}-\frac{\partial}{\partial z}$.
	
	The lifted distribution $\D^{c}$, by definition, is generated by the vectors $e_{1}^{c},e_{2}^{c},e_{1}^{v},e_{2}^{v}$. The set $\{e_{3}^{c},e_{3}^{v}\}$ is linearly independent and it is easily proven to be $g^{c}-$orthogonal to $\D^{c}$, hence, by dimensional reasons, it generates the orthogonal distribution $(\D^{c})^{\bot}$.
	
	Moreover, since $\{e_{1}^{c},e_{2}^{c},e_{1}^{v},e_{2}^{v},e_{3}^{c},e_{3}^{v}\}$ is a basis of sections of $\mathfrak{X}(TQ)$, the intersection of $\D^{c}$ and $(\D^{c})^{\bot}$ must be zero.	
\end{example}


\subsection{Nonholonomic Jacobi equation}

Theorem \ref{nhgeodesicth} asserts that if $c_{v}:I\rightarrow Q$ is a trajectory of $\Gamma_{(L_{g}, \D)}$, then 
\begin{equation*}
\nabla^{nh}_{\dot{c}_{v}}\dot{c}_{v}=0 \quad \text{and} \quad \dot{c}_{v}\in\D.
\end{equation*}

Consider the complete lift of the metric $g$ denoted by $g^{c}$, which is a symmetric non-degenerate $(0,2)$-tensor on $TQ$. The kinetic Lagrangian $L_{g^{c}}$ associated to $g^{c}$ satisfies $L_{g}^{c}\circ \kappa_{Q}=L_{g^{c}}$ (see Lemma \ref{liftedLagrangianlemma} in Appendix C). Moreover, $(L_{g^{c}},\D^{c})$ is a regular nonholonomic system.

Since the Lagrangian function is kinetic, its trajectories are geodesics of the connection $\nabla^{NH}$ defined by
\begin{equation}
\nabla^{NH}_{X} Y:=P^{T}(\nabla_{X}^{g^{c}} Y)+\nabla^{g^{c}}_{X}[P'^{T}(Y)], \quad \text{for} \ X,Y \in \mathfrak{X}(TQ),
\end{equation}
where $\nabla^{g^{c}}$ is the Levi-Civita connection of $g^{c}$, $P^{T}:TTQ\rightarrow \D^{c}$ is the associated  orthogonal projector onto the distribution $\D^{c}$ and $P'^{T}:TTQ\rightarrow (\D^{c})^{\bot}$ is the orthogonal projector onto $(\D^{c})^{\bot}$, the orthogonal distribution.

\begin{lemma}\label{Projectorlift}
The following identities are satisfied:
	\begin{enumerate}
		\item $\nabla^{g^{c}}=(\nabla^{g})^{c}$;
		\item $\kappa_{Q}\circ TP\circ \kappa_{Q}(X^{c})=(P(X))^{c}$, for any $X\in \mathfrak{X}(Q)$;
		\item $\kappa_{Q}\circ TP\circ \kappa_{Q}(X^{v})=(P(X))^{v}$, for any $X\in \mathfrak{X}(Q)$;
		\item $P^{T}=\kappa_{Q}\circ TP\circ \kappa_{Q}$;
		\item $P'^{T}=\kappa_{Q}\circ TP'\circ \kappa_{Q}$.
	\end{enumerate}
\end{lemma}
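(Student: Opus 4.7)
My plan is to establish the five identities in the order stated. For identity (1), I would invoke the uniqueness of the Levi-Civita connection: it suffices to verify that $(\nabla^g)^c$, the Yano--Ishihara complete lift of $\nabla^g$ to $TQ$, is torsion-free and compatible with $g^c$. Both properties will reduce to the corresponding properties of $\nabla^g$ with respect to $g$ on evaluating against complete and vertical lifts, by combining the defining rules of the complete-lift connection with the bilinear characterization of $g^c$ on such pairs and the standard identities $[X^c,Y^c] = [X,Y]^c$, $X^c(f^c)=(X(f))^c$, $X^c(f^v)=(X(f))^v$.

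For identities (2) and (3), the key ingredient for (2) is the relation $X^c = \kappa_Q \circ TX$ from Appendix A, which gives
\begin{equation*}
(\kappa_Q \circ TP \circ \kappa_Q)(X^c) = \kappa_Q \circ TP \circ TX = \kappa_Q \circ T(P \circ X) = (P \circ X)^c = (P(X))^c.
\end{equation*}
Identity (3) lacks such a clean expression, since the vertical lift is not a ``tangent lift'' in the same sense; I would verify it by a short coordinate computation, using the fiberwise-linear form $P(q,\dot q) = P^i_j(q)\dot q^j\partial/\partial q^i$ of the projector together with the explicit formula for $\kappa_Q$ in natural coordinates on $TTQ$.

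For identity (4), set $A := \kappa_Q \circ TP \circ \kappa_Q$. Then $A^2 = A$ since $P^2 = P$, and $A$ has image in $\D^c$ because $\mathrm{Im}(TP) \subseteq T\D$ and $\kappa_Q(T\D) = \D^c$ by the double vector bundle isomorphism of Figure \ref{Fig:Canonicalinv}. Identities (2) and (3) then show that $A$ restricts to the identity on $\D^c$ (since $P|_\D = \mathrm{id}_\D$) and annihilates the complete and vertical lifts of sections of $\D^\bot$ (since $P|_{\D^\bot} = 0$). The step I expect to be the conceptual crux is the auxiliary identification $(\D^c)^\bot = (\D^\bot)^c$: the inclusion $\supseteq$ is immediate from the bilinear formulas for $g^c$, since $g(X,Y)=0$ for $X \in \Gamma(\D)$ and $Y \in \Gamma(\D^\bot)$ forces all four pairings of their complete and vertical lifts to vanish, and equality then follows by a fiber-dimension count using $\D \oplus \D^\bot = TQ$. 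Granted this, $A$ is a projector with image $\D^c$ and kernel $(\D^c)^\bot$, which uniquely identifies it as $P^T$. Identity (5) will then follow by the same argument applied verbatim to $P'$ and $\D^\bot$ in place of $P$ and $\D$.
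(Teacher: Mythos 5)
Your proposal is correct and follows essentially the same route as the paper: identity (2) via $\kappa_Q\circ X^c = TX$, identity (3) by a separate direct computation, and identities (4)--(5) by showing the operator $\kappa_Q\circ TP\circ\kappa_Q$ acts as the identity on the lifts spanning $\D^c$ and annihilates the lifts spanning $(\D^c)^\bot$. The only notable differences are in the fine detail: for (1) the paper simply cites Corollary 2.6.6 of the de~Le\'on--Rodrigues reference, whereas you re-derive it from uniqueness of the Levi-Civita connection (valid, since $g^c$ is non-degenerate and torsion-freeness and compatibility are tensorial, hence checkable on complete and vertical lifts); for (3) the paper uses an intrinsic computation through the zero section rather than coordinates, both of which work; and for (4) you explicitly justify the identification $(\D^\bot)^c=(\D^c)^\bot$ by the inclusion-plus-rank-count argument, a point the paper asserts without proof, so your write-up is if anything slightly more complete there.
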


\begin{proof}
	The first item is proved in Corollary 2.6.6. in \citep*{LR89}. To prove item 2, just use the properties of the canonical involution in Appendix C (see \eqref{int:def:can:invol} in Appendix C)
	\begin{equation*}
		\kappa_{Q}\circ TP\circ \kappa_{Q}(X^{c})=\kappa_{Q}\circ TP (TX)=\kappa_{Q}(T(P\circ X))=(P(X))^{c}.
	\end{equation*}
	We may prove item 3 in a similar way. Given $u_{q}\in T_{q}Q$, we have
	\begin{equation*}
		\begin{split}
			\kappa_{Q}\circ TP\circ \kappa_{Q}(X^{v})(u_{q}) & =(\kappa_{Q}\circ TP)(T_{q}0(u_{q})+(X(q))^{v}|_{0_{q}}) \\
			 & =(\kappa_{Q}\circ TP)(T_{q}0(u_{q})+\left.\frac{d}{dt}\right|_{t=0}(tX(q))) \\
			 & = \kappa_{Q}(T_{q}0(u_{q})+\left.\frac{d}{dt}\right|_{t=0}(tPX(q))) \\
			 & = (PX)^{v}(u_{q}).
		\end{split}
	\end{equation*}
	As a consequence of the two previous items we have that
	\begin{equation*}
	\begin{split}
		&\kappa_{Q}\circ TP\circ \kappa_{Q}(X^{c})=X^{c}, \quad \kappa_{Q}\circ TP\circ \kappa_{Q}(X^{v})=X^{v}, \quad X\in \Gamma(\D) \\
		&\kappa_{Q}\circ TP\circ \kappa_{Q}(Y^{c})=0, \quad \kappa_{Q}\circ TP\circ \kappa_{Q}(Y^{v})=0, \quad Y\in \Gamma(\D^{\bot}).
	\end{split}
	\end{equation*}
	Note that while $\{X^{c},X^{v}| X\in \Gamma(\D)\}$ spans $\Gamma(\D^{c})$, the set $\{Y^{c},Y^{v}| Y\in \Gamma(\D^{\bot})\}$ spans $\Gamma((\D^{c})^{\bot})$, where the orthogonal is taken with respect to the pseudo-Riemannian metric $g^{c}$. Hence, $\kappa_{Q}\circ TP\circ \kappa_{Q}$ is the identity on $\D^{c}$ and vanishes on $(\D^{c})^{\bot}$. Therefore, it must be the orthogonal projector $P^{T}$.
	
	The argument to prove item 5. is completely analogous, just substitute $P$ by $P'$.
\end{proof}

The last Lemma simplifies the proof of the next Proposition, relating both nonholonomic connections by the complete lift. Before, the statement let us recall some properties of the complete lift of a linear connection $\nabla$ (see \citep*{LR89} or \citep*{YaIs73}):
\begin{equation}\label{connectionlift}
	\nabla^{c}_{X^{c}}Y^{c}=(\nabla_{X}Y)^{c}, \quad \nabla^{c}_{X^{c}}Y^{v}=\nabla^{c}_{X^{v}}Y^{c}=(\nabla_{X}Y)^{v}, \quad \nabla^{c}_{X^{v}}Y^{v}=0,
\end{equation}
for any $X, Y \in \mathfrak{X}(Q)$.

\begin{proposition}\label{completeconnection}
The nonholonomic connection constructed from the Levi-Civita connection associated to $g^{c}$ and from the projectors $P^{T}$, $P'^{T}$ is the complete lift of the nonholonomic connection constructed from the Levi-Civita for $g$ and from the projector $P$, and $P'$. In other words,
\begin{equation*}
	\nabla^{NH}=(\nabla^{nh})^{c}.
\end{equation*}
\end{proposition}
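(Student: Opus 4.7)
The plan is to check the identity on a local generating set of vector fields on $TQ$. Both $\nabla^{NH}$ and $(\nabla^{nh})^{c}$ are $C^{\infty}(TQ)$-linear in the first slot and satisfy the Leibniz rule in the second, so they are determined by their values on the complete and vertical lifts $X^{c}, X^{v}$ of vector fields $X\in\mathfrak{X}(Q)$, which locally span $\mathfrak{X}(TQ)$ as a $C^{\infty}(TQ)$-module.

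First, I would assemble the ingredients. Item 1 of Lemma \ref{Projectorlift} supplies the identity $\nabla^{g^{c}}=(\nabla^{g})^{c}$, while items 2--5 of the same lemma give the projector-lift identities
\begin{equation*}
P^{T}(X^{c})=(PX)^{c},\quad P^{T}(X^{v})=(PX)^{v},\quad P'^{T}(X^{c})=(P'X)^{c},\quad P'^{T}(X^{v})=(P'X)^{v}.
\end{equation*}
Combining these with the formulas \eqref{connectionlift} for the complete lift of a connection applied to $\nabla^{g}$, for $X,Y\in\mathfrak{X}(Q)$ I would compute, for instance,
\begin{align*}
\nabla^{NH}_{X^{c}}Y^{c}&=P^{T}\bigl((\nabla^{g})^{c}_{X^{c}}Y^{c}\bigr)+(\nabla^{g})^{c}_{X^{c}}\bigl[(P'Y)^{c}\bigr]\\
&=P^{T}\bigl((\nabla^{g}_{X}Y)^{c}\bigr)+(\nabla^{g}_{X}P'Y)^{c}\\
&=\bigl(P(\nabla^{g}_{X}Y)\bigr)^{c}+(\nabla^{g}_{X}P'Y)^{c}=(\nabla^{nh}_{X}Y)^{c},
\end{align*}
which by \eqref{connectionlift} coincides with $(\nabla^{nh})^{c}_{X^{c}}Y^{c}$. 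The three remaining pairs $(X^{c},Y^{v})$, $(X^{v},Y^{c})$, $(X^{v},Y^{v})$ are handled identically, producing $(\nabla^{nh}_{X}Y)^{v}$, $(\nabla^{nh}_{X}Y)^{v}$, and $0$, respectively, each of which matches the corresponding value of $(\nabla^{nh})^{c}$ prescribed by \eqref{connectionlift}.

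The main obstacle I foresee is essentially bookkeeping: ensuring that the additive decomposition into $P$- and $P'$-contributions passes cleanly through the complete lift in all four cases, in particular that the second summand $\nabla^{g^{c}}_{X^{\sharp}}[P'^{T}(Y^{\flat})]$ really evaluates to $(\nabla^{g}_{X}P'Y)^{\sharp\cdot\flat}$ with the correct lift type. Once the projector-lift identities and the equality $\nabla^{g^{c}}=(\nabla^{g})^{c}$ are in hand, no curvature, torsion, or variational argument is needed; the verification reduces to a routine application of the lifting rules \eqref{connectionlift} together with the linearity of complete and vertical lifts.
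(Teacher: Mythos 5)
Your proposal is correct and follows essentially the same route as the paper's own proof: both verify the identity on the complete and vertical lifts $X^{c}, X^{v}$ using Lemma \ref{Projectorlift} together with the lifting rules \eqref{connectionlift}, with the four cases reducing to the same elementary computations. Your preliminary remark that the difference of the two connections is tensorial (so that checking on a spanning set suffices) is a small extra justification the paper leaves implicit, but it does not change the argument.
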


\begin{proof}
	We will prove the identity on complete and vertical lifts. Using the definition of $\nabla^{NH}$ we get
	\begin{equation*}
		\nabla^{NH}_{X^{c}}Y^{c} =P^{T}(\nabla^{g})^{c}_{X^{c}} Y^{c}+(\nabla^{g})^{c}_{X^{c}}[P'^{T}(Y^{c})].
	\end{equation*}
	Using the properties stated in equations \eqref{connectionlift} and in Lemma \ref{Projectorlift} we deduce
	\begin{equation*}
		\nabla^{NH}_{X^{c}}Y^{c} = P^{T}(\nabla^{g}_{X}Y)^{c}+(\nabla^{g})^{c}_{X^{c}}(P'Y)^{c}.
	\end{equation*}
	Again applying the same combination of arguments we may reduce the previous line to
	\begin{equation*}
		\nabla^{NH}_{X^{c}}Y^{c} = (P\nabla^{g}_{X}Y)^{c}+(\nabla^{g}_{X}P'Y)^{c},
	\end{equation*}
	which is just the complete lift of $\nabla^{nh}$. So,
	\begin{equation*}
		\nabla^{NH}_{X^{c}}Y^{c} = (\nabla^{nh}_{X}Y)^{c}=(\nabla^{nh})^{c}_{X^{c}}Y^{c}.
	\end{equation*}
	The very same arguments are still valid to prove
	\begin{equation*}
		\begin{split}
			\nabla^{NH}_{X^{c}}Y^{v} & =P^{T}(\nabla^{g})^{c}_{X^{c}} Y^{v}+(\nabla^{g})^{c}_{X^{c}}[P'^{T}(Y^{v})] \\
			& = P^{T}(\nabla^{g}_{X}Y)^{v}+(\nabla^{g})^{c}_{X^{c}}(P'Y)^{v} \\
			& = (P\nabla^{g}_{X}Y)^{v}+(\nabla^{g}_{X}P'Y)^{v} \\
			& = (\nabla^{nh}_{X}Y)^{v}=(\nabla^{nh})^{c}_{X^{c}}Y^{v},
		\end{split}
	\end{equation*}
	and also to prove
	\begin{equation*}
	\begin{split}
	\nabla^{NH}_{X^{v}}Y^{v} & =P^{T}(\nabla^{g})^{c}_{X^{v}} Y^{v}+(\nabla^{g})^{c}_{X^{v}}[P'^{T}(Y^{v})] \\
	& = (\nabla^{g})^{c}_{X^{v}}(P'Y)^{v}= 0=(\nabla^{nh})^{c}_{X^{v}}Y^{v}.
	\end{split}
	\end{equation*}
	
\end{proof}

\begin{remark}
	If $W:I\rightarrow TQ$ is a trajectory of the nonholonomic system $(L_{g^{c}},\D^{c})$, it is also by Theorem \ref{MainTheorem} a Jacobi field for the nonholonomic system $(L_{g},\D)$, and it is a geodesic for the nonholonomic connection $\nabla^{NH}$ by Theorem \ref{nhgeodesicth}. Hence, by the last proposition $W$ satisfies
	\begin{equation*}
	(\nabla^{nh})^{c}_{\dot{W}}\dot{W}=0.
	\end{equation*}
\end{remark}

\begin{proposition} \label{complete:connection:coordinates}
	Let $W:I\rightarrow TQ$ be a vector field along $c:I\rightarrow Q$, a nonholonomic trajectory of $\Gamma_{(L_{g}, \D)}$. Then the coordinate expression of $(\nabla^{nh})^{c}_{\dot{W}}\dot{W}$ is
	\begin{equation}\label{liftedgeo}
		(\nabla^{nh})^{c}_{\dot{W}}\dot{W}=\left(\frac{d^{2} W^{k}}{dt^{2}}+\dot{q}^{i}\dot{q}^{j}W^{l}\frac{\partial \Gamma_{ij}^{k}}{\partial q^{l}}+\dot{q}^{j}\frac{d W^{i}}{dt}(\Gamma_{ij}^{k}+\Gamma_{ji}^{k})\right)\frac{\partial}{\partial \dot{q}^{k}},
	\end{equation}
	where $(q^{i})$ are local coordinates on $Q$ with respect to which the local expression of $W$ is
	\begin{equation*}
		W(t)=W^{i}(t)\left. \frac{\partial}{\partial q^{i}}\right|_{c(t)},
	\end{equation*}
	$(q^{i},\dot{q}^{i})$ is the corresponding local expression of $\dot{c}$ on $TQ$ and $\Gamma_{ij}^{k}$ are the Chrystoffel symbols for the nonholonomic connection $\nabla^{nh}$, i.e.,
	\begin{equation*}
		\nabla^{nh}_{\frac{\partial}{\partial q^{i}}} \frac{\partial}{\partial q^{j}}=\Gamma_{ij}^{k}\frac{\partial}{\partial q^{k}}.
	\end{equation*}
\end{proposition}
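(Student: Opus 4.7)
The plan is to compute $(\nabla^{nh})^{c}_{\dot W}\dot W$ directly in the natural coordinates on $TTQ$ by first reading off the Christoffel symbols of the complete-lift connection in the frame $\{\frac{\partial}{\partial q^{i}},\frac{\partial}{\partial \dot q^{i}}\}$ on $TQ$ and then inserting the expansion
\[
\dot W(t) = \dot c^{i}(t)\,\frac{\partial}{\partial q^{i}}\Big|_{W(t)} + \frac{dW^{i}}{dt}(t)\,\frac{\partial}{\partial \dot q^{i}}\Big|_{W(t)}
\]
into the standard coordinate formula for the covariant derivative along a curve. The observation that makes this essentially mechanical is that, for the coordinate vector fields, $(\tfrac{\partial}{\partial q^{i}})^{c}=\tfrac{\partial}{\partial q^{i}}$ and $(\tfrac{\partial}{\partial q^{i}})^{v}=\tfrac{\partial}{\partial \dot q^{i}}$ as vector fields on $TQ$, so the defining relations \eqref{connectionlift} apply directly to the natural frame.

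Using \eqref{connectionlift} together with the complete-lift identity $(fX)^{c}=fX^{c}+\widehat{df}\,X^{v}$, applied to $\nabla^{nh}_{\partial/\partial q^{i}}\partial/\partial q^{j}=\Gamma^{k}_{ij}\,\partial/\partial q^{k}$, one obtains
\[
(\nabla^{nh})^{c}_{\frac{\partial}{\partial q^{i}}}\frac{\partial}{\partial q^{j}}=\Gamma^{k}_{ij}\,\frac{\partial}{\partial q^{k}}+\dot q^{l}\,\frac{\partial \Gamma^{k}_{ij}}{\partial q^{l}}\,\frac{\partial}{\partial \dot q^{k}},
\]
while the three remaining combinations give $(\nabla^{nh})^{c}_{\partial/\partial q^{i}}\partial/\partial \dot q^{j}=(\nabla^{nh})^{c}_{\partial/\partial \dot q^{i}}\partial/\partial q^{j}=\Gamma^{k}_{ij}\,\partial/\partial \dot q^{k}$ and $(\nabla^{nh})^{c}_{\partial/\partial \dot q^{i}}\partial/\partial \dot q^{j}=0$. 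This completely determines the Christoffel symbols of $(\nabla^{nh})^{c}$ in the natural frame on $TQ$.

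Inserting these into the generic coordinate formula $(\nabla_{\dot\gamma}\dot\gamma)^{A}=\ddot\gamma^{A}+\Gamma^{A}_{BC}\,\dot\gamma^{B}\dot\gamma^{C}$ applied to $\gamma = W:I\to TQ$, the $\frac{\partial}{\partial q^{k}}$-component reduces to $\ddot c^{k}+\Gamma^{k}_{ij}\,\dot c^{i}\dot c^{j}$, which vanishes because $c$ is a nonholonomic trajectory and hence $\nabla^{nh}_{\dot c}\dot c=0$ by Theorem \ref{nhgeodesicth} (the skew part of $\Gamma^{k}_{ij}$ drops out on contraction with the symmetric product $\dot c^{i}\dot c^{j}$). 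The $\frac{\partial}{\partial \dot q^{k}}$-component collects $\ddot W^{k}$, the term $W^{l}\dot c^{i}\dot c^{j}(\partial \Gamma^{k}_{ij}/\partial q^{l})$ coming from the quadratic part of the complete-lift Christoffels (evaluated along $W$, where the fibre coordinate $\dot q^{l}$ equals $W^{l}$), and the two mixed contributions $\Gamma^{k}_{ij}\dot c^{i}\dot W^{j}$ and $\Gamma^{k}_{ij}\dot W^{i}\dot c^{j}$; renaming indices in one of them brings them together as $(\Gamma^{k}_{ij}+\Gamma^{k}_{ji})\dot c^{j}(dW^{i}/dt)$, reproducing \eqref{liftedgeo} exactly.

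The whole argument is a direct index computation; the only point requiring real care is that the Christoffel symbols $\Gamma^{k}_{ij}$ of the nonholonomic connection need not be symmetric in $(i,j)$, so one must keep the two mixed cross-terms distinct until the final step rather than silently symmetrizing them earlier. This is where attention is needed, but it is not a genuine obstacle.
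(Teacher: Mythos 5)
Your proposal is correct and follows essentially the same route as the paper's proof: you compute the Christoffel symbols of $(\nabla^{nh})^{c}$ in the natural frame $\{\frac{\partial}{\partial q^{i}},\frac{\partial}{\partial \dot q^{i}}\}$ via the lift identities \eqref{connectionlift} (using that the coordinate fields are themselves complete and vertical lifts), insert the decomposition of $\dot W$ into the covariant-derivative-along-a-curve formula, and discard the $\frac{\partial}{\partial q^{k}}$-component because $c$ is a $\nabla^{nh}$-geodesic by Theorem \ref{nhgeodesicth}. Your explicit use of $(fX)^{c}=f^{v}X^{c}+\widehat{df}\,X^{v}$ and the cautionary remark about keeping the two mixed cross-terms distinct until the final symmetrization are just slightly more detailed versions of steps the paper leaves implicit.
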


\begin{proof}
Denote by $\dot{W}:I\rightarrow TTQ$ the tangent lift of $W:I\rightarrow TQ$. Then we have that
	\begin{equation*}
		\dot{W}(t)=\dot{q}^{i}(t)\left.\frac{\partial}{\partial q^{i}}\right|_{W(t)}+\dot{W}^{i}(t)\left.\frac{\partial}{\partial \dot{q}^{i}}\right|_{W(t)}.
	\end{equation*}
Observe that the coordinate vector fields on $TQ$, denoted by $\frac{\partial}{\partial q^{i}}$ and $\frac{\partial}{\partial \dot{q}^{i}}$ are just the complete and the vertical lift of the corresponding vector field on $Q$, i.e.,
	\begin{equation*}
		\frac{\partial}{\partial q^{i}}(v_{q})=\left(\frac{\partial}{\partial q^{i}}\right)^{c}(v_{q}), \quad \frac{\partial}{\partial \dot{q}^{i}}(v_{q})=\left(\frac{\partial}{\partial q^{i}}\right)^{v}(v_{q}).
	\end{equation*}
With these properties in mind it is easy to prove that,
	\begin{equation*}
		\begin{split}
			& (\nabla^{nh})^{c}_{\frac{\partial}{\partial q^{i}}}\frac{\partial}{\partial q^{j}} = \Gamma_{ij}^{k}\frac{\partial}{\partial q^{k}}+\dot{q}^{l}\frac{\partial \Gamma_{ij}^{k}}{\partial q^{l}}\frac{\partial}{\partial \dot{q}^{k}} \\
			& (\nabla^{nh})^{c}_{\frac{\partial}{\partial \dot{q}^{i}}}\frac{\partial}{\partial q^{j} } = (\nabla^{nh})^{c}_{\frac{\partial}{\partial q^{i}}}\frac{\partial}{\partial \dot{q}^{j}} = \Gamma_{ij}^{k}\frac{\partial}{\partial \dot{q}^{k}}\\
			& (\nabla^{nh})^{c}_{\frac{\partial}{\partial \dot{q}^{i}}}\frac{\partial}{\partial \dot{q}^{j}} = 0.
		\end{split}
	\end{equation*} 

And without further ado, one can also compute $(\nabla^{nh})^{c}_{\dot{W}}\dot{W}$ to be
	\begin{equation}\label{liftedgeo2}
	\begin{split}
		(\nabla^{nh})^{c}_{\dot{W}}\dot{W}=&\left(\ddot{q}^{k}+\Gamma_{ij}^{k}\dot{q}^{i}\dot{q}^{j}\right)\frac{\partial}{\partial q^{k}}\\
		&+\left(\ddot{W}^{k}+\dot{q}^{i}\dot{q}^{j}W^{l}\frac{\partial \Gamma_{ij}^{k}}{\partial q^{l}}+\dot{q}^{j}\dot{W}^{i}(\Gamma_{ij}^{k}+\Gamma_{ji}^{k})\right)\frac{\partial}{\partial \dot{q}^{k}}\; .
		\end{split}
	\end{equation}
The first term vanishes since $c$ is a geodesic for $\nabla^{nh}$ by Theorem \ref{nhgeodesicth}. Hence, we get the expected result. 
\end{proof}

Denote by $T^{nh}$ and $R^{nh}$ the torsion and curvature tensors, respectively, associated with the nonholonomic connection $\nabla^{nh}$, that is,
\begin{equation*}
	\begin{split}
		T^{nh}(X,Y) & =\nabla^{nh}_{X} Y-\nabla^{nh}_{Y} X - [X,Y], \\
		R^{nh}(X,Y)Z & = \nabla^{nh}_{X}\nabla^{nh}_{Y}Z-\nabla^{nh}_{Y}\nabla^{nh}_{X}Z-\nabla^{nh}_{[X,Y]}Z,
	\end{split}
\end{equation*}
for $X, Y, Z \in \mathfrak{X}(Q)$. Then, using $T^{nh}$ and $R^{nh}$, we will obtain a characterization of the nonholonomic Jacobi fields with an equation which may be considered as the version for kinematic nonholonomic systems of the Jacobi equation in Riemannian geometry.

\begin{theorem}\label{Jacobi:equation}
Let $(L_{g},\D)$ be a kinematic nonholonomic system, $\nabla^{nh}$ the nonholonomic connection on $Q$ with torsion and curvature tensors denoted by $T^{nh}$ and $R^{nh}$, respectively, and $W:I\rightarrow TQ$ a vector field along a nonholonomic trajectory $c:I\rightarrow Q$. Then $W$ is a nonholonomic Jacobi field if and only if 
	\begin{equation}\label{nhjacobi}
		\nabla^{nh}_{\dot{c}}\nabla^{nh}_{\dot{c}}W+\nabla^{nh}_{\dot{c}}T^{nh}(W,\dot{c})+R^{nh}(W,\dot{c})\dot{c}=0, \quad \dot{W}(t)\in \D^{c}.
	\end{equation}
\end{theorem}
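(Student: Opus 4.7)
The plan is to combine the results already in place to reduce the Jacobi field condition to a concrete coordinate equation for the components of $W$, and then to verify by a direct coordinate check that this equation coincides with \eqref{nhjacobi}. The key inputs are the chain of identifications provided by Theorems \ref{MainTheorem} and \ref{nhgeodesicth} and Propositions \ref{regularityprop}, \ref{completeconnection} and \ref{complete:connection:coordinates}.

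First, by Theorem \ref{MainTheorem}, $W:I\to TQ$ is a nonholonomic Jacobi field for $(L_g,\D)$ if and only if $W$ is a trajectory of $\Gamma_{(L_{g^c},\D^c)}$. By Proposition \ref{regularityprop} the complete-lift system is regular, so Theorem \ref{nhgeodesicth} applied to $(g^c,\D^c)$ says that $W$ is such a trajectory exactly when
\begin{equation*}
(\nabla^{nh})^c_{\dot W}\dot W = 0 \qquad \text{and} \qquad \dot W(t)\in\D^c,
\end{equation*}
where I have already used Proposition \ref{completeconnection} to identify $\nabla^{NH}=(\nabla^{nh})^c$. The constraint $\dot W(t)\in\D^c$ is precisely the second half of the statement of the theorem, so it remains only to convert the geodesic equation for $(\nabla^{nh})^c$ into \eqref{nhjacobi}.

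Next I unpack this equation in local coordinates using Proposition \ref{complete:connection:coordinates}. The decomposition \eqref{liftedgeo2} splits $(\nabla^{nh})^c_{\dot W}\dot W$ into a $\partial/\partial q^k$ part, which is the nonholonomic geodesic equation for the base curve $c$ and vanishes automatically by Theorem \ref{nhgeodesicth}, and a $\partial/\partial\dot q^k$ part whose vanishing gives
\begin{equation*}
\ddot W^k + W^l\dot q^i\dot q^j\,\frac{\partial\Gamma^k_{ij}}{\partial q^l} + \dot W^i\dot q^j(\Gamma^k_{ij}+\Gamma^k_{ji}) = 0.
\end{equation*}
The proof then reduces to checking that this equation coincides with the coordinate expression of $\nabla^{nh}_{\dot c}\nabla^{nh}_{\dot c}W + \nabla^{nh}_{\dot c}T^{nh}(W,\dot c) + R^{nh}(W,\dot c)\dot c$. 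For this I expand each of the three terms using $T^{nh}(\partial/\partial q^i,\partial/\partial q^j)=(\Gamma^k_{ij}-\Gamma^k_{ji})\,\partial/\partial q^k$ and the usual curvature formula $R^{nh}(\partial/\partial q^i,\partial/\partial q^j)\partial/\partial q^l = \bigl(\partial\Gamma^k_{jl}/\partial q^i-\partial\Gamma^k_{il}/\partial q^j+\Gamma^m_{jl}\Gamma^k_{im}-\Gamma^m_{il}\Gamma^k_{jm}\bigr)\,\partial/\partial q^k$. The $\ddot q^i$ contributions that appear in both $\nabla^{nh}_{\dot c}\nabla^{nh}_{\dot c}W$ and $\nabla^{nh}_{\dot c}T^{nh}(W,\dot c)$ add up to $W^i\ddot q^j\Gamma^k_{ij}$ and are killed by the base geodesic equation $\ddot q^i+\Gamma^i_{jl}\dot q^j\dot q^l=0$; the coefficient of $\dot W^i\dot q^j$ produced by the second covariant derivative alone is the symmetric piece $2\Gamma^k_{ji}$, which combined with the antisymmetric $\Gamma^k_{ij}-\Gamma^k_{ji}$ from the torsion gives exactly $\Gamma^k_{ij}+\Gamma^k_{ji}$; and the surviving quadratic combinations of Chrystoffel symbols from the iterated covariant derivative and from the covariant derivative of the torsion, together with the $\Gamma\Gamma$ part of the curvature, telescope into the single derivative term $W^l\dot q^i\dot q^j\,\partial\Gamma^k_{ij}/\partial q^l$.

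The main obstacle is purely bookkeeping: because $\nabla^{nh}$ is not torsion-free, the symbols $\Gamma^k_{ij}$ are not symmetric in $i,j$, so at each stage one has to track carefully which lower index is being contracted with $\dot W$, $\dot q$ or $W$. There is no conceptual difficulty, since the torsion and curvature terms of \eqref{nhjacobi} are exactly what is needed to repackage the non-tensorial coordinate expression furnished by $(\nabla^{nh})^c_{\dot W}\dot W=0$ into a covariant statement along the nonholonomic trajectory.
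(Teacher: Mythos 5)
Your proposal is correct and follows essentially the same route as the paper's own proof: reduce, via Theorem \ref{MainTheorem}, Theorem \ref{nhgeodesicth} and Propositions \ref{completeconnection} and \ref{complete:connection:coordinates}, to the lifted geodesic equation $(\nabla^{nh})^{c}_{\dot{W}}\dot{W}=0$ with $\dot{W}(t)\in\D^{c}$, and then check in coordinates that its vertical part coincides with the left-hand side of \eqref{nhjacobi}, using the base geodesic equation and careful bookkeeping of the non-symmetric Chrystoffel symbols. The paper's proof is exactly this coordinate computation, organized by expanding the three terms of \eqref{nhjacobi} and showing their sum equals the bracketed expression in \eqref{liftedgeo}.
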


\begin{proof}
	Using the same notation introduced both in the statement of the last proposition as well as in its proof, let us compute the coordinate expression of the left-hand side of equation \eqref{nhjacobi}.
	
	It is easy to see that
	\begin{equation*}
		\nabla^{nh}_{\dot{c}}W= \left(\dot{W}^{k}+\dot{q}^{i}W^{j}\Gamma_{ij}^{k}\right)\frac{\partial}{\partial q^{k}}.
	\end{equation*}
	Computing the second covariant derivative we obtain
	\begin{equation}\label{secondcovariant}
		\begin{split}
			\nabla^{nh}_{\dot{c}}\nabla^{nh}_{\dot{c}}W= \left( \ddot{W}^{m}+2\dot{W}^{j}\dot{q}^{i}\Gamma_{ij}^{m}+\ddot{q}^{i}W^{j}\Gamma_{ij}^{m}+\dot{q}^{i}W^{j}\Gamma_{ij}^{k}\dot{q}^{l}\Gamma_{lk}^{m}  \right. & \\
			\left. +\dot{q}^{i}W^{j}\frac{\partial \Gamma_{ij}^{m}}{\partial q^{l}}\dot{q}^{l} \right) & \frac{\partial}{\partial q^{m}}.
		\end{split}
	\end{equation}
	Now the term with the curvature tensor appearing in equation \eqref{nhjacobi} is
	\begin{equation}\label{curvature}
		R^{nh}(W,\dot{c})\dot{c}=W^{i}\dot{q}^{j}\dot{q}^{l}\left( \frac{\partial \Gamma_{jl}^{m}}{\partial q^{i}}+\Gamma_{jl}^{k}\Gamma_{ik}^{m}-\frac{\partial \Gamma_{il}^{m}}{\partial q^{j}}-\Gamma_{il}^{k}\Gamma_{jk}^{m} \right)\frac{\partial}{\partial q^{m}},
	\end{equation}
	while the torsion tensor is
	\begin{equation*}
		T^{nh}(W,\dot{c})=W^{i}\dot{q}^{j}T_{ij}^{m}\frac{\partial}{\partial q^{m}}, \quad \text{with} \ T_{ij}^{m}=\Gamma_{ij}^{m}-\Gamma_{ji}^{m},
	\end{equation*}
	and the term involving the torsion tensor is
	\begin{equation}\label{torsion}
		\nabla^{nh}_{\dot{c}}T^{nh}(W,\dot{c})=\left( \dot{W}^{i}\dot{q}^{j}T_{ij}^{m}+W^{i}\ddot{q}^{j}T_{ij}^{m}+W^{i}\dot{q}^{j}\frac{\partial T_{ij}^{m}}{\partial q^{l}}\dot{q}^{l}+W^{i}\dot{q}^{j}T_{ij}^{k}\dot{q}^{l}\Gamma_{lk}^{m} \right)\frac{\partial}{\partial q^{m}}
	\end{equation}
	Now we will add the three terms appearing in equation \eqref{nhjacobi} to obtain that the sum is equal to	
	\begin{equation}\label{goal}
	\left( \ddot{W}^{m}+\dot{q}^{i}\dot{q}^{j}W^{l}\frac{\partial \Gamma^{m}_{ij}}{\partial q^{l}}+2\dot{q}^{i}\dot{W}^{j}\Gamma^{m}_{ij}+\dot{W}^{i}\dot{q}^{j}T_{ij}^{m} \right)\frac{\partial}{\partial q^{m}},
	\end{equation}
	which implies that
	\begin{equation*}
	(\nabla^{nh})^{c}_{\dot{W}}\dot{W}=\left( \nabla^{nh}_{\dot{c}}\nabla^{nh}_{\dot{c}}W+\nabla^{nh}_{\dot{c}}T^{nh}(W,\dot{c})+R^{nh}(W,\dot{c})\dot{c} \right)^{v}.
	\end{equation*}
	Indeed note that adding the third term in \eqref{secondcovariant} with the second one in \eqref{curvature} we get
	\begin{equation*}
		\ddot{q}^{i}W^{j}\Gamma_{ij}^{m}+W^{i}\dot{q}^{j}\dot{q}^{l}\Gamma_{jl}^{k}\Gamma_{ik}^{m}=W^{i}\dot{q}^{j}\dot{q}^{l}\Gamma_{jl}^{k}T_{ik}^{m},
	\end{equation*}
	adding the fourth term in \eqref{secondcovariant} with the last term in \eqref{curvature}
	\begin{equation*}
		\dot{q}^{i}\dot{q}^{l}W^{j}\Gamma_{ij}^{k}\Gamma_{lk}^{m}-W^{i}\dot{q}^{j}\dot{q}^{l}\Gamma_{il}^{k}\Gamma_{jk}^{m}=W^{j}\dot{q}^{i}\dot{q}^{l}T_{ij}^{k}\Gamma_{lk}^{m}
	\end{equation*}
	and adding the last term in \eqref{secondcovariant} with the first and third terms in \eqref{curvature} we get
	\begin{equation*}
		\dot{q}^{i}\dot{q}^{l}W^{j}\frac{\partial \Gamma_{ij}^{m}}{\partial q^{l}}+W^{i}\dot{q}^{j}\dot{q}^{l}\left( \frac{\partial \Gamma_{jl}^{m}}{\partial q^{i}}-\frac{\partial \Gamma_{il}^{m}}{\partial q^{j}} \right)=\dot{q}^{i}\dot{q}^{l}W^{j}\left( \frac{\partial T_{ij}^{m}}{\partial q^{l}}+\frac{\partial \Gamma_{il}^{m}}{\partial q^{j}} \right).
	\end{equation*}
	The sum of $\nabla^{nh}_{\dot{c}}\nabla^{nh}_{\dot{c}}W$ and $R^{nh}(W,\dot{c})\dot{c}$ is
	\begin{equation*}
		\left[ \ddot{W}^{m}+2\dot{W}^{j}\dot{q}^{i}\Gamma_{ij}^{m}+W^{i}\dot{q}^{j}\dot{q}^{l}\Gamma_{jl}^{k}T_{ik}^{m}+W^{j}\dot{q}^{i}\dot{q}^{l}T_{ij}^{k}\Gamma_{lk}^{m}+\dot{q}^{i}\dot{q}^{l}W^{j}\left( \frac{\partial T_{ij}^{m}}{\partial q^{l}}+\frac{\partial \Gamma_{il}^{m}}{\partial q^{j}} \right) \right]\frac{\partial}{\partial q^{m}}.
	\end{equation*}
	Comparing the expression above with our goal, which is to prove that the sum of the three terms is equal to \eqref{goal}, the result would be proven if we establish that
	\begin{equation*}
		\left( \dot{W}^{i}\dot{q}^{j}T_{ij}^{m}-W^{i}\dot{q}^{j}\dot{q}^{l}\Gamma_{jl}^{k}T_{ik}^{m}-W^{j}\dot{q}^{i}\dot{q}^{l}T_{ij}^{k}\Gamma_{lk}^{m}-\dot{q}^{i}\dot{q}^{l}W^{j}\frac{\partial T_{ij}^{m}}{\partial q^{l}} \right)\frac{\partial}{\partial q^{m}}=\nabla^{nh}_{\dot{c}}T^{nh}(W,\dot{c})
	\end{equation*}
	Using that $c$ is a geodesic, so
	\begin{equation*}
	\ddot{q}^{i}=-\Gamma_{jk}^{i}\dot{q}^{j}\dot{q}^{k},
	\end{equation*}
	and the identity
	\begin{equation*}
	T_{ik}^{m}=-T_{ki}^{m}
	\end{equation*}
	we get
	\begin{equation}
	\begin{split}
	\nabla^{nh}_{\dot{c}}T^{nh}(W,\dot{c})=\left( \dot{W}^{i}\dot{q}^{j}T_{ij}^{m}-W^{i}\Gamma_{lk}^{j}\dot{q}^{l}\dot{q}^{k}T_{ij}^{m}\right.&-W^{i}\dot{q}^{j}\frac{\partial T_{ji}^{m}}{\partial q^{l}}\dot{q}^{l}\\
	&\left.-W^{i}\dot{q}^{j}T_{ji}^{k}\dot{q}^{l}\Gamma_{lk}^{m} \right)\frac{\partial}{\partial q^{m}},
	\end{split}
	\end{equation}
	which is what we expected. Hence, by Proposition \ref{completeconnection} and the remark following it we proved that $W$ is a nonholonomic Jacobi field if and only if it satisfies equations \eqref{nhjacobi}.
	
\end{proof}

The coordinate expression of the nonholonomic Jacobi equation is still a second-order differential equation. Indeed, the local expression of \eqref{nhjacobi} is
\begin{equation}\label{coordinatejacobi}
	\left( \ddot{W}^{k}+\dot{q}^{i}\dot{q}^{j}W^{l}\frac{\partial \Gamma^{k}_{ij}}{\partial q^{l}}+2\dot{q}^{i}\dot{W}^{j}\Gamma^{k}_{ij}-\dot{q}^{i}\dot{W}^{j}T_{ij}^{k} \right)\frac{\partial}{\partial q^{k}}.
\end{equation}

Equation \eqref{nhjacobi} is called the \textit{nonholonomic Jacobi equation} for the nonholonomic geodesic problem.

\begin{example}
Recall the nonholonomic particle given by $$L(x,y,z,\dot{x},\dot{y},\dot{z})=\frac{1}{2}(\dot{x}^2 +\dot{y}^2 +\dot{z}^2)$$ and subjected to the nonholonomic constraint $\dot{z}-y \dot{x}=0$.

As we have seen before, nonholonomic Jacobi fields may be obtained using two different geometric frameworks: either as the trajectories of the lifted nonholonomic system $\Gamma_{(L_{g},\D^{c})}$ or as the solution of the nonholonomic Jacobi equation \eqref{nhjacobi}. Let us explore both these characterizations in this particular example.

\begin{enumerate}
	\item[(i)] We are going to obtain the Lagrange-d'Alembert equations for the nonholonomic system $L_{g^{c}}$ with constraint distribution $\D^{c}$.
	
	The Lagrangian function is
	{\begin{equation*}
	L_{g^{c}}(q,r,\dot{q},\dot{r})=\dot{x}\dot{u}+\dot{y}\dot{v}+\dot{z}\dot{w}.
	\end{equation*}}
	{where $q = (x, y , z)$, $r = (u, v, w)$}
	and the lifted distribution $\D^{c}$ is given by the span of the vectors
	\begin{equation*}
	\D^{c}=\left\langle \left\{ \frac{\partial}{\partial u}+y\frac{\partial}{\partial w}, \frac{\partial}{\partial v}, \frac{\partial}{\partial x}+y\frac{\partial}{\partial z}+v\frac{\partial}{\partial w}, \frac{\partial}{\partial y} \right\} \right\rangle
	\end{equation*}
	with annihilator $(\D^{c})^{o}$
	\begin{equation*}
	(\D^{c})^{o}=\left\langle \left\{ -y dx+dz, -v dx-ydu+dw \right\} \right\rangle.
	\end{equation*}
	Hence, the new nonholonomic constraints are $\dot{z}-y\dot{x}=0$ and $\dot{w}-v\dot{x}-y\dot{u}=0$. The Lagrange-d'Alembert equations are then
	\begin{equation*}
	\begin{split}
	& \ddot{x}=-y\lambda_{2} \\
	& \ddot{y}=0 \\
	& \ddot{z}=\lambda_{2} \\
	& \dot{z}-y\dot{x}=0,
	\end{split}
	\quad
	\begin{split}
	& \ddot{u}=-y\lambda_{1}-v\lambda_{2}\\
	& \ddot{v}=0 \\
	& \ddot{w}=\lambda_{1} \\
	& \dot{w}-v\dot{x}-y\dot{u}=0,
	\end{split}
	\end{equation*}
	and solving for the Lagrange multipliers' $\lambda_{1}$ and $\lambda_{2}$, we obtain
	\begin{equation*}
		\lambda_{1}=\frac{(\dot{u}\dot{y}+\dot{x}\dot{v})(1+y^2)-2yv\dot{x}\dot{y}}{(1+y^2)^{2}}, \quad \lambda_{2}=\frac{\dot{x}\dot{y}}{1+y^2}.
	\end{equation*} 
	
	\item[(ii)] We will compute the nonholonomic Jacobi equation using the local expression deduced in \eqref{coordinatejacobi}. The only non-vanishing Chrystoffel symbols relative to the nonholonomic connection $\nabla^{nh}$ are
	\begin{equation*}
		\Gamma_{yx}^{x}=\frac{2y}{(1+y^2)^2}, \quad \Gamma_{yx}^{z}=\Gamma_{yz}^{x}=\frac{y^{2}-1}{(1+y^2)^2}, \quad \Gamma_{yz}^{z}=-\frac{2y}{(1+y^2)^2},
	\end{equation*}
	which implies that the non-vanishing torsion entries are
	\begin{equation*}
		T_{yx}^{x}=\frac{2y}{(1+y^2)^2}, \quad T_{yx}^{z}=T_{yz}^{x}=\frac{y^{2}-1}{(1+y^2)^2}, \quad T_{yz}^{z}=-\frac{2y}{(1+y^2)^2},
	\end{equation*}
	along with the corresponding skew-symmetric entries. If the vector field $W$ is given by
	\begin{equation*}
		W=u\frac{\partial}{\partial x}+v\frac{\partial}{\partial y}+w\frac{\partial}{\partial z},
	\end{equation*}
	then Jacobi equation together with the constraint that $\dot{W}\in\D^{c}$ gives
	\begin{equation}
	\begin{split}
	& \ddot{u}+v\left(\dot{x}\dot{y}\frac{\partial \Gamma^{x}_{yx}}{\partial y}+\dot{z}\dot{y}\frac{\partial \Gamma^{x}_{yz}}{\partial y}\right)+2(\dot{u}\dot{y}\Gamma^{x}_{yx}+\dot{w}\dot{y}\Gamma^{x}_{yz})-(\dot{u}\dot{y}T^{x}_{yx}+\dot{w}\dot{y}T^{x}_{yz})=0 \\
	& \ddot{v}=0 \\
	& \ddot{w}+v\left(\dot{x}\dot{y}\frac{\partial \Gamma^{z}_{yx}}{\partial y}+\dot{z}\dot{y}\frac{\partial \Gamma^{z}_{yz}}{\partial y}\right)+2(\dot{u}\dot{y}\Gamma^{z}_{yx}+\dot{w}\dot{y}\Gamma^{z}_{yz})-(\dot{u}\dot{y}T^{z}_{yx}+\dot{w}\dot{y}T^{z}_{yz})=0 \\
	& \dot{w}-v\dot{x}-y\dot{u}=0.
	\end{split}
	\end{equation}
	The fact that $W$ is a vector field along a nonholonomic geodesic satisfying $\dot{z}=y\dot{x}$ simplifies the equation. Moreover, since $T_{yx}^{x}=\Gamma_{yx}^{x}$, $T_{yx}^{z}=T_{yz}^{x}=\Gamma_{yx}^{z}=\Gamma_{yz}^{x}$ and $T_{yz}^{z}=\Gamma_{yz}^{z}$ simplifies even more the equation that reduces to
	\begin{equation}
	\begin{split}
	& \ddot{u}+v\dot{x}\dot{y}\left(\frac{\partial \Gamma^{x}_{yx}}{\partial y}+y\frac{\partial \Gamma^{x}_{yz}}{\partial y}\right)+\dot{u}\dot{y}\Gamma^{x}_{yx}+\dot{w}\dot{y}\Gamma^{x}_{yz}=0 \\
	& \ddot{v}=0 \\
	& \ddot{w}+v\dot{x}\dot{y}\left(\frac{\partial \Gamma^{z}_{yx}}{\partial y}+y\frac{\partial \Gamma^{z}_{yz}}{\partial y}\right)+\dot{u}\dot{y}\Gamma^{z}_{yx}+\dot{w}\dot{y}\Gamma^{z}_{yz}=0 \\
	& \dot{w}-v\dot{x}-y\dot{u}=0.
	\end{split}
	\end{equation}
	It is easy to see now that both approaches coincide.	
\end{enumerate}

\end{example}

\begin{example}
	Let us consider again the nonholonomic particle. We proved before that $\frac{\partial}{\partial z}$ was a Jacobi field. Let us check that it satisfies the nonholonomic Jacobi equation.
	
	In fact, since the component functions of $\frac{\partial}{\partial z}$ in the coordinate basis are constant, the local expression of the left-hand side of the nonholonomic Jacobi equation reduces to
	\begin{equation*}
		\nabla^{nh}_{\dot{c}}\nabla^{nh}_{\dot{c}}W+\nabla^{nh}_{\dot{c}}T^{nh}(W,\dot{c})+R^{nh}(W,\dot{c})\dot{c}=\dot{q}^{i}\dot{q}^{j}\frac{\partial \Gamma^{k}_{ij}}{\partial z}\frac{\partial}{\partial q^{k}},
	\end{equation*}
	where $(q^{i}(t))$ are the coordinate expression of a fixed geodesic. However, since the only non-vanishing Chrystoffel symbols relative to the nonholonomic connection are
	\begin{equation*}
		\Gamma_{yx}^{x}=\frac{2y}{(1+y^2)^2}, \quad \Gamma_{yx}^{z}=\Gamma_{yz}^{x}=\frac{y^{2}-1}{(1+y^2)^2}, \quad \Gamma_{yz}^{z}=-\frac{2y}{(1+y^2)^2},
	\end{equation*}
	there is no dependence on the coordinate $z$, hence the expression above vanishes, i.e., the nonholonomic Jacobi equations is satisfied.
\end{example}



\appendix

\section{Review on Complete and Vertical lifts}\label{appa}

In this section, we will review some constructions on the theory of complete and vertical lifts in the tangent bundle (for more details, see \citep*{LR89} or \citep*{YaIs73}).

Let $\tau_{Q}:TQ \rightarrow Q$ be the canonical projection of the tangent bundle.

Recall that the complete and vertical lifts of a function $f\in C^{\infty}(Q)$ are defined by
\begin{equation}\label{function:lifts}
f^{c}(v)=\langle df(q),v \rangle, \quad f^{v}(v)=f\circ\tau_{Q}(v), \ \ v\in T_{q} Q.
\end{equation}

{In other words, $f^{c}$ is the fiberwise linear function on $TQ$ induced by the $1$-form $df$}. Hence, the complete lift of $f$ may be expressed on natural coordinates as $$f^{c}(q^{i},v^{i})=\frac{\partial f}{\partial q^{i}}v^{i}.$$

We will also use the \textit{complete lift} of a vector field $X\in\mathfrak{X}(Q)$, which is the vector field $X^{c}\in\mathfrak{X}(TQ)$ satisfying
\begin{equation}\label{completelift}
X^{c}(f\circ\tau_{Q})=X(f)\circ\tau_{Q}, \text{  and  } X^{c}(\hat{\alpha})=\widehat{\mathcal{L}_{X}\alpha},
\end{equation}
for any $f\in C^{\infty}(Q)$, $\alpha\in\Omega^{1}(Q)$ and where $\hat{\alpha}\in C^{\infty}(TQ)$ is the associated fiberwise linear function given by
\begin{equation*}
	\hat{\alpha}(v)=\langle \alpha(\tau_{Q}(v)), v \rangle, \quad v\in TQ.
\end{equation*}

In what follows $\Le_{X}$ denotes the Lie derivative with respect to $X$.

We may also introduce the fiberwise quadratic function associated to a $(0,2)$-tensor $T$ on $Q$ denoted by $T^{\mathfrak{q}}:TQ\rightarrow \R$ and defined by
\begin{equation*}
	T^{\mathfrak{q}}(v)=T_{\tau_{Q}(v)} (v,v), \quad v\in TQ.
\end{equation*}

Using equation \eqref{completelift} one may prove
\begin{lemma}
	If $X$ is a vector field on $Q$ and $T$ is a $(0,2)$-tensor on $Q$ then
	\begin{equation}\label{completequadratic}
	X^{c}(T^{\mathfrak{q}})=(\Le_{X}T)^{\mathfrak{q}}.
	\end{equation}
\end{lemma}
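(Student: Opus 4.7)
My plan is to reduce the identity to the two already-established actions of $X^c$ recorded in \eqref{completelift}, namely $X^c(f^v)=(X(f))^v$ on basic functions and $X^c(\hat{\alpha})=\widehat{\Le_X\alpha}$ on fiberwise linear functions, combined with the fact that $X^c$ is a derivation of $C^\infty(TQ)$. The key observation is that a fiberwise quadratic function $T^{\mathfrak{q}}$ factors, locally, as a sum of products of one basic function with two fiberwise linear functions, so that Leibniz reduces the assertion to the one-form case.

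Concretely, on a chart $(q^i)$ one writes $T=T_{ij}\, dq^i\otimes dq^j$ with $T_{ij}\in C^\infty(U)$; then by the very definition of $(\cdot)^{\mathfrak{q}}$,
\begin{equation*}
T^{\mathfrak{q}} \;=\; T_{ij}^{\,v}\,\widehat{dq^i}\,\widehat{dq^j}.
\end{equation*}
Applying $X^c$ as a derivation and invoking \eqref{completelift} for each factor yields three terms. On the other hand, the tensorial Leibniz rule
\begin{equation*}
\Le_X\bigl(T_{ij}\, dq^i\otimes dq^j\bigr) \;=\; X(T_{ij})\, dq^i\otimes dq^j \;+\; T_{ij}\,(\Le_X dq^i)\otimes dq^j \;+\; T_{ij}\, dq^i\otimes(\Le_X dq^j)
\end{equation*}
followed by $(\cdot)^{\mathfrak{q}}$ produces exactly the same three terms. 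A partition-of-unity argument glues the local identities to a global one.

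A more conceptual variant, which is what I would actually present, uses the flow: the identities \eqref{completelift} characterise $X^c$ as the infinitesimal generator of the tangent-lifted flow $T\phi_t^X$ on $TQ$ (both vector fields agree on the algebra generators $q^i\circ\tau_Q$ and $\widehat{dq^i}$). Hence for $v\in T_qQ$,
\begin{equation*}
X^c(T^{\mathfrak{q}})(v) \;=\; \left.\tfrac{d}{dt}\right|_{t=0} T_{\phi_t^X(q)}\bigl(T_q\phi_t^X\,v,\; T_q\phi_t^X\,v\bigr) \;=\; (\Le_X T)_q(v,v) \;=\; (\Le_X T)^{\mathfrak{q}}(v),
\end{equation*}
the middle equality being the standard pullback description of the Lie derivative of a covariant tensor. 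I do not foresee a real obstacle here: the only point requiring a brief justification is the identification of $X^c$ with the generator of $T\phi_t^X$, which is immediate from \eqref{completelift}.
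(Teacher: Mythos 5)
Your proposal is correct, and it actually contains two arguments of different character. Your first, coordinate variant is essentially the paper's own proof: the paper reduces to decomposable tensors $T=\alpha\otimes\beta$, writes $T^{\mathfrak{q}}=\hat{\alpha}\cdot\hat{\beta}$, and applies the derivation property of $X^{c}$ together with $X^{c}(\hat{\alpha})=\widehat{\Le_{X}\alpha}$ from \eqref{completelift}; your chart computation with $T^{\mathfrak{q}}=T_{ij}^{\,v}\,\widehat{dq^{i}}\,\widehat{dq^{j}}$ is the same mechanism with the coefficient functions kept explicit (and the partition of unity is unnecessary: both sides are local operators, so a pointwise identity verified on chart domains is already global). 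The flow-based variant you say you would actually present is genuinely different from the paper's route: instead of decomposing $T$, you identify $X^{c}$ as the infinitesimal generator of the tangent-lifted flow $T\phi_{t}^{X}$ --- a step that is indeed immediate from \eqref{completelift}, since the basic functions $q^{i}\circ\tau_{Q}$ and the fiberwise linear functions $\widehat{dq^{i}}$ form a coordinate system on $TQ$, so a vector field is determined by its action on them --- and then use the pullback identity $T^{\mathfrak{q}}\circ T\phi_{t}^{X}=\bigl((\phi_{t}^{X})^{*}T\bigr)^{\mathfrak{q}}$ together with $\Le_{X}T=\frac{d}{dt}\big|_{t=0}(\phi_{t}^{X})^{*}T$. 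What your flow argument buys: it explains \emph{why} the identity holds (the assignment $T\mapsto T^{\mathfrak{q}}$ is natural under tangent-lifted diffeomorphisms), it avoids multilinearity bookkeeping, and it generalizes verbatim to $(0,k)$-tensors and their fiberwise homogeneous functions; it is also consistent with the paper's toolkit, since the paper itself uses the fact that the flow of a complete lift is the tangent-lifted flow in the proof of Theorem \ref{jacobikilling}. What the paper's algebraic route buys: it stays entirely inside the lift calculus of Appendix A, using only the defining identities \eqref{completelift} and the Leibniz rules, with no appeal to flows or to the dynamical characterization of the Lie derivative.
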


\begin{proof}
	It is sufficient to prove the result for $T=\alpha\otimes\beta$, with $\alpha$ and $\beta$ being 1-forms on $Q$. In this case,
	\begin{equation*}
		T^{\mathfrak{q}}=\hat{\alpha}\cdot\hat{\beta}
	\end{equation*}
	and, using \eqref{completelift}, it follows that
	\begin{equation*}
	X^{c}(T^{\mathfrak{q}})=\widehat{\mathcal{L}_{X}\alpha}\cdot\hat{\beta}+\hat{\alpha}\cdot\widehat{\mathcal{L}_{X}\beta}.
	\end{equation*}
	This implies that
	\begin{equation*}
	X^{c}(T^{\mathfrak{q}})=\left[\mathcal{L}_{X}\alpha\otimes\beta+\alpha\otimes\mathcal{L}_{X}\beta\right]^{\mathfrak{q}}=(\mathcal{L}_{X} T)^{\mathfrak{q}}.
	\end{equation*}
\end{proof}

The \textit{vertical lift} of a vector field is the vector field $X^{v}\in\mathfrak{X}(TQ)$ satisfying
\begin{equation}
X^{v}(f\circ\tau_{Q})=0, \text{  and  } X^{v}(\hat{\alpha})=\langle \alpha,X \rangle \circ \tau_{Q}.
\end{equation}

Similarly we can define the \textit{complete lift} of a $k$-form $\alpha\in\Omega^{k}(Q)$ to be the $k$-form $\alpha^{c}\in\Omega^{k}(TQ)$ defined by
\begin{equation}\label{1formcomplete}
\alpha^{c}(X_{1}^{c},\ldots,X_{k}^{c})=(\alpha(X_{1},\ldots,X_{k}))^{c},
\end{equation}
where $X_{i}\in \mathfrak{X}(Q)$. The expression above uniquely defines $\alpha^{c}$ and, moreover,
\begin{equation}\label{differential:completelift}
	d\alpha^{c}=(d\alpha)^{c}
\end{equation}
and
\begin{equation}\label{inner:complete}
i_{X^{c}}\alpha^{c}=(i_{X}\alpha)^{c}.
\end{equation}
On the other hand, the \textit{vertical lift} of a $k$-form is simply the pullback by $\tau_{Q}$, i.e.,
\begin{equation}\label{1formvertical}
\alpha^{v}=(\tau_{Q})^{*}\alpha.
\end{equation}

In coordinates, the expressions of the complete and vertical lifts of $X=X^{i}\frac{\partial}{\partial q^{i}}$ and $\alpha=\alpha_{i} dq^{i}$ are
\begin{equation}
\begin{split}
X^{c}=X^{i}\frac{\partial}{\partial q^{i}}+\frac{\partial X^{i}}{\partial q^{j}}v^{j} \frac{\partial}{\partial v^{i}}, \quad  & \alpha^{c}=\frac{\partial \alpha_{i}}{\partial q^{j}}v^{j} dq^{i} + \alpha_{i} dv^{i} \\
X^{v}=X^{i}\frac{\partial}{\partial v^{i}}, \quad & \alpha^{v}=\alpha_{i} dq^{i}.
\end{split}
\end{equation}

To end this section, we will recall some useful identities satisfied by vertical and complete lifts. For any one form $\alpha\in \Omega^{1}(Q)$, one has that
\begin{equation}\label{alphalift}
	 \alpha^{v}(Y^{c})=(\alpha(Y))^{v}, \quad \alpha^{v}(Y^{v})=0.
\end{equation}

The Lie bracket of vector fields on complete and vertical lifts satisfies the following relations
\begin{equation}\label{liebralift}
	\begin{split}
	[X^{c},Y^{c}]&=[X,Y]^{c}, \quad [X^{v},Y^{v}]=0,\\ [X^{c},Y^{v}]&=[X^{v},Y^{c}]=[X,Y]^{v}.
	\end{split}
\end{equation}

On the other hand, if $S:TTQ\rightarrow TTQ$ is the vertical endomorphism in $Q$ then complete and vertical lifts satisfy
\begin{equation}\label{Slifts}
	SX^{c}=X^{v}, \quad SX^{v}=0, \quad \text{for} \ X\in\mathfrak{X}(Q)
\end{equation}

Finally, if $S^{*}:T^{*}TQ\rightarrow T^{*}TQ$ is the dual morphism of $S$, then
\begin{equation}\label{S*lifts}
S^{*}\alpha^{c}=\alpha^{v}, \quad S^{*}\alpha^{v}=0, \quad S^{*}(d\hat{\alpha})=\alpha^{v}, \quad \text{for} \ \alpha\in\Omega^{1}(Q).
\end{equation}


\section{Complete and vertical lifts in Riemannian geometry}\label{appb}

The main result of this section is to prove how a pseudo-Riemannian metric $h$ is related to the Poincaré-Cartan two-form induced by this metric. In particular, the action of the later on complete and vertical lifts may be expressed with objects that depend only on the metric structure.

Recall that when we have a manifold $Q$ and a Lagrangian $L$ on its tangent bundle, the Poincaré-Cartan one-form is defined to be $\theta_{L}=S^{*}dL$. 

In agreement with the previous notation, whenever we are given a $(0,2)$-tensor $h$ on $Q$, we will denote by $L_{h}:TQ\rightarrow\R$ the Lagrangian function associated with $h$ defined by
\begin{equation*}
L_{h}(v)=\frac{1}{2}h(v,v), \quad v\in TQ.
\end{equation*}

Also we will denote by
\begin{equation*}
	\flat_{h}:TQ\rightarrow T^{*}Q,
\end{equation*}
the musical isomorphism associated with $h$ by
$\flat_h(X)(Y)=h(X, Y)$ for all $X, Y\in \mathfrak{X}(Q)$.

We will generalize the fundamental formula of Riemannian geometry 

\begin{lemma}\label{liedelemmma}
	Let $h$ be a symmetric non-degenerate $(0,2)$-tensor and $\nabla^{h}$ the Levi-Civita connection with respect to $h$. Then the Lie derivative of $h$ satisfies
	\begin{equation}\label{FFRG}
	\Le_{X}h (Y,Z)=2h(\nabla^{h}_{Y}X,Z)-d(\flat_{h}(X))(Y,Z), \quad X,Y,Z \in \mathfrak{X}(Q).
	\end{equation}
\end{lemma}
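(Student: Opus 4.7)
The plan is to derive \eqref{FFRG} by using only the two defining properties of $\nabla^h$: that it is torsion-free and that it is metric-compatible with $h$. The identity is purely tensorial, and the computation is essentially the same as in the Riemannian case — non-degeneracy of $h$ is not needed here; only symmetry and the existence of the Levi-Civita connection are used. So the proof reduces to two parallel calculations that are then combined.

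First I would expand the left-hand side directly from the definition of the Lie derivative:
\begin{equation*}
(\Le_{X} h)(Y,Z)=X(h(Y,Z))-h([X,Y],Z)-h(Y,[X,Z]).
\end{equation*}
Using metric compatibility on the first term and the torsion-free identity $[X,Y]=\nabla^{h}_{X}Y-\nabla^{h}_{Y}X$ (and likewise for $[X,Z]$) in the other two, the terms involving $\nabla^{h}_{X}Y$ and $\nabla^{h}_{X}Z$ cancel, producing the symmetric expression
\begin{equation*}
(\Le_{X}h)(Y,Z)=h(\nabla^{h}_{Y}X,Z)+h(\nabla^{h}_{Z}X,Y).
\end{equation*}

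Next I would compute $d(\flat_{h}(X))(Y,Z)$ using the Cartan formula for the exterior derivative of a $1$-form together with the same two properties of $\nabla^{h}$. Writing $(\flat_{h}(X))(Y)=h(X,Y)$, applying metric compatibility to the terms $Y(h(X,Z))$ and $Z(h(X,Y))$, and using torsion-freeness on $h(X,[Y,Z])$, the symmetric pieces involving $\nabla^{h}_{Y}Z$ and $\nabla^{h}_{Z}Y$ cancel and one is left with the antisymmetric identity
\begin{equation*}
d(\flat_{h}(X))(Y,Z)=h(\nabla^{h}_{Y}X,Z)-h(\nabla^{h}_{Z}X,Y).
\end{equation*}

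Adding the two displayed identities immediately gives
\begin{equation*}
(\Le_{X}h)(Y,Z)+d(\flat_{h}(X))(Y,Z)=2\,h(\nabla^{h}_{Y}X,Z),
\end{equation*}
which rearranges to \eqref{FFRG}. No step is really an obstacle; the only thing to be careful about is bookkeeping — making sure that every cancellation uses either metric compatibility or torsion-freeness and not an unwarranted symmetry of $\nabla^{h}X$ in its arguments, since $\nabla^{h}_{Y}X$ and $\nabla^{h}_{Z}X$ are in general distinct and indeed their asymmetry is precisely what the $d(\flat_{h}(X))$ term records.
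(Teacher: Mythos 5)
Your proof is correct and follows essentially the same route as the paper: both expand $\Le_{X}h$ via brackets and use torsion-freeness plus metric compatibility to obtain $\Le_{X}h(Y,Z)=h(\nabla^{h}_{Y}X,Z)+h(Y,\nabla^{h}_{Z}X)$, and both then combine this with the identity $d(\flat_{h}(X))(Y,Z)=h(\nabla^{h}_{Y}X,Z)-h(\nabla^{h}_{Z}X,Y)$. The only cosmetic difference is that the paper invokes Koszul's formula for this last identity, whereas you derive it by direct expansion of the exterior derivative — an equally valid (and arguably more self-contained) verification of the same step.
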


\begin{proof}
	By definition of Lie derivative one has that
	\begin{equation*}
	\Le_{X}h (Y,Z)=X(h(Y,Z))-h([X,Y],Z)-h(Y,[X,Z]).
	\end{equation*}
	Using the fact that the Levi-Civita connection $\nabla^{h}$ is symmetric and compatible with the metric $h$ one gets
	\begin{equation}\label{LXh}
	\Le_{X}h (Y,Z)=h(\nabla^{h}_{Y}X,Z)+h(Y,\nabla^{h}_{Z}X).
	\end{equation}
	Also from definition of differential of a one-form we know that
	\begin{equation*}
	d(\flat_{h}(X))(Y,Z)=Y(h(X,Z))-Z(h(X,Z))-h(X,[Y,Z]).
	\end{equation*}
	It is not difficult to show using the Koszul's formula for $\nabla^{h}$ that
	\begin{equation*}
	h(Y,\nabla^{h}_{Z}X)-h(\nabla^{h}_{Y}X,Z)=-d(\flat_{h}(X))(Y,Z)
	\end{equation*}
	and plugging in the last equation into \eqref{LXh}, we get the desired formula for $\Le_{X}h (Y,Z)$.
\end{proof}

During the remaining of this section, we will denote just by $\nabla$ the Levi-Civita connection with respect to a symmetric non-degenerate $(0,2)$-tensor, whenever it is clear from the context to which tensor it is associated.

Now we will see how complete and vertical lifts act on metric Lagrangians.

\begin{lemma}\label{cvlift}
	Let $h$ be a pseudo-Riemannian metric and $L_{h}$ its associate Lagrangian. Given $X\in \mathfrak{X}(Q)$ we have that
	\begin{equation}
	X^{c}(L_{h})=L_{\Le_{X}h}, \quad X^{v}(L_{h})=\widehat{\flat_{h}(X)},
	\end{equation}
	where $L_{\Le_{X}h}: TQ\rightarrow \R$ denotes the Lagrangian function associated to the $(0,2)$-tensor $\Le_{X}h$.
\end{lemma}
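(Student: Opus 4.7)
The plan is to read off both identities from the definition of $L_h$ together with the results already established in Appendix A. Observing that $L_h = \tfrac{1}{2}\, h^{\mathfrak{q}}$ realizes the Lagrangian as (half) the fiberwise quadratic function associated with the symmetric $(0,2)$-tensor $h$, the first identity will fall out directly from the quadratic-lift formula \eqref{completequadratic} proved in the preceding lemma; the second identity will be a short pointwise computation using the infinitesimal definition of the vertical lift of a vector field.

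For the complete-lift identity, applying \eqref{completequadratic} with $T=h$ gives $X^{c}(h^{\mathfrak{q}}) = (\mathcal{L}_X h)^{\mathfrak{q}}$, and multiplying by $\tfrac{1}{2}$ yields
$$X^{c}(L_h) \;=\; \tfrac{1}{2}\, X^{c}(h^{\mathfrak{q}}) \;=\; \tfrac{1}{2}(\mathcal{L}_X h)^{\mathfrak{q}} \;=\; L_{\mathcal{L}_X h},$$
as desired.

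For the vertical-lift identity, I would evaluate at an arbitrary $v_q \in T_q Q$. Since $X^{v}_{v_q} = \tfrac{d}{dt}\bigl|_{t=0}(v_q + t\, X(q))$, one computes
$$X^{v}(L_h)(v_q) \;=\; \left.\tfrac{d}{dt}\right|_{t=0} \tfrac{1}{2}\, h\bigl(v_q + t X(q),\, v_q + t X(q)\bigr) \;=\; h\bigl(X(q),\, v_q\bigr),$$
using the bilinearity and symmetry of $h$. By the definition of the musical isomorphism, $h(X(q), v_q) = \langle \flat_h(X)(q),\, v_q\rangle$, which is precisely $\widehat{\flat_h(X)}(v_q)$.

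There is no serious obstacle: the first identity is an immediate consequence of the preceding lemma, and the second is a one-line calculation from the fiberwise definition of the vertical lift. The only conceptual point worth highlighting is that $L_h$ is quadratic rather than linear in the fibers, so the complete-lift identity does not follow from the purely linear characterizations in \eqref{completelift}; it is precisely the quadratic-lift identity \eqref{completequadratic} that bridges this gap.
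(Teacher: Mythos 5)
Your proof is correct, and it takes a genuinely different route from the paper's. The paper proves both identities by direct computation in natural coordinates $(q^i,v^i)$: writing $L_h=\frac{1}{2}h_{ij}v^iv^j$ and $X=X^i\frac{\partial}{\partial q^i}$, it computes $X^v(L_h)$ and $X^c(L_h)$ explicitly and matches the latter against the coordinate expression of $L_{\Le_X h}$ using the symmetry of $h$ and an index relabelling. You instead observe that $L_h=\frac{1}{2}h^{\mathfrak{q}}$ and invoke the quadratic-lift identity \eqref{completequadratic} from Appendix A (which the paper establishes for arbitrary $(0,2)$-tensors by reduction to $\alpha\otimes\beta$), so the complete-lift identity becomes an immediate corollary; the vertical-lift identity you obtain by a pointwise differentiation along the ray $t\mapsto v_q+tX(q)$, which is consistent with the paper's definition of $X^v$ (in coordinates $X^v=X^i\frac{\partial}{\partial v^i}$, so your curve realizes exactly this vector). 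Your approach buys economy and conceptual clarity: it reuses machinery already proved rather than redoing a special case in coordinates, and you correctly isolate why \eqref{completelift} alone is insufficient --- it governs only basic and fiberwise linear functions, whereas $L_h$ is fiberwise quadratic, which is precisely the gap \eqref{completequadratic} fills. The paper's computation, in exchange, is self-contained and needs nothing beyond the coordinate expressions of the lifts. Both arguments are complete; there is no gap in yours.
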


\begin{proof}
	Let us prove the result on natural coordinates. Let $(q^{i})$ be coordinates on $Q$ and $(q^{i},v^{i})$ be the natural coordinates on $TQ$. Let $X=X^{i}\frac{\partial}{\partial q^{i}}$ and $L_{h}=\frac{1}{2}h_{ij}v^{i}v^{j}$. Then
	\begin{equation*}
		X^{v}(L_{h})=X^{i}h_{ij}v^{j}=\widehat{\flat_{h}(X)}
	\end{equation*}
	and
	\begin{equation*}
		X^{c}(L_{h})=\frac{1}{2}X^{k}\frac{\partial h_{ij}}{\partial q^{k}}v^{i}v^{j}+v^{i}\frac{\partial X^{k}}{\partial q^{i}}h_{kj}v^{j}.
	\end{equation*}
	On the other hand
	\begin{equation*}
		L_{\Le_{X}h}=\frac{1}{2}\left( X^{k}\frac{\partial h_{ij}}{\partial q^{k}}+h_{kj}\frac{\partial X^{k}}{\partial q^{i}}+h_{ik}\frac{\partial X^{k}}{\partial q^{j}} \right)v^{i}v^{j}.
	\end{equation*}
	Since $h$ is symmetric and changing indices $i\leftrightarrow j$ in the last term above, we get
	\begin{equation*}
		L_{\Le_{X}h}=\frac{1}{2} X^{k}\frac{\partial h_{ij}}{\partial q^{k}}v^{i}v^{j}+h_{kj}\frac{\partial X^{k}}{\partial q^{i}} v^{i}v^{j},
	\end{equation*}
	which equals $X^{c}(L_{h})$.
\end{proof}

The fundamental formula of Riemannian geometry allows us to express the Lagrangian function $L_{\Le_{X}h}$ introduced before in terms of a new Lagrangian function associated with the $(0,2)$-tensor
\begin{equation}\label{nablatensor}
(\nabla^{h} X) (Y,Z)=h(\nabla^{h}_{Y}X,Z),
\end{equation}
where $\nabla^{h}$ is the Levi-Civita connection with respect to $h$. Indeed, given any $Y\in\mathfrak{X}(Q)$, by skew-symmetry of the exterior derivative one has that 
\begin{equation*}
\begin{split}
L_{\Le_{X}h}\circ Y & =\frac{1}{2}\Le_{X}h (Y,Y)=h(\nabla^{h}_{Y}X,Y)-\frac{1}{2}d(\flat_{h}(X))(Y,Y) \\
& = (\nabla^{h} X)(Y,Y)=2L_{(\nabla^{h} X)}\circ Y.
\end{split}
\end{equation*}

\begin{lemma}
	Let $h$ be a {pseudo-Riemanninan metric} on $Q$, $L_{h}$ is its associated Lagrangian function and $\omega_{L_{h}}=-d\theta_{L_{h}}$ the corresponding Poincaré-Cartan 2-form. If $\flat_{\omega_{L_{h}}}$ and $\flat_{h}$ denote the musical isomorphisms associated to the symplectic form and to the tensor $h$, respectively, then for every $X\in\mathfrak{X}(Q)$ the Poincaré-Cartan 1-form acts on vertical and complete lifts of $X$ according to
	\begin{equation}\label{thetalift}
		 \theta_{L_{h}}(X^{v})=0, \quad \theta_{L_{h}}(X^{c})=\widehat{\flat_{h}(X)}
	\end{equation}
	and the Poincaré-Cartan 2-form acts according to
	\begin{equation}
		\begin{split}
			&\omega_{L_{h}}(X^{c},Y^{c})=d(\widehat{\flat_{h}(X)})(Y^{c})-2\theta_{L_{(\nabla^{h} X)}}(Y^{c}), \\
			&\omega_{L_{h}}(X^{c},Y^{v})=d(\widehat{\flat_{h}(X)})(Y^{v}) , \quad \omega_{L_{h}}(X^{v},Y^{v})=0,
		\end{split}
	\end{equation}
	where $(\nabla^{h} X)$ is the (0,2)-tensor defined in (\ref{nablatensor}). Hence, we may also write
	\begin{equation}\label{omegalift}
		\flat_{\omega_{L_{h}}}(X^{v})=-(\flat_{h}(X))^{v}, \quad \flat_{\omega_{L_{h}}}(X^{c})=d(\widehat{\flat_{h}(X)})-2\theta_{L_{(\nabla^{h} X)}}.
	\end{equation}
 \end{lemma}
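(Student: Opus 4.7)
The strategy is to handle the $\theta_{L_h}$ identities first and then derive the $\omega_{L_h}$ identities from $\omega_{L_h}=-d\theta_{L_h}$, using Cartan's formula together with the bracket relations \eqref{liebralift} between complete and vertical lifts.

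For $\theta_{L_h}$, since $\theta_{L_h}=S^{*}dL_{h}$, one has $\theta_{L_h}(Z)=(SZ)(L_h)$ for every $Z\in\mathfrak{X}(TQ)$. Using $SX^{v}=0$ and $SX^{c}=X^{v}$ from \eqref{Slifts} together with $X^{v}(L_h)=\widehat{\flat_{h}(X)}$ from Lemma \ref{cvlift}, the two formulas in \eqref{thetalift} follow at once.

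For $\omega_{L_h}$, I would apply Cartan's formula $d\theta(A,B)=A(\theta(B))-B(\theta(A))-\theta([A,B])$ to each of the three pairs. The cases $(X^{v},Y^{v})$ and $(X^{c},Y^{v})$ are immediate: the bracket relations \eqref{liebralift} give $[X^{v},Y^{v}]=0$ and $[X^{c},Y^{v}]=[X,Y]^{v}$, so in both cases $\theta_{L_h}$ annihilates the bracket; combined with the previous step this yields $0$ and $Y^{v}(\widehat{\flat_{h}(X)})=d(\widehat{\flat_{h}(X)})(Y^{v})$ respectively. The only substantive case is $(X^{c},Y^{c})$: Cartan's formula together with $[X^{c},Y^{c}]=[X,Y]^{c}$ produces
\begin{equation*}
\omega_{L_h}(X^{c},Y^{c})=-\widehat{\Le_{X}\flat_{h}(Y)}+Y^{c}(\widehat{\flat_{h}(X)})+\widehat{\flat_{h}([X,Y])}.
\end{equation*}
A direct Lie-derivative computation shows $\Le_{X}\flat_{h}(Y)-\flat_{h}([X,Y])=(\Le_{X}h)(Y,\cdot)$, so the first and third terms collapse to $-\widehat{(\Le_{X}h)(Y,\cdot)}$.

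The main remaining step is to identify this with $-2\theta_{L_{(\nabla^{h}X)}}(Y^{c})$. Because $L_{T}$ for any $(0,2)$-tensor $T$ depends only on its symmetric part $T^{s}$, and because Lemma \ref{liedelemmma} together with the skew-symmetry of $d(\flat_{h}(X))$ yields, after symmetrising in $Y,Z$, the identity $(\nabla^{h}X)^{s}=\tfrac12 \Le_{X}h$, one has $L_{(\nabla^{h}X)}=\tfrac12 L_{\Le_{X}h}$. Applying the computation of Step~1 to the symmetric tensor $\tfrac12\Le_{X}h$ then gives $\theta_{L_{(\nabla^{h}X)}}(Y^{c})=\tfrac12\widehat{(\Le_{X}h)(Y,\cdot)}$, matching the required equality.

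The equivalent formulation \eqref{omegalift} is a restatement: for $\flat_{\omega_{L_h}}(X^{v})$, test against $Z^{c}$ and $Z^{v}$ and use $X^{v}(\widehat{\flat_{h}(Z)})=h(X,Z)\circ\tau_Q$ together with \eqref{alphalift} to identify the result with $-(\flat_{h}(X))^{v}$; for $\flat_{\omega_{L_h}}(X^{c})$ read the coefficients off the action of $\omega_{L_h}(X^{c},\cdot)$ on complete and vertical lifts, noting that $\theta_{L_{(\nabla^{h}X)}}$ kills vertical lifts by Step~1. The main obstacle is the symmetric-part identification for $\nabla^{h}X$, which rests crucially on the fundamental formula \eqref{FFRG}.
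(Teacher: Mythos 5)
Your proposal is correct and follows essentially the same route as the paper's proof: the $\theta_{L_h}$ identities via $S^{*}$ and Lemma \ref{cvlift}, then Cartan's formula with the bracket relations \eqref{liebralift} for $\omega_{L_h}$, and the fundamental formula \eqref{FFRG} to identify $L_{\Le_X h}=2L_{(\nabla^h X)}$. The only cosmetic difference is that you obtain this last identity by symmetrising \eqref{FFRG} in $Y,Z$, whereas the paper evaluates it on the diagonal $(Y,Y)$ so that the skew term $d(\flat_h(X))$ drops out --- these are the same observation.
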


\begin{proof}
	Recalling the definition of $\theta_{L_{h}}$, the formulas in the statement are rewritten as
	\begin{equation*}
		\theta_{L_{h}}(X^{v})=S^{*}(dL_{h})(X^{v}), \quad \theta_{L_{h}}(X^{c})=S^{*}(dL_{h})(X^{c}),
	\end{equation*}
	respectively. Now applying \eqref{Slifts} we immediately prove that $\theta_{L_{h}}(X^{v})=0$ and
	\begin{equation*}
		\theta_{L_{h}}(X^{c})=X^{v}(L_{h}).
	\end{equation*}
	By the previous Lemma we conclude $\theta_{L_{h}}(X^{c})=\widehat{\flat_{h}(X)}$.

	Choosing an arbitrary $Y\in\mathfrak{X}(Q)$, we will now evaluate the symplectic form over complete and vertical lifts in order to find the desired formulas for $\flat_{\omega_{L_{h}}}(X^{c})$ and $\flat_{\omega_{L_{h}}}(X^{v})$.
	
	Using that $\omega_{L_{h}}$ is an exact symplectic form and the characterization of the exterior derivative of a 1-form we get
	\begin{equation*}
		\omega_{L_{h}}(X^{c},Y^{c})=-X^{c}\left(\theta_{L_{h}}(Y^{c})\right)+Y^{c}\left(\theta_{L_{h}}(X^{c})\right)+\theta_{L_{h}}([X^{c},Y^{c}]).
	\end{equation*}
	Using equations \eqref{thetalift} we have just proved and the formulas in \eqref{liebralift} we get
	\begin{equation*}
		\omega_{L_{h}}(X^{c},Y^{c})=-X^{c}\left(\widehat{\flat_{h}(Y)}\right)+Y^{c}\left(\widehat{\flat_{h}(X)}\right)+\widehat{\flat_{h}([X,Y])}.
	\end{equation*}
	Applying now the definition of complete lift over fiberwise linear functions
	\begin{equation*}
	\omega_{L_{h}}(X^{c},Y^{c})=-\widehat{\Le_{X}\flat_{h}(Y)}+\widehat{\Le_{Y}\flat_{h}(X)}+\widehat{\flat_{h}([X,Y])}.
	\end{equation*}
	Note that
	\begin{equation*}
		\flat_{h}([X,Y])(Z)-\Le_{X}(\flat_{h}(Y))(Z)=-\Le_{X}h(Y,Z).
	\end{equation*}
	Hence, the right-hand side of the above equation may be rewritten using the musical isomorphism associated to the $(0,2)$-tensor $\Le_{X}h$ which we denote by $\flat_{\Le_{X}h}$. So we deduce that
	\begin{equation*}
	\omega_{L_{h}}(X^{c},Y^{c})=-\widehat{\flat_{\Le_{X}h}(Y)}+\widehat{\Le_{Y}\flat_{h}(X)}.
	\end{equation*}
	Now using again the relations in \eqref{thetalift} and the definition of complete lift
	\begin{equation*}
	\omega_{L_{h}}(X^{c},Y^{c})=-\theta_{L_{\Le_{X}h}}(Y^{c})+Y^{c}(\widehat{\flat_{h}(X)}).
	\end{equation*}
	Using that $L_{\Le_{X}h}=2L_{(\nabla^{h} X)}$ and rewriting the last term above, we finally get
	\begin{equation*}
	\omega_{L_{h}}(X^{c},Y^{c})=-2\theta_{L_{(\nabla^{h} X)}}(Y^{c})+d(\widehat{\flat_{h}(X)})(Y^{c}).
	\end{equation*}
	Proceeding analogously in the other cases we find that
	\begin{equation*}
	\omega_{L_{h}}(X^{c},Y^{v})=-X^{c}\left(\theta_{L_{h}}(Y^{v})\right)+Y^{v}\left(\theta_{L_{h}}(X^{c})\right)+\theta_{L_{h}}([X^{c},Y^{v}]).
	\end{equation*}
	Using \eqref{thetalift} and \eqref{liebralift}
	\begin{equation*}
	\omega_{L_{h}}(X^{c},Y^{v})=Y^{v}\left(\widehat{\flat_{h}(X)}\right)+\theta_{L_{h}}([X,Y]^{v}).
	\end{equation*}
	Again using \eqref{thetalift} we conclude
	\begin{equation*}
	\omega_{L_{h}}(X^{c},Y^{v})=d\left(\widehat{\flat_{h}(X)}\right)(Y^{v}).
	\end{equation*}
	Note also that $\theta_{L_{(\nabla^{h} X)}}(Y^{v})=0$ is also implied by \eqref{thetalift}. Therefore we have concluded the proof of the expression for $\flat_{\omega_{L_{h}}}(X^{c})$.
	
	Let us now prove the expression for $\flat_{\omega_{L_{h}}}(X^{v})$. Let us use the same strategy and compute 
	\begin{equation*}
	\omega_{L_{h}}(X^{v},Y^{c})=-X^{v}\left(\theta_{L_{h}}(Y^{c})\right)+Y^{c}\left(\theta_{L_{h}}(X^{v})\right)+\theta_{L_{h}}([X^{v},Y^{c}]).
	\end{equation*}
	Analogously,
	\begin{equation*}
	\omega_{L_{h}}(X^{v},Y^{c})=-X^{v}\left(\widehat{\flat_{h}(Y)}\right).
	\end{equation*}
	Now, using the definition of vertical lift
	\begin{equation*}
	\omega_{L_{h}}(X^{v},Y^{c})=-\flat_{h}(Y)(X)\circ\tau_Q.
	\end{equation*}
	By symmetry of $h$, we may rewrite the last line as
	\begin{equation*}
	\omega_{L_{h}}(X^{v},Y^{c})=-\flat_{h}(X)(Y)\circ\tau_Q.
	\end{equation*}
	Notice that the right-hand side of the previous equation is nothing more than the vertical lift of the function $\flat_{h}(X)(Y)$. Using \eqref{alphalift}, we finally get
	\begin{equation*}
	\omega_{L_{h}}(X^{v},Y^{c})=-(\flat_{h}(X))^{v}(Y^{c}).
	\end{equation*}
	At last, we need to check how the symplectic form acts on vertical lifts. However, note that using \eqref{thetalift} and \eqref{liebralift} then the expression
	\begin{equation*}
	\omega_{L_{h}}(X^{v},Y^{v})=-X^{v}\left(\theta_{L_{h}}(Y^{v})\right)+Y^{v}\left(\theta_{L_{h}}(X^{v})\right)+\theta_{L_{h}}([X^{v},Y^{v}])
	\end{equation*}
	vanishes for all $X$ and $Y$, as it is the case of $(\flat_{h}(X))^{v}(Y^{v})$ again by \eqref{alphalift}, which finishes the proof.
\end{proof}


\section{The canonical involution and the complete lift of a Lagrangian system of kinetic type}\label{appc}

In this appendix, we will review some results related with the canonical involution on the double tangent bundle of a manifold \citep*{Tulcz} and the complete lift of a regular Lagrangian system of kinetic type.

Let $Q$ be a smooth manifold of dimension $n$, $\tau_{Q}:TQ\rightarrow Q$ the canonical projection and $TTQ$ the double tangent bundle to $Q$. Then, $TTQ$ admits two vector bundle structures.

The first vector bundle structure is the canonical one with vector bundle projection $\tau_{TQ}:TTQ\rightarrow TQ$.

For the second vector bundle structure, the vector bundle projection is just the tangent map to $\tau_{Q}$, that is, $T\tau_{Q}:TTQ\rightarrow TQ$ and the addition operation on the fibers is just the tangent map $T(+):TTQ\times_{TQ} TTQ\rightarrow TTQ$ of the addition operation $(+):TQ\times_{Q}TQ\rightarrow TQ$ on the fibers of $\tau_{Q}$.

The canonical involution $\kappa_{Q}:TTQ\rightarrow TTQ$ is a vector bundle isomorphism (over the identity of $TQ$) between the two previous vector bundles. In fact, $\kappa_{Q}$ is characterized by the following condition: let $\Phi:U\subseteq \R^{2}\rightarrow Q$ be a smooth map, with $U$ an open subset of $\R^{2}$
\begin{equation*}
(t,s)\mapsto \Phi(t,s)\in Q.
\end{equation*}
Then,
\begin{equation}\label{def:can:invol}
\kappa_{Q}\left( \frac{d}{dt}\frac{d}{ds} \Phi(t,s) \right)= \frac{d}{ds}\frac{d}{dt} \Phi(t,s).
\end{equation}
So, we have that $\kappa_{Q}$ is an involution of $TTQ$, that is, $\kappa_{Q}^{2}=id_{TTQ}$.

In fact, if $(q^{i},\dot{q}^{i})$ are canonical fibred coordinates on $TQ$ and $(q^{i},\dot{q}^{i},v^{i},\dot{v}^{i})$ are the corresponding local fibred coordinates on $TTQ$ then
\begin{equation}\label{def:local:can:invol}
\kappa_{Q}(q^{i},\dot{q}^{i},v^{i},\dot{v}^{i})=(q^{i},v^{i},\dot{q}^{i},\dot{v}^{i}).
\end{equation}
$\kappa_{Q}$ may be characterized in a more intrinsic way, using the theory of complete and vertical lifts to $TQ$.

Indeed, if $X:Q\rightarrow TQ$ is a vector field on $Q$ then
\begin{equation}\label{int:def:can:invol}
\kappa_{Q}\circ X^{c}=TX, \quad \kappa_{Q}\circ X^{v}=\widetilde{X}^{v},
\end{equation}
where $TX:TQ\rightarrow TTQ$ is the tangent map to $X$ (a section of the vector bundle $T\tau_{Q}$) and $\widetilde{X}^{v}:TQ\rightarrow TTQ$ is the section of the vector bundle $T\tau_{Q}$ given by
\begin{equation*}
	\widetilde{X}^{v}(u)=(T_{q}0)(u)+X^{v}(0(q)), \quad u\in T_{q}Q,
\end{equation*}
with $0:Q\rightarrow TQ$ the zero section.

Note that, from \eqref{def:can:invol}, it follows that
\begin{equation}\label{diff:can:invol}
	\begin{split}
		& T\kappa_{Q}\circ (X^{c})^{c}=(X^{c})^{c}\circ \kappa_{Q}, \quad T\kappa_{Q} \circ (X^{v})^{v}=(X^{v})^{v}\circ \kappa_{Q}, \\
		& T\kappa_{Q}\circ (X^{c})^{v}=(X^{v})^{c}\circ \kappa_{Q}, \quad T\kappa_{Q} \circ (X^{v})^{c}=(X^{c})^{v}\circ \kappa_{Q},
	\end{split}
\end{equation}
for $X\in\mathfrak{X}(Q)$.

As a consequence, we also deduce that
\begin{equation}\label{codiff:can:invol}
\begin{split}
& \kappa_{Q}^{*} ((\alpha^{c})^{c})=(\alpha^{c})^{c}\circ \kappa_{Q}, \quad \kappa_{Q}^{*}( (\alpha^{v})^{v})=(\alpha^{v})^{v}\circ \kappa_{Q}, \\
& \kappa_{Q}^{*}((\alpha^{c})^{v})=(\alpha^{v})^{c}\circ \kappa_{Q}, \quad \kappa_{Q}^{*} ((\alpha^{v})^{c})=(\alpha^{c})^{v}\circ \kappa_{Q},
\end{split}
\end{equation}
for $\alpha\in\Omega^{1}(Q)$.

Now suppose that $g$ is a Riemannian metric on $Q$ and that $L_{g}:TQ\rightarrow \R$ is the Lagrangian function of kinetic type induced by $g$ (see Appendix B). Then, we may consider the complete lift $g^{c}$ of $g$ \citep*{YaIs73}. It is not a Riemannian metric on $TQ$ but a pseudo-Riemannian metric of signature $(n,n)$. In fact $g^{c}$ is characterized by the following conditions
\begin{equation}\label{def:complete:lift:g}
\begin{split}
& g^{c}(X^{c},Y^{c})=(g(X,Y))^{c}, \\
& g^{c}(X^{c},Y^{v})=g^{c}(X^{v},Y^{c})=(g(X,Y))^{v}, \\
& g^{c}(X^{v},Y^{v})=0,
\end{split}
\end{equation}
for $X,Y\in\mathfrak{X}(Q)$.

If $(q^{i},\dot{q}^{i})$ are local coordinates on $TQ$ and the local expression of the Riemannian metric $g$ is $g=g_{ij} dq^{i}\otimes dq^{j}$ then the local expression of its complete lift is
\begin{equation*}
g^{c}(q^{i},\dot{q}^{i})=\dot{q}^{k}\frac{\partial g_{ij}}{\partial q^{k}} dq^{i}\otimes dq^{j}+ g_{ij}dq^{i}\otimes d\dot{q}^{j}+g_{ij}d\dot{q}^{i}\otimes dq^{j}.
\end{equation*}

Anyway, we may consider the Lagrangian function $L_{g^{c}}:TTQ\rightarrow \R$ on $TTQ$ induced by the pseudo-Riemannian metric $g^{c}$ on $TQ$. Then, the relation of the previous construction with the canonical involution is given by the following result:

\begin{lemma} \label{liftedLagrangianlemma}
	We have that the Lagrangian function $L_{g^{c}}:TTQ\rightarrow \R$ is regular and satisfies the following equation
	\begin{equation}
	L_{g^{c}}=L_{g}^{c}\circ \kappa_{Q}.
	\end{equation}
\end{lemma}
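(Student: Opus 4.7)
The plan is to prove both claims by direct computation in fibred coordinates, using the local formulas for the canonical involution and the complete lift of a function recalled in Appendices A and C. Let $(q^i,\dot q^i,v^i,\dot v^i)$ be adapted coordinates on $TTQ$, so that a vector $w\in T_uTQ$ with $u=(q^i,\dot q^i)\in TQ$ is written $w=v^i\partial/\partial q^i+\dot v^i\partial/\partial\dot q^i$. Write $L_g=\tfrac12 g_{ij}(q)\dot q^i\dot q^j$.

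First I would compute $L_g^c\circ\kappa_Q$. Since $L_g^c$ is by \eqref{function:lifts} the fibrewise linear function on $TTQ$ induced by $dL_g\in\Omega^1(TQ)$, its coordinate expression is
\begin{equation*}
L_g^c(q^i,\dot q^i,v^i,\dot v^i)=v^k\frac{\partial L_g}{\partial q^k}+\dot v^k\frac{\partial L_g}{\partial\dot q^k}=\tfrac12 v^k\frac{\partial g_{ij}}{\partial q^k}\dot q^i\dot q^j+g_{ij}\dot v^i\dot q^j.
\end{equation*}
Precomposing with $\kappa_Q$, whose local formula \eqref{def:local:can:invol} simply swaps $\dot q^i\leftrightarrow v^i$, yields
\begin{equation*}
(L_g^c\circ\kappa_Q)(q^i,\dot q^i,v^i,\dot v^i)=\tfrac12\dot q^k\frac{\partial g_{ij}}{\partial q^k}v^iv^j+g_{ij}\dot v^i v^j.
\end{equation*}

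Next, using the local expression for the complete lift $g^c$ displayed immediately before the lemma, I would evaluate $L_{g^c}(w)=\tfrac12 g^c(w,w)$ in the same coordinates. The two cross terms $g_{ij}v^i\dot v^j$ and $g_{ij}\dot v^i v^j$ coincide after relabelling indices by the symmetry of $g_{ij}$, and one recovers precisely the expression obtained for $L_g^c\circ\kappa_Q$ above. This proves the identity $L_{g^c}=L_g^c\circ\kappa_Q$.

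For regularity, I would read off from that same expression the Hessian of $L_{g^c}$ with respect to the fibre coordinates $(v^i,\dot v^i)$ of $\tau_{TQ}:TTQ\to TQ$; it is the block matrix $\begin{pmatrix}\dot q^k\partial_k g_{ij} & g_{ij}\\ g_{ij} & 0\end{pmatrix}$, whose determinant equals $\pm(\det g_{ij})^2\neq 0$ since $g$ is non-degenerate, so $L_{g^c}$ is regular. The argument presents no conceptual difficulty; the only hazard is purely bookkeeping, namely that the symbols $\dot q^i$ and $v^i$ exchange roles under $\kappa_Q$, so inside the composition $L_g^c\circ\kappa_Q$ the derivatives $\partial L_g/\partial q^k$ and $\partial L_g/\partial\dot q^k$ must be understood as evaluated at the swapped base point $(q^i,v^i)$ before the expression is matched against $L_{g^c}(w)$.
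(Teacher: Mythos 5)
Your proof is correct, but it takes a genuinely different route from the paper's. You argue entirely in adapted coordinates: the identity $L_{g^{c}}=L_{g}^{c}\circ\kappa_{Q}$ drops out of the local formulas \eqref{function:lifts} and \eqref{def:local:can:invol} together with the displayed coordinate expression of $g^{c}$, after the index relabelling you flag, and regularity follows from the explicit fibre Hessian $\bigl(\begin{smallmatrix}\dot q^{k}\partial g_{ij}/\partial q^{k} & g_{ij}\\ g_{ij} & 0\end{smallmatrix}\bigr)$, whose determinant is $(-1)^{n}(\det g_{ij})^{2}\neq 0$. The paper instead argues intrinsically: it decomposes an arbitrary $Z\in T_{u}(TQ)$ as $X^{c}(u)+Y^{v}(u)$, uses that $\kappa_{Q}$ is a vector bundle isomorphism between $\tau_{TQ}$ and $T\tau_{Q}$ together with \eqref{int:def:can:invol}, and evaluates both sides along suitable curves to show that each equals $\frac{1}{2}u(g(X,X))+g(X(q),Y(q))$; regularity is disposed of in one line by observing that the fibre Hessian of $L_{g^{c}}$ is the non-degenerate tensor $g^{c}$. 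Your computation is shorter and self-verifying: it simultaneously produces the coordinate formula \eqref{complete:g:Lagrangian} that the paper records separately, and your determinant calculation independently confirms the non-degeneracy of $g^{c}$ (equivalently, that it has signature issues only, never a kernel) rather than quoting it. The trade-offs are that you rest on the stated (standard) local expression of $g^{c}$, whereas the paper works directly from the intrinsic characterization \eqref{def:complete:lift:g}, and that one must recognize that a chart-by-chart verification suffices for a global identity --- which it does here, since both sides are globally defined smooth functions on $TTQ$ and the natural charts cover $TTQ$. The paper's longer argument buys a coordinate-free proof that also exercises the lift/involution calculus used throughout its appendices.
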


\begin{proof}
	The Lagrangian function $L_{g^{c}}$ is regular since its Hessian matrix is the tensor $g^{c}$ which is a non-degenerate tensor. In fact, it is a pseudo-Riemannian metric. 
	
	If $Z\in T_{u}(TQ)$, with $u\in T_{q} Q$, then it is easy to see that there exist vector fields $X,Y\in \mathfrak{X}(Q)$ such that
	\begin{equation*}
		Z=X^{c}(u)+Y^{v}(u).
	\end{equation*}
	So, it is sufficient to prove that
	\begin{equation*}
		L_{g^{c}} (X^{c}(u)+Y^{v}(u)) =L_{g}^{c}\circ \kappa_{Q}(X^{c}(u)+Y^{v}(u)).
	\end{equation*}
	Now, since $\kappa_{Q}$ is a vector bundle isomorphism between the vector bundles $\tau_{TQ}$ and $T\tau_{Q}$, it follows that
	\begin{equation*}
		\kappa_{Q}(X^{c}(u)+Y^{v}(u))=(T_{(u,0(q))}(+))(\kappa_{Q}(X^{c}(u)),\kappa_{Q}(Y^{v}(u))),
	\end{equation*}
	and then from \eqref{int:def:can:invol} and the definition of complete lift of a function we deduce
	\begin{equation*}
		L_{g}^{c}\circ \kappa_{Q}(X^{c}(u)+Y^{v}(u))=(T_{(u,0(q))}(+))(TX(u),\widetilde{Y}^{v}(u))(L_{g}).
	\end{equation*}
	where $(+):TQ\times_{Q}TQ\rightarrow TQ$ in the right-hand side of the equality is the addition on the fibers of the vector bundle $\tau_{Q}:TQ\rightarrow Q$. So we have that
	\begin{equation}\label{first:equality}
		L_{g}^{c}\circ \kappa_{Q}(X^{c}(u)+Y^{v}(u))=(TX(u),\widetilde{Y}^{v}(u))(L_{g}\circ (+)).
	\end{equation}
	Next, let $\sigma:(-\varepsilon,\varepsilon)\rightarrow Q$ be a curve on $Q$ such that
	\begin{equation*}
		\sigma(0)=q, \quad \dot{\sigma}(0)=u
	\end{equation*}
	and $Z:(-\varepsilon,\varepsilon)\rightarrow TQ$ a curve over $\sigma$ satisfying
	\begin{equation}\label{def:Z}
		Z(0)=0(q), \quad \dot{Z}(0)=\widetilde{Y}^{v}(u)=(T_{q}0)(u)+Y^{v}(0(q)).
	\end{equation}
	Then, from \eqref{first:equality}, it follows that
	\begin{equation*}
	L_{g}^{c}\circ \kappa_{Q}(X^{c}(u)+Y^{v}(u))=\left. \frac{d}{dt} \right|_{t=0}(L_{g}\circ (+))(X(\sigma(t)),Z(t)),
	\end{equation*}
	hence
	\begin{equation*}
		\begin{split}
			L_{g}^{c}\circ \kappa_{Q}(X^{c}(u)+Y^{v}(u))=\left. \frac{d}{dt} \right|_{t=0} & \left( \frac{1}{2}g(X(\sigma(t)),X(\sigma(t))) \right. \\
			& \left.+g(X(\sigma(t)),Z(t))+ \frac{1}{2}g(Z(t),Z(t)) \right),
		\end{split}
	\end{equation*}
	Thus, using \eqref{def:Z} and the following equalities
	\begin{equation*}
	\begin{split}
	\left. \frac{d}{dt} \right|_{t=0} g(X(\sigma(t)),X(\sigma(t)))& =\dot{\sigma}(0)(g(X,X)), \\
	\left. \frac{d}{dt} \right|_{t=0} g(X(\sigma(t)),Z(t)) & =(\Le_{\dot{\sigma}} g)|_{t=0}(X(q),Z(0)) \\
	 & +g(\Le_{\dot{\sigma}}X(\sigma(t))|_{t=0},Z(0))+g(X(q),\Le_{\dot{\sigma}}Z(t)|_{t=0}), \\
	 \Le_{\dot{\sigma}}Z(t)|_{t=0} & = Y(q),
	\end{split}
	\end{equation*}
	we deduce that
	\begin{equation*}
		L_{g}^{c}\circ \kappa_{Q}(X^{c}(u)+Y^{v}(u))=\frac{1}{2}u(g(X,X))+g(X(q),Y(q)).
	\end{equation*} 
	On the other hand, from \eqref{def:complete:lift:g}, we deduce that
	\begin{equation*}
		\begin{split}
			L_{g^{c}}(X^{c}(u)+Y^{v}(u)) & = \frac{1}{2}g^{c}(u)(X^{c}(u)+Y^{v}(u),X^{c}(u)+Y^{v}(u)) \\
				& = \frac{1}{2}(g(X,X))^{c}(u)+g(X(q),Y(q)) \\
				& = \frac{1}{2}u(g(X,X))+g(X(q),Y(q)),
		\end{split}
	\end{equation*}
	which proves the Lemma.
\end{proof}

In local coordinates, the Lagrangian function associated to $g^{c}$ has the local expression
\begin{equation}\label{complete:g:Lagrangian}
L_{g^{c}}(q,\dot{q},v,\dot{v})=\frac{1}{2}\dot{q}^{k}\frac{\partial g_{ij}}{\partial q^{k}} v^{i}v^{j}+ g_{ij}v^{i} \dot{v}^{j}.
\end{equation}

\begin{proposition}\label{liftedforms}
	Let $g$ be a Riemannian metric on $Q$, $g^{c}$ the complete lift of $g$ to $TQ$, $L_{g}:TQ\rightarrow\R$ and $L_{g^{c}}:TTQ\rightarrow \R$ the corresponding Lagrangian functions. Then
	\begin{equation*}
		\theta_{L_{g^{c}}}=\kappa_{Q}^{*}(\theta_{L_{g}}^{c}), \ \omega_{L_{g^{c}}}=\kappa_{Q}^{*}(\omega_{L_{g}}^{c}), \text{ and } E_{L_{g^{c}}}=(E_{L_{g}})^{c}\circ\kappa_{Q},
	\end{equation*}
	where $\theta_{L_{g}}$ (respectively $\theta_{L_{g^{c}}}$), $\omega_{L_{g}}$ (respectively $\omega_{L_{g^{c}}}$) and $E_{L_{g}}$ (respectively $E_{L_{g^{c}}}$) are the Poincaré-Cartan 1-form, the Poincaré-Cartan 2-form  and the Lagrangian energy associated with $L_{g}$ (respectively with $L_{g^{c}}$).
\end{proposition}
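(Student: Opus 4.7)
The proof splits into three identities which I would establish in sequence, each building on the preceding one.

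For the energy identity, the key observation is that both $L_g$ and $L_{g^c}$ are fiberwise quadratic on the vector bundles $\tau_Q:TQ\to Q$ and $\tau_{TQ}:TTQ\to TQ$ respectively. The quadratic nature of $L_{g^c}$ can be read directly off \eqref{complete:g:Lagrangian}, or equivalently noted from the fact that $g^c$ is a (non-degenerate) symmetric $(0,2)$-tensor on $TQ$. Each Liouville field applied to the respective Lagrangian therefore doubles it, giving $E_{L_g}=L_g$ and $E_{L_{g^c}}=L_{g^c}$. Combined with Lemma \ref{liftedLagrangianlemma}, this yields $E_{L_{g^c}}=L_{g^c}=L_g^c\circ\kappa_Q=(E_{L_g})^c\circ\kappa_Q$.

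For the Poincar\'e--Cartan 1-form, I would start from the definition $\theta_{L_{g^c}}=(S^T)^*dL_{g^c}$ and use Lemma \ref{liftedLagrangianlemma} together with $d(f^c)=(df)^c$ (which is \eqref{differential:completelift} applied to a function) to rewrite $dL_{g^c}=d(L_g^c\circ\kappa_Q)=\kappa_Q^*(dL_g)^c$. This reduces the claim to the more abstract identity
\begin{equation*}
(S^T)^*\kappa_Q^*\beta^c=\kappa_Q^*(S_{TQ}^*\beta)^c,
\end{equation*}
valid for any 1-form $\beta$ on $TQ$, applied to $\beta=dL_g$. This identity I would verify directly in the natural coordinates $(q^i,\dot q^i,v^i,\dot v^i)$ on $TTQ$: writing $\beta=\beta^1_i\,dq^i+\beta^2_i\,d\dot q^i$, compute $\beta^c$ via the standard coordinate expression for complete lifts of 1-forms, apply $\kappa_Q^*$ using the coordinate swap $(q,\dot q,v,\dot v)\mapsto(q,v,\dot q,\dot v)$ from \eqref{def:local:can:invol}, and finally apply $(S^T)^*$, which sends the $dv^i$- and $d\dot v^i$-coefficients to $dq^i$ and $d\dot q^i$ respectively. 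A routine comparison shows that both sides reduce to the same 1-form.

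The 2-form identity is then a formal consequence of the 1-form identity obtained by applying $-d$ to both sides, using that $d$ commutes with pullbacks and that $d(\eta^c)=(d\eta)^c$ from \eqref{differential:completelift}:
\begin{equation*}
\omega_{L_{g^c}}=-d\theta_{L_{g^c}}=-d\kappa_Q^*\theta_{L_g}^c=-\kappa_Q^*(d\theta_{L_g})^c=\kappa_Q^*(-d\theta_{L_g})^c=\kappa_Q^*\omega_{L_g}^c.
\end{equation*}
The only real obstacle is the coordinate verification in the 1-form step. It is a routine but somewhat involved bookkeeping exercise with the complete-lift and $\kappa_Q$ formulas; I do not see a genuinely coordinate-free proof of the abstract identity $(S^T)^*\kappa_Q^*\beta^c=\kappa_Q^*(S_{TQ}^*\beta)^c$ that does not essentially re-encode the same computation via \eqref{diff:can:invol}--\eqref{codiff:can:invol}.
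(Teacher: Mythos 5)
Your proof is correct, and its overall architecture coincides with the paper's: the energy identity comes from quadratic homogeneity ($E_{L_g}=L_g$ and $E_{L_{g^c}}=L_{g^c}$, since the Liouville field doubles a fiberwise quadratic Lagrangian) combined with Lemma \ref{liftedLagrangianlemma}, and the 2-form identity follows from the 1-form identity because $d$ commutes with pullbacks and with complete lifts \eqref{differential:completelift}. The genuine difference is in how the 1-form identity is established. The paper stays intrinsic: it evaluates $\theta_{L_{g^{c}}}$ and $\kappa_{Q}^{*}(\theta_{L_{g}}^{c})$ on the four families of double lifts $(X^{c})^{c}$, $(X^{c})^{v}$, $(X^{v})^{c}$, $(X^{v})^{v}$ with $X\in\mathfrak{X}(Q)$, which span the tangent spaces of $TTQ$ pointwise, and reduces each evaluation by chaining Lemma \ref{liftedLagrangianlemma} with \eqref{Slifts}, \eqref{1formcomplete}, \eqref{alphalift} and \eqref{diff:can:invol}. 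You instead extract the Lagrangian-independent naturality identity $(S^{T})^{*}\kappa_{Q}^{*}\beta^{c}=\kappa_{Q}^{*}(S^{*}\beta)^{c}$ for arbitrary $\beta\in\Omega^{1}(TQ)$ and verify it in the coordinates of \eqref{def:local:can:invol}; this computation does close up: writing $\beta=\beta^{1}_{i}\,dq^{i}+\beta^{2}_{i}\,d\dot{q}^{i}$, on both sides only the $\beta^{2}_{i}$-data survive, pulled back through the swap $(q^{i},\dot{q}^{i},v^{i},\dot{v}^{i})\mapsto(q^{i},v^{i},\dot{q}^{i},\dot{v}^{i})$, and the resulting 1-forms agree. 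Your formulation buys generality — the identity is independent of the Lagrangian, so it would also cover at once the mechanical Lagrangians of Appendix \ref{appd} (where the paper instead goes through Lemma \ref{L:lemma} to reduce to the kinetic case) — at the price of a coordinate check; the paper's route avoids coordinates entirely but needs the full suite of lift identities and treats only $\beta=dL_{g}$. Your closing remark is accurate: an intrinsic proof of your general identity would amount to testing it on double lifts via \eqref{diff:can:invol}--\eqref{codiff:can:invol}, which is precisely the computation the paper carries out in its special case.
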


\begin{proof}
	Notice that once we establish that $\theta_{L_{g^{c}}}=\kappa_{Q}^{*}(\theta_{L_{g}}^{c})$, then the corresponding formula for the Poincaré-Cartan 2-forms follows since the pullback commutes with the differential and the differential of the complete lift behaves according to \eqref{differential:completelift}.
	
	It is sufficient to prove $\theta_{L_{g^{c}}}=\kappa_{Q}^{*}(\theta_{L_{g}}^{c})$ over the vector fields $(X^{c})^{c}$, $(X^{c})^{v}$, $(X^{v})^{c}$ and $(X^{v})^{v}$, with $X\in \mathfrak{X}(Q)$. Now,
	\begin{equation*}
		\theta_{L_{g^{c}}}((X^{c})^{c})=dL_{g^{c}}((X^{c})^{v})=d(L_{g}^{c}\circ \kappa_{Q})((X^{c})^{v})=(\kappa_{Q})^{*}dL_{g}^{c}((X^{c})^{v}),
	\end{equation*}
	where we used the definition of the Poincaré-Cartan 1-form and the properties in \eqref{Slifts} in the first equality, while we used Lemma \ref{liftedLagrangianlemma} in the second equality. Then using the definition of pullback together with \eqref{diff:can:invol} we get
	\begin{equation*}
		\theta_{L_{g^{c}}}((X^{c})^{c})=(dL_{g}^{c}\circ \kappa_{Q})((X^{v})^{c}\circ \kappa_{Q})=(dL_{g}^{c}((X^{v})^{c}))\circ \kappa_{Q}=(dL_{g}(X^{v}))^{c}\circ \kappa_{Q},
	\end{equation*}
	where we used \eqref{1formcomplete} and \eqref{differential:completelift} in the last equality. Now, using the definition of $\theta_{L_{g}}$ followed by \eqref{1formcomplete} again we obtain
	\begin{equation*}
	\theta_{L_{g^{c}}}((X^{c})^{c})=(\theta_{L_{g}}(X^{c}))^{c}\circ \kappa_{Q}=\theta_{L_{g}}^{c}((X^{c})^{c})\circ \kappa_{Q}.
	\end{equation*}
	Finally, using \eqref{diff:can:invol} we obtain
	\begin{equation*}
	\theta_{L_{g^{c}}}((X^{c})^{c})=(\kappa_{Q}^{*}(\theta_{L_{g}}^{c}))((X^{c})^{c}).
	\end{equation*}
	By applying the same arguments we may deduce the remaining expressions. Indeed
	\begin{equation*}
		\theta_{L_{g^{c}}}((X^{c})^{v})=0=(\theta_{L_{g}}(X^{v}))^{c}\circ \kappa_{Q}=\theta_{L_{g}}^{c}((X^{v})^{c})\circ \kappa_{Q}=(\kappa_{Q}^{*}(\theta_{L_{g}}^{c}))((X^{c})^{v}),
	\end{equation*}
	where we used the definition of the Poincaré-Cartan 1-forms and \eqref{1formcomplete}, \eqref{Slifts} and \eqref{diff:can:invol}. Also using these arguments and Lemma \ref{liftedLagrangianlemma} we deduce
	\begin{equation*}
		\begin{split}
			\theta_{L_{g^{c}}}((X^{v})^{c}) & =dL_{g^{c}}((X^{v})^{v})=d(L_{g}^{c}\circ \kappa_{Q})((X^{v})^{v})=(\kappa_{Q})^{*}dL_{g}^{c}((X^{v})^{v}) \\
			& =(dL_{g}^{c}((X^{v})^{v}))\circ \kappa_{Q}=(dL_{g}(X^{v}))^{v}\circ \kappa_{Q},
		\end{split}
	\end{equation*}
	where the last equality follows from \eqref{alphalift}. Then
	\begin{equation*}
	\theta_{L_{g^{c}}}((X^{v})^{c})=(\theta_{L_{g}}(X^{c}))^{v}\circ \kappa_{Q}=\theta_{L_{g}}^{c}((X^{c})^{v})\circ \kappa_{Q}=(\kappa_{Q}^{*}(\theta_{L_{g}}^{c}))((X^{v})^{c}).
	\end{equation*}
	The last expression follows directly from \eqref{alphalift} and \eqref{Slifts}.
	
	On the other hand,
	\begin{equation*}
	\theta_{L_{g^{c}}}((X^{v})^{v})=0=(\theta_{L_{g}}(X^{v}))^{v}\circ \kappa_{Q}=\theta_{L_{g}}^{c}((X^{v})^{v})\circ \kappa_{Q}=(\kappa_{Q}^{*}(\theta_{L_{g}}^{c}))((X^{v})^{v}).
	\end{equation*}
	
	To prove the expression relating the Lagrangian energies, denote first by $\Delta_{TQ}\in\mathfrak{X}(TQ)$ the Liouville vector field on the tangent bundle and by $\Delta_{TTQ}\in\mathfrak{X}(TTQ)$ the Liouville vector field on the double tangent bundle. Recall the definition of Lagrangian energy
	\begin{equation*}
		E_{L_{g^{c}}}=\Delta_{TTQ}(L_{g^{c}})-L_{g^{c}} \text{ and } E_{L_{g}}=\Delta_{TQ}(L_{g})-L_{g}.
	\end{equation*}
	Moreover, note that since the Lagrangian functions are of kinetic type, then
	\begin{equation*}
		\Delta_{TTQ}(L_{g^{c}})=2L_{g}^{c} \text{ and } \Delta_{TQ}(L_{g})=2L_{g}.
	\end{equation*}
	Then, using Lemma \ref{liftedLagrangianlemma} we have that
	\begin{equation*}
	E_{L_{g^{c}}}=L_{g^{c}}=L_{g}^{c}\circ \kappa_{Q}=(E_{L_{g}})^{c}\circ \kappa_{Q}.
	\end{equation*}
\end{proof}
{Finally, we will describe the dynamics associated with the Lagrangian function $L_{g^c}: TTQ \to \R$ in terms of the complete lift of the geodesic flow associated with $g$. In addition, we will prove that the trajectories of the Lagrangian system $(TTQ, L_{g^c})$ are just the Jacobi fields of the Riemannian metric $g$.}
\begin{proposition}
		Let $g$ be a Riemannian metric on $Q$, $g^{c}$ the complete lift of $g$ to $TQ$, $L_{g}:TQ\rightarrow\R$ and $L_{g^{c}}:TTQ\rightarrow \R$ the corresponding Lagrangian functions, $\Gamma_{L_{g}}$ and $\Gamma_{L_{g^{c}}}$ the corresponding Lagrangian vector fields on $TQ$ and $TTQ$, respectively. Then
		\begin{enumerate}
			\item $\Gamma_{L_{g^{c}}}=T\kappa_{Q}\circ \Gamma_{L_{g}}^{c} \circ \kappa_{Q}$;
			\item If $Z:I\rightarrow TQ$ is a trajectory of the SODE $\Gamma_{L_{g^{c}}}$, then $Z$ is a Jacobi field for $g$ over a geodesic $c_{v}:I\rightarrow Q$ of $g$.
		\end{enumerate}
\end{proposition}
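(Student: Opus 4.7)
My plan is to follow the same strategy used in the proof of Theorem \ref{MainTheorem}, which is essentially the case $\D = TQ$ of the present proposition: the nonholonomic machinery drops out and only the complete-lift and canonical-involution identities assembled in Appendices A--C remain.

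For item (1), I would begin with the intrinsic equation defining $\Gamma_{L_g}$ on $TQ$, namely $i_{\Gamma_{L_g}}\omega_{L_g} = dE_{L_g}$. Taking the complete lift of both sides and invoking \eqref{differential:completelift} and \eqref{inner:complete} yields the identity on $TTQ$
\begin{equation*}
i_{\Gamma_{L_g}^{c}}\omega_{L_g}^{c} = dE_{L_g}^{c}.
\end{equation*}
Pulling back via $\kappa_Q$ and substituting $\kappa_Q^{*}\omega_{L_g}^{c}=\omega_{L_{g^{c}}}$ and $\kappa_Q^{*}E_{L_g}^{c}=E_{L_{g^{c}}}$ from Proposition \ref{liftedforms} gives
\begin{equation*}
i_{(\kappa_Q)_{*}\Gamma_{L_g}^{c}}\,\omega_{L_{g^{c}}} = dE_{L_{g^{c}}},
\end{equation*}
where $(\kappa_Q)_{*}\Gamma_{L_g}^{c} = T\kappa_Q\circ \Gamma_{L_g}^{c}\circ \kappa_Q$ (using that $\kappa_Q$ is an involution). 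Since $L_{g^{c}}$ is regular by Lemma \ref{liftedLagrangianlemma}, $\omega_{L_{g^{c}}}$ is symplectic and the Lagrangian vector field $\Gamma_{L_{g^{c}}}$ is the unique SODE on $TQ$ solving this equation, so item (1) follows as soon as one checks that $(\kappa_Q)_{*}\Gamma_{L_g}^{c}$ is itself a SODE. This last verification amounts to chasing the commutative diagram for $\kappa_Q$: $\Gamma_{L_g}^{c}$ is a SODE on $TQ$ with respect to the secondary bundle structure $T\tau_Q:TTQ\to TQ$ (because $\Gamma_{L_g}$ is already a SODE on $Q$ and the complete lift of a SODE is a SODE in this sense), and $\kappa_Q$ is a vector bundle isomorphism over the identity of $TQ$ swapping $T\tau_Q$ with $\tau_{TQ}$, so the conjugate is a SODE on $TQ$ in the standard sense.

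For item (2), let $Z:I\to TQ$ be a trajectory of $\Gamma_{L_{g^{c}}}$, so $\dot Z:I\to TTQ$ is an integral curve of $\Gamma_{L_{g^{c}}}$. By item (1), $\kappa_Q\circ\dot Z$ is an integral curve of $\Gamma_{L_g}^{c}$; since the flow of a complete lift is the tangent flow, and using $\tau_{TQ}\circ\kappa_Q = T\tau_Q$ so that $\tau_{TQ}(\kappa_Q(\dot Z(0))) = T\tau_Q(\dot Z(0)) = Z(0)$, we obtain
\begin{equation*}
\kappa_Q\circ\dot Z(t) = T_{Z(0)}\phi_t^{\Gamma_{L_g}}\bigl(\kappa_Q\circ\dot Z(0)\bigr).
\end{equation*}
Applying $T\tau_Q$ to both sides and using $T\tau_Q\circ\kappa_Q = \tau_{TQ}$ on the left yields
\begin{equation*}
Z(t) = T_{Z(0)}\bigl(\tau_Q\circ\phi_t^{\Gamma_{L_g}}\bigr)\bigl(\kappa_Q\circ\dot Z(0)\bigr).
\end{equation*}
Picking any curve $v:(-\varepsilon,\varepsilon)\to TQ$ with $v(0)=Z(0)$ and $v'(0)=\kappa_Q\circ\dot Z(0)$, the right-hand side rewrites as $\tfrac{d}{ds}\big|_{s=0}(\tau_Q\circ\phi_t^{\Gamma_{L_g}})(v(s))$, exhibiting $Z$ as the infinitesimal variation of the family of geodesics $c_{v(s)}(t)=\tau_Q(\phi_t^{\Gamma_{L_g}}(v(s)))$, hence as a Jacobi field of $g$ along the geodesic $c_{v(0)}$.

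The only nontrivial point is the SODE verification in item (1); once that is handled through the double vector bundle structure of $TTQ$ and the properties of $\kappa_Q$ recorded in Appendix C, the remainder of the argument is a direct transcription (with $\D=TQ$) of the proof of Theorem \ref{MainTheorem} and reduces to flow-level bookkeeping.
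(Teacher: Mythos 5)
Your proposal is correct and follows essentially the same route as the paper's own proof: both arguments complete-lift the defining equation $i_{\Gamma_{L_{g}}}\omega_{L_{g}}=dE_{L_{g}}$, transport it through $\kappa_{Q}$ using Proposition \ref{liftedforms} together with \eqref{differential:completelift} and \eqref{inner:complete}, and conclude by uniqueness, while your treatment of item (2) reproduces the paper's flow argument step for step. The only (harmless) difference is that your extra SODE verification is superfluous: since $\omega_{L_{g^{c}}}$ is symplectic, the equation $i_{X}\omega_{L_{g^{c}}}=dE_{L_{g^{c}}}$ has a unique solution among \emph{all} vector fields on $TTQ$, which is precisely how the paper concludes, with no need to check separately that $(\kappa_{Q})_{*}\Gamma_{L_{g}}^{c}$ is a second-order vector field.
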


\begin{proof}
	Let $w\in TTQ$ and $X\in T_{w}(TTQ)$. Then we have that
	\begin{equation*}
		\left(i_{(T_{w}\kappa_{Q})(\Gamma_{L_{g}}^{c}(w))} \omega_{L_{g^{c}}}(\kappa_{Q}(w))\right)(T_{w}\kappa_{Q}(X))=\left(\kappa_{Q}^{*}\omega_{L_{g^{c}}}(w)\right) (\Gamma_{L_{g}}^{c}(w),X).
	\end{equation*}
	Using Proposition \ref{liftedforms}, the last expression reduces to
	\begin{equation*}
		\left( \omega_{L_{g}}^{c}(w) \right)(\Gamma_{L_{g}}^{c}(w),X)=\left( i_{\Gamma_{L_{g}}^{c}(w)}\omega_{L_{g}}^{c}(w) \right)(X)=\left( i_{\Gamma_{L_{g}}}\omega_{L_{g}} \right)^{c}(w)(X),
	\end{equation*}
	where we used \eqref{inner:complete} in the last equality. Using now the geometric equation of motion the last line becomes
	\begin{equation*}
		\left( dE_{L_{g}} \right)^{c}(w)(X)=\left( dE_{L_{g}}^{c} \right)(w)(X)=\left( d(E_{L_{g^{c}}}\circ \kappa_{Q} ) \right)(w)(X),
	\end{equation*}
	where we used \eqref{differential:completelift} and Proposition \ref{liftedforms}. Using now the definition of pullback and the geometric equations of motion we get
	\begin{equation*}
		\left( dE_{L_{g^{c}}} (\kappa_{Q}(w))\right)(T_{w}\kappa_{Q}(X))=\left( i_{\Gamma_{L_{g^{c}}}\kappa_{Q}(w)}\omega_{L_{g^{c}}} (\kappa_{Q}(w))\right)(T_{w}\kappa_{Q}(X)).
	\end{equation*}
	Therefore, since $\omega_{L_{g^{c}}}$ is non-degenerate we deduce
	\begin{equation*}
		\Gamma_{L_{g^{c}}}(\kappa_{Q}(w))=(T_{w}\kappa_{Q})(\Gamma_{L_{g}}^{c})(w),
	\end{equation*}
	from where the first item follows.
	
	Next we prove the second item. By Proposition \eqref{liftedforms}, if $Z:I\rightarrow TQ$ is a curve on the tangent bundle such that $\kappa_{Q}\circ \dot{Z}:I\rightarrow TTQ$ is an integral curve of $\Gamma_{L_{g}}^{c}$, then
	\begin{equation*}
		\kappa_{Q}\circ \dot{Z}(t)=\left( T_{Z(0)}\phi_{t}^{\Gamma_{L_{g}}} \right) (	\kappa_{Q}\circ \dot{Z}(0)),
	\end{equation*}
	where $\phi_{t}^{\Gamma_{L_{g}}}$ is the flow of the vector field $\Gamma_{L_{g}}$. Projecting the previous equation using $T\tau_{Q}$ we get that
	\begin{equation*}
		\tau_{TQ}\circ \dot{Z}(t)=\left( T_{Z(0)}(\tau_{Q} \circ \phi_{t}^{\Gamma_{L_{g}}}) \right) (\kappa_{Q}\circ \dot{Z}(0)),
	\end{equation*}
	One one hand, note that $\tau_{TQ}\circ \dot{Z}(t)$ is just the curve $Z(t)$, by definition of tangent vector field to a curve. On the other hand, let $V:I\rightarrow TQ$ be a curve with initial velocity such that $V'(0)=\kappa_{Q}\circ \dot{Z}(0)$. Then, the expression above may be rewritten as
	\begin{equation*}
		Z(t)=\left. \frac{d}{ds} \right|_{s=0}\left( (\tau_{Q} \circ \phi_{t}^{\Gamma_{L_{g}}}) (V(s))\right).
	\end{equation*}
	Therefore, $Z$ is an infinitesimal variation vector field for a family of trajectories of $\Gamma_{L_{g}}$. Hence, it is a Jacobi field for the Riemannian metric $g$.
\end{proof}

\section{Nonholonomic Jacobi fields for mechanical Lagrangian systems}\label{appd}

Suppose we are given a nonholonomic system $(L,\D)$  with Lagrangian function of the form
\begin{equation}\label{mechanical:Lagrangian}
L(v_{q})=L_{h}(v_{q})-V\circ\tau_{Q} (v_{q}),
\end{equation}
where $h$ is a pseudo-Riemannian metric on $Q$ and $V:Q\rightarrow \R$ is called the \textit{potential} energy {(a real $C^{\infty}$-function on $TQ$). $\tau_{Q}:TQ\rightarrow Q$} is the canonical projection.

Given a pseudo-Riemannian metric $h$, define the \textit{gradient vector field with respect to $h$} $\text{grad}_{h} V$
\begin{equation}\label{grad:definition}
	\text{grad}_{h}V=\sharp_{h}(dV),
\end{equation}
where $\sharp_{h}:\Omega^{1}(Q)\rightarrow \mathfrak{X}(Q)$ is the inverse isomorphism of $\flat_{h}$.

First we prove a Lemma that will allow us to extend some results we have already proved for kinetic type Lagrangian functions to the mechanical type Lagrangian functions with little effort.

\begin{lemma}\label{L:lemma}
	Let $h$ be a pseudo-Riemannian metric on $Q$, $V$ a potential function on $Q$ and $L$ a mechanical Lagrangian associated to $h$ and $V$ defined as in \eqref{mechanical:Lagrangian}. If $\theta_{L}$, $\omega_{L}$ and $E_{L}$ are the Poincaré-Cartan 1-form, the Poincaré-Cartan 2-form and the Lagrangian energy with respect to $L$, respectively, then we have that
	\begin{equation*}
		\theta_{L}=\theta_{L_{h}}, \quad \omega_{L}=\omega_{L_{h}}, \quad E_{L}=L_{h}+V\circ \tau_{Q}.
	\end{equation*}
\end{lemma}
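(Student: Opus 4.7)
The plan is to exploit the linearity of the maps $S^{*}$, $d$, and $\Delta$ applied to the decomposition $L = L_{h} - V\circ \tau_{Q}$, and to reduce everything to showing that the potential contribution vanishes for $\theta_L$ (hence for $\omega_L$), and reorganizes trivially for $E_L$. The key observation is that $V\circ\tau_{Q}$ is precisely the vertical lift $V^{v}$ of $V\in C^{\infty}(Q)$ (see equation \eqref{function:lifts} in Appendix A).

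First I would compute $\theta_{L}$. By definition $\theta_{L}=S^{*}dL = S^{*}dL_{h}-S^{*}d(V\circ\tau_{Q})=\theta_{L_{h}}-S^{*}d(V^{v})$. Since the vertical lift of a function is the pullback by $\tau_{Q}$, we have $d(V^{v})=d(\tau_{Q}^{*}V)=\tau_{Q}^{*}(dV)=(dV)^{v}$, the vertical lift of the $1$-form $dV$. Then by the third identity in \eqref{S*lifts} (equivalently, by a direct coordinate check: $S^{*}dq^{i}=0$), one gets $S^{*}(dV)^{v}=0$. Therefore $\theta_{L}=\theta_{L_{h}}$, and the identity $\omega_{L}=\omega_{L_{h}}$ follows immediately by applying $-d$.

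For the energy, I would split $E_{L}=\Delta(L)-L=\Delta(L_{h})-\Delta(V\circ\tau_{Q})-L_{h}+V\circ\tau_{Q}$. Since $V\circ\tau_{Q}$ is basic (pulled back from $Q$) and the Liouville vector field $\Delta$ is vertical, $\Delta(V\circ\tau_{Q})=0$ (this can be read off from $\Delta=v^{i}\partial/\partial v^{i}$). On the other hand $L_{h}$ is fiberwise quadratic, hence homogeneous of degree $2$, so Euler's relation yields $\Delta(L_{h})=2L_{h}$; equivalently $E_{L_{h}}=L_{h}$. Combining these two facts gives $E_{L}=L_{h}+V\circ\tau_{Q}$, which completes the proof.

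No step poses a real obstacle: everything follows from basic properties of $S^{*}$, vertical lifts of functions and $1$-forms, and the quadratic homogeneity of kinetic Lagrangians. The only point where one should be careful is to recognize that $V\circ\tau_{Q}=V^{v}$ in the notation of Appendix A, so that the vanishing $S^{*}(dV)^{v}=0$ from \eqref{S*lifts} applies directly.
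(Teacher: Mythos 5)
Your proof is correct and follows essentially the same route as the paper's: identify $V\circ\tau_{Q}=V^{v}$, use $d(V^{v})=(dV)^{v}$ and $S^{*}$ killing vertical lifts of $1$-forms to get $\theta_{L}=\theta_{L_{h}}$ (hence $\omega_{L}=\omega_{L_{h}}$), then use $\Delta(V^{v})=0$ and $\Delta(L_{h})=2L_{h}$ for the energy. One trivial slip: the identity you need from \eqref{S*lifts} is the second one, $S^{*}\alpha^{v}=0$, not the third ($S^{*}(d\hat{\alpha})=\alpha^{v}$), though your coordinate check $S^{*}dq^{i}=0$ shows you are using the right fact.
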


\begin{proof}
	By the definition of the Poincaré-Cartan 1-form we have that
	\begin{equation*}
		\theta_{L}=S^{*}(dL)=S^{*}(dL_{h}-dV^{v}).
	\end{equation*}
	Note that we used that $V^{v}=V\circ \tau_{Q}$. By \eqref{1formvertical} and since the pullback commutes with the differential one has that
	\begin{equation*}
		(dV)^{v}=d(V^{v}),
	\end{equation*}
	for every $V\in C^{\infty}(Q)$. Then, applying \eqref{S*lifts} we deduce
	\begin{equation*}
		\theta_{L}=S^{*}(dL)=S^{*}(dL_{h})=\theta_{L_{h}}.
	\end{equation*}
	Moreover the equality for the Poincaré-Cartan 2-form follows directly from the above.

	As for the Lagrangian energy just observe that
	\begin{equation*}
		\Delta(V^{v})=0,\quad \text{and} \quad \Delta(L_{h})=2L_{h},
	\end{equation*}
	where $\Delta$ is the Liouville vector field on $TQ$. Then it is clear that
	\begin{equation*}
		E_{L}=\Delta(L)-L=L_{h}+V\circ \tau_{Q}.
	\end{equation*}
\end{proof}

Now we will prove a result which is analogous to Theorem \ref{regularityth}.

\begin{theorem}\label{L:regular}
	Given $L$ a mechanical Lagrangian on $Q$ associated to a pseudo-Riemannian metric $h$ and a potential function $V$ defined as in \eqref{mechanical:Lagrangian} and a distribution $\D$, the nonholonomic system $(L,\D)$ is regular if and only if the distribution $\D$ is non-degenerate, in the sense of Theorem \ref{regularityth}.
\end{theorem}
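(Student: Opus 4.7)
The plan is to reduce the statement to Theorem \ref{regularityth}, which has already established the equivalence between non-degeneracy of $\D$ and regularity of the purely kinetic nonholonomic system $(L_h, \D)$. The essential point is that the regularity conditions (admissibility and compatibility) for the nonholonomic system $(L, \D)$ are formulated entirely in terms of the distributions $\D$, $F = \sharp_{\omega_L}(F^o)$, and the Poincaré-Cartan 2-form $\omega_L$ (through its sharp isomorphism), with $F^o = S^*((T\D)^o)$.

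First I would observe that the distribution $F^o = S^*((T\D)^o)$ depends only on $\D$ and on the vertical endomorphism $S: TTQ \rightarrow TTQ$, both of which are intrinsic to the tangent bundle geometry and are independent of the choice of Lagrangian. Hence $F^o$ is exactly the same distribution along $\D$ for both $L$ and $L_h$.

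Next, by Lemma \ref{L:lemma}, the Poincaré-Cartan 2-forms coincide: $\omega_L = \omega_{L_h}$. Consequently, the associated musical isomorphisms agree, $\sharp_{\omega_L} = \sharp_{\omega_{L_h}}$, so the distribution $F = \sharp_{\omega_L}(F^o)$ and hence $F^\bot$ are identical for both Lagrangians. This means that:
\begin{enumerate}
\item[(i)] the admissibility condition $\dim (T_v \D)^o = \dim F_v^o$ holds for $(L, \D)$ if and only if it holds for $(L_h, \D)$;
\item[(ii)] the compatibility condition $T_v \D \cap (\sharp_{\omega_L})_v(F_v^o) = \{0\}$ for every $v \in \D$ is equivalent to the analogous condition for $(L_h, \D)$.
\end{enumerate}
Therefore $(L, \D)$ is regular if and only if $(L_h, \D)$ is regular. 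The conclusion then follows directly from Theorem \ref{regularityth}, which asserts the equivalence between regularity of $(L_h, \D)$ and the non-degeneracy condition $\D \cap \D^\bot = \{0\}$.

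There is no genuine obstacle in this proof; all the substantive work was already completed in Lemma \ref{L:lemma} (showing that the potential term contributes trivially to $\theta_L$ and $\omega_L$) and in Theorem \ref{regularityth}. The step that most clearly deserves to be highlighted is the invariance of $F^o$ and $\omega_L$ under the addition of a basic function $V \circ \tau_Q$ to $L_h$, since this is what makes the reduction transparent.
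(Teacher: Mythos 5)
Your proposal is correct and follows essentially the same route as the paper: both rest on Lemma \ref{L:lemma} (the potential term $V\circ\tau_Q$ leaves $\theta_L$ and $\omega_L$ unchanged, so $\omega_L=\omega_{L_h}$) and then reduce the statement to Theorem \ref{regularityth}. The only cosmetic difference is that you invoke Theorem \ref{regularityth} as a black box after noting that the admissibility and compatibility conditions depend only on $F^o=S^*((T\D)^o)$ and $\sharp_{\omega_L}=\sharp_{\omega_{L_h}}$, whereas the paper says the proof of Theorem \ref{regularityth} (via the relation $\sharp_{\omega_{L_h}}\circ\alpha^v=-(\sharp_h\circ\alpha)^v$) extends verbatim; these are the same argument.
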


\begin{proof}
	Note that Theorem \ref{regularityth} is a consequence of the Lemma preceding it, which in turn is a consequence of the first relation in \eqref{omegalift}, which by Lemma \ref{L:lemma} remains unchanged for mechanical type Lagrangian functions. Therefore, the proof of Theorem \ref{regularityth} extends to this case.
\end{proof}

Now we prove the result analogous to Theorem \ref{nhgeodesicth}. We characterize the trajectories of $\Gamma_{(L,\D)}$ as the solution of an equation involving the nonholonomic connection $\nabla^{nh}$.

\begin{theorem}\label{Newton:2nd:law}
	Let $L$ be a mechanical Lagrangian on $Q$ associated to a pseudo-Riemannian metric $h$ and a potential function $V$ defined as in \eqref{mechanical:Lagrangian}. If $\D$ is a non-degenerate distribution then the trajectories of $\Gamma_{(L,\D)}$ are solutions of
	\begin{equation}\label{eq:2nd:law}
		\nabla^{nh}_{\dot{c}_{v}} \dot{c}_{v}=-P(\emph{\text{grad}}_{h} V \circ c_{v}), \quad \dot{c}_{v}\in\D
	\end{equation}
	with initial conditions in $\D$, where $\nabla^{nh}$ is the nonholonomic connection associated to $h$ and $P:TQ\rightarrow \D$ is the orthogonal projection onto $\D$.
\end{theorem}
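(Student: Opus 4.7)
The plan is to mimic the proof of Theorem \ref{nhgeodesicth} (the kinematic case), carefully tracking the extra contribution coming from the potential $V$. By Theorem \ref{L:regular}, non-degeneracy of $\D$ guarantees that the nonholonomic system $(L,\D)$ is regular, so the nonholonomic SODE $\Gamma_{(L,\D)}$ is well defined and its trajectories lie in $\D$; in particular $\dot{c}_v \in \D$ is automatic. Moreover, Lemma \ref{L:lemma} tells us that the Poincaré--Cartan $2$-form of $L$ coincides with $\omega_{L_h}$, while the Lagrangian energy picks up a basic term: $E_L = L_h + V\circ\tau_Q$. Thus the geometric equation for $\Gamma_{(L,\D)}$ is essentially the one used in the proof of Theorem \ref{nhgeodesicth}, modified only by this extra basic summand in $E_L$.

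The key step is to pair the geometric equation with the complete lift $X^c$ of a section $X\in\Gamma(\D)$. Just as in Theorem \ref{nhgeodesicth}, the right-hand side vanishes because $SX^c=X^v\in \mathfrak{X}(\D)$ annihilates $(T\D)^o$, leaving
\[
\omega_{L_h}(\Gamma_{(L,\D)},X^c) \;=\; X^c(L_h) + X^c(V^v).
\]
For the first summand we use Lemma \ref{cvlift}, together with $L_{\Le_{X}h}=2L_{(\nabla^{h}X)}$ and the identity $\theta_{L_{(\nabla^{h}X)}}(\Gamma_{(L,\D)})=2L_{(\nabla^{h}X)}$ valid for any SODE along $\D$; for the second we use the defining property of the complete lift, $X^c(V^v)=X(V)\circ\tau_Q$. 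Combined with the expression \eqref{omegalift} for $\flat_{\omega_{L_h}}(X^c)$, this yields, at any $u\in\D$ with $\tau_Q(u)=q$,
\[
\Gamma_{(L,\D)}(u)\bigl(\widehat{\flat_h(X)}\bigr) \;=\; h(\nabla^{h}_{u}X,u) \;-\; X(V)(q).
\]

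Evaluating this identity along the base integral curve $c_v:I\to Q$ of $\Gamma_{(L,\D)}$, the left-hand side becomes $\tfrac{d}{dt}\bigl(h(X\circ c_v,\dot{c}_v)\bigr)$. Expanding using compatibility of $\nabla^{h}$ with $h$, and rewriting $X(V)=dV(X)=h(\mathrm{grad}_h V,X)$ via the definition \eqref{grad:definition}, cancellation of the term $h(\nabla^{h}_{\dot{c}_v}X,\dot{c}_v)$ leaves
\[
h\bigl(X\circ c_v,\; \nabla^{h}_{\dot{c}_v}\dot{c}_v + \mathrm{grad}_h V\circ c_v\bigr) \;=\; 0
\qquad \text{for every } X\in\Gamma(\D).
\]
Since $X$ was arbitrary this means $P\bigl(\nabla^{h}_{\dot{c}_v}\dot{c}_v\bigr)=-P(\mathrm{grad}_h V\circ c_v)$. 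Finally, because $\dot{c}_v\in\D$, the identity $P(\nabla^{h}_{\dot{c}_v}\dot{c}_v)=\nabla^{nh}_{\dot{c}_v}\dot{c}_v$ recalled in the proof of Theorem \ref{nhgeodesicth} converts this into the desired Newton-type equation \eqref{eq:2nd:law}.

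The routine parts are the manipulations of complete lifts and the use of \eqref{omegalift}, which are already carried out in the kinematic case. The only genuinely new point, and the one I would underline, is the correct handling of the potential contribution: one must verify that $X^c$ applied to the basic function $V^v$ produces precisely $X(V)\circ\tau_Q$, and that translating $X(V)$ into $h(\mathrm{grad}_h V, X)$ via $\sharp_h$ is the step that lets us project with $P$ and identify the result as $-P(\mathrm{grad}_h V\circ c_v)$. No further obstacle is expected, since the signature of $h$ plays no role beyond being non-degenerate and the non-degeneracy of $\D$ with respect to $h$ is exactly what Theorem \ref{L:regular} requires.
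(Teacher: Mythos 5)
Your proposal is correct and follows essentially the same route as the paper's own proof: both repeat the argument of Theorem \ref{nhgeodesicth} with $L_h$ replaced by $E_L=L_h+V\circ\tau_Q$ (via Lemma \ref{L:lemma}), pair the geometric equation with $X^c$ for $X\in\Gamma(\D)$, track the extra term $X^c(V^v)=X(V)\circ\tau_Q$ to obtain $\Gamma_{(L,\D)}(u)(\widehat{\flat_h(X)})=h(\nabla^h_u X,u)-dV(X)(\tau_Q(u))$, and then evaluate along $c_v$, use metric compatibility and the arbitrariness of $X$ to conclude $P(\nabla^h_{\dot c_v}\dot c_v+\mathrm{grad}_h V\circ c_v)=0$, i.e.\ $\nabla^{nh}_{\dot c_v}\dot c_v=-P(\mathrm{grad}_h V\circ c_v)$.
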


\begin{remark}
	We remark that, in the absence of constraints (where $\D=TQ$), then trajectories of $\Gamma_{(L,TQ)}$ are just the trajectories of $\Gamma_{L}$, the Lagrangian vector field associated to $L$. And a curve $c_{v}:I\rightarrow Q$ is a trajectory of $\Gamma_{L}$ if and only if it satisfies
	\begin{equation*}
	\nabla^{h}_{\dot{c}_{v}} \dot{c}_{v}=\text{grad}_{h} V \circ c_{v},
	\end{equation*}
	where $h$ is the pseudo-Riemannian metric to which the kinetic part of $L$ is associated and $\nabla^{h}$ is the corresponding Levi-Civita connection.
\end{remark}

\begin{proof}
	We may follow the proof of Theorem \ref{nhgeodesicth} making the appropriate changes. The first change is to substitute $L_{h}$ by $E_{L}$ on the geometric equation. Then, following the same arguments we will eventually get
	\begin{equation*}
		\Gamma_{(L_{h},\D)}(u)(\widehat{\flat_{h}(X)})=h(\nabla^{h}_u X, u)-X^{c}(u)(V\circ\tau_{Q}),
	\end{equation*}
	which by \eqref{completelift} is equivalent to
	\begin{equation*}
		\Gamma_{(L,\D)}(u)(\widehat{\flat_{h}(X)})=h(\nabla^{h}_u X, u)-dV(X)\circ\tau_{Q}(u).
	\end{equation*}

	Then, evaluating the last equation over the curve $\dot{c}_{v}$, noting that $\Gamma_{(L_{h},\D)}(\dot{c}_{v})$ is just $\ddot{c}_{v}$ and using \eqref{grad:definition}, we deduce
	\begin{equation*}
		\ddot{c}_{v}\left( \widehat{\flat_{h}(X)} \right)=	h(\nabla^{h}_{\dot{c}_{v}}X,\dot{c}_{v})-h(\text{grad}_{h} V\circ c_{v},X\circ c_{v}).
	\end{equation*}
	As in the proof of Theorem \ref{nhgeodesicth}, the last expression reduces to
	\begin{equation*}
		h(X\circ c_{v}, \nabla^{h}_{\dot{c}_{v}}\dot{c}_{v}+\text{grad}_{h} V\circ c_{v})= 0
	\end{equation*}
	and since $X$ is an arbitrary section of $\D$ we deduce that
		\begin{equation*}
	P(\nabla^{h}_{\dot{c}_{v}}\dot{c}_{v}+\text{grad}_{h} V\circ c_{v})= 0,
	\end{equation*}
	which finishes the proof.
\end{proof}
{Now, we will introduce the complete lift of a mechanical nonholonomic system following the same ideas that in the case of a kinetic nonholonomic system.}
\begin{definition}
	If $g$ is a Riemannian metric on $Q$ and $V$ is a potential function forming a mechanical Lagrangian $L$ as in \eqref{mechanical:Lagrangian}, the nonholonomic system $(\tilde{L},\D^{c})$ is the complete lift of the nonholonomic system $(L,\D)$, with the Lagrangian function of mechanical type $\tilde{L}:TTQ\rightarrow \R$ defined by
	\begin{equation*}
		\tilde{L}=L_{g^{c}}-V^{c}\circ \tau_{TQ},
	\end{equation*}
	where $\tau_{TQ}:T(TQ)\rightarrow TQ$ is the canonical projection and $\D^{c}$ is the complete lift of the distribution $\D$.
\end{definition}
{Next, we will prove some results which will be used later}.
\begin{lemma}
	If $V$ is a function on $Q$, its complete lift satisfies
	\begin{equation}
		(V\circ \tau_{Q})^{c}\circ\kappa_{Q}=V^{c}\circ \tau_{TQ}.
	\end{equation}
\end{lemma}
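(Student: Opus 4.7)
The plan is to reduce everything to the standard duality property of the canonical involution, namely that $\kappa_{Q}$ intertwines the two vector bundle projections on $TTQ$: $T\tau_{Q}\circ\kappa_{Q}=\tau_{TQ}$ and $\tau_{TQ}\circ\kappa_{Q}=T\tau_{Q}$. This is built into the very construction of $\kappa_{Q}$ (cf.\ the two vector bundle structures on $TTQ$ described at the start of Appendix \ref{appc}) and is visible from the local expression \eqref{def:local:can:invol}.

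First I would rewrite the left-hand side using the fact that, for any smooth function $f$ on a manifold $M$, its complete lift to $TM$ satisfies $f^{c}(w)=\langle df,w\rangle$. Applied to $f=V\circ\tau_{Q}\in C^{\infty}(TQ)$, and using $d(V\circ\tau_{Q})=\tau_{Q}^{*}(dV)$, this yields
\begin{equation*}
(V\circ\tau_{Q})^{c}(w)=\langle \tau_{Q}^{*}(dV),w\rangle=\langle dV, T\tau_{Q}(w)\rangle=V^{c}(T\tau_{Q}(w))
\end{equation*}
for every $w\in TTQ$. In other words, $(V\circ\tau_{Q})^{c}=V^{c}\circ T\tau_{Q}$.

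Composing with $\kappa_{Q}$ on the right and invoking $T\tau_{Q}\circ\kappa_{Q}=\tau_{TQ}$ then gives
\begin{equation*}
(V\circ\tau_{Q})^{c}\circ\kappa_{Q}=V^{c}\circ T\tau_{Q}\circ\kappa_{Q}=V^{c}\circ\tau_{TQ},
\end{equation*}
which is the claimed identity. As a sanity check one can verify both sides in the natural fibred coordinates $(q^{i},\dot q^{i},v^{i},\dot v^{i})$ on $TTQ$: both reduce to $\frac{\partial V}{\partial q^{i}}\dot q^{i}$ after applying \eqref{def:local:can:invol}.

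There is no real obstacle; the only subtlety is being precise about which complete lift is being taken, since here $V\in C^{\infty}(Q)$ but $V\circ\tau_{Q}\in C^{\infty}(TQ)$, so the outer ``$(\ \cdot\ )^{c}$'' in $(V\circ\tau_{Q})^{c}$ is the complete lift of a function on $TQ$ to $TTQ$, whereas $V^{c}$ is the complete lift of a function on $Q$ to $TQ$. Once this distinction is kept straight, the argument collapses to the elementary observation above together with the intertwining property of $\kappa_{Q}$.
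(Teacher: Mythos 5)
Your proof is correct and follows essentially the same route as the paper's: both rest on the definition \eqref{function:lifts} of the complete lift, the chain rule $d(V\circ\tau_{Q})=\tau_{Q}^{*}(dV)$, and the fact that $\kappa_{Q}$ is a vector bundle morphism intertwining $T\tau_{Q}$ and $\tau_{TQ}$. The only difference is cosmetic: you isolate the intermediate identity $(V\circ\tau_{Q})^{c}=V^{c}\circ T\tau_{Q}$ before composing with $\kappa_{Q}$, while the paper carries out the same computation in one chain of equalities.
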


\begin{proof}
	Observe that for any $Y\in TTQ$ we have that
	\begin{equation*}
		(V\circ \tau_{Q})^{c}\circ\kappa_{Q}(Y)=d(V\circ\tau_{Q}) (\kappa_{Q}(Y))
	\end{equation*}
	by \eqref{function:lifts}. Then we have that
	\begin{equation*}
		(V\circ \tau_{Q})^{c}\circ\kappa_{Q}(Y)=dV (T\tau_{Q}\circ\kappa_{Q}(Y)))=dV(\tau_{TQ}(Y))=V^{c}\circ \tau_{TQ} (Y),
	\end{equation*}
	where we used the fact that $\kappa_{Q}$ is a morphism between the vector bundles $T\tau_{Q}$ and $\tau_{TQ}$ and \eqref{function:lifts}.
\end{proof}

\begin{lemma}\label{complete:gradient}
	Let $g$ be a Riemannian metric, $g^{c}$ its complete lift and $V$ a function on $Q$. We have that
	\begin{equation}
		\emph{\text{grad}}_{g^{c}} V^{c}=(\emph{\text{grad}}_{g}V)^{c}.
	\end{equation}
\end{lemma}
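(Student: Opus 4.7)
The plan is to prove a stronger, more natural identity and then specialize. Namely, I will first establish that for \emph{every} vector field $X\in\mathfrak{X}(Q)$,
\begin{equation*}
\flat_{g^{c}}(X^{c})=(\flat_{g}(X))^{c},
\end{equation*}
after which the lemma follows by taking $X=\text{grad}_{g}V$. Indeed, applying $\sharp_{g^{c}}$ to both sides of the above identity gives $X^{c}=\sharp_{g^{c}}((\flat_{g}(X))^{c})$, and with $X=\text{grad}_{g}V=\sharp_{g}(dV)$ we have $\flat_{g}(X)=dV$, hence
\begin{equation*}
(\text{grad}_{g}V)^{c}=\sharp_{g^{c}}((dV)^{c})=\sharp_{g^{c}}(d(V^{c}))=\text{grad}_{g^{c}}V^{c},
\end{equation*}
where the middle equality uses \eqref{differential:completelift} and the last one uses the definition \eqref{grad:definition}.

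To prove the general identity it suffices to check equality on a generating set of vector fields on $TQ$, namely on complete and vertical lifts $Y^{c},Y^{v}$ with $Y\in\mathfrak{X}(Q)$. Using the defining relations of $g^{c}$ from \eqref{def:complete:lift:g} one gets
\begin{equation*}
g^{c}(X^{c},Y^{c})=(g(X,Y))^{c}, \qquad g^{c}(X^{c},Y^{v})=(g(X,Y))^{v}.
\end{equation*}
On the other side, by the definition of the complete lift of a $1$-form in \eqref{1formcomplete},
\begin{equation*}
(\flat_{g}(X))^{c}(Y^{c})=(\flat_{g}(X)(Y))^{c}=(g(X,Y))^{c},
\end{equation*}
and by the coordinate expressions given in the excerpt (with $\alpha=\flat_{g}(X)=\alpha_{i}dq^{i}$ and $Y^{v}=Y^{i}\partial/\partial v^{i}$) one checks directly that
\begin{equation*}
(\flat_{g}(X))^{c}(Y^{v})=\alpha_{i}Y^{i}=(\flat_{g}(X)(Y))^{v}=(g(X,Y))^{v}.
\end{equation*}
The two $1$-forms $\flat_{g^{c}}(X^{c})$ and $(\flat_{g}(X))^{c}$ therefore agree on $Y^{c}$ and $Y^{v}$ for every $Y$, hence they coincide.

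The only step that is not immediately supplied by the identities collected in Appendix \ref{appa} is the evaluation $\alpha^{c}(Y^{v})=(\alpha(Y))^{v}$; this is not listed in \eqref{alphalift}, but it is an immediate consequence of the coordinate expression for $\alpha^{c}$ given just after \eqref{1formvertical}, so it is the only small gap to verify in passing. Aside from that, the argument is a pure bookkeeping check on the generating lifts, and no real obstacle arises.
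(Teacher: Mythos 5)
Your proof is correct and is essentially the paper's own argument: both proofs verify the identity by evaluating against the generating lifts $Z^{c}$, $Z^{v}$, using \eqref{def:complete:lift:g}, \eqref{1formcomplete}, \eqref{differential:completelift} and the non-degeneracy of $g^{c}$. The only difference is packaging: you factor the computation through the general intertwining identity $\flat_{g^{c}}(X^{c})=(\flat_{g}(X))^{c}$ and then specialize to $X=\text{grad}_{g}V$, whereas the paper runs the identical computation directly on the gradient; note also that the ``gap'' you flag, namely $\alpha^{c}(Y^{v})=(\alpha(Y))^{v}$, is used silently in the paper's proof as well (in the step $d(V^{c})(Z^{v})=(dV(Z))^{v}$), so your explicit verification of it is, if anything, more careful.
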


\begin{proof}
	On one hand, for an arbitrary $Z\in\mathfrak{X}(Q)$ we have that
	\begin{equation*}
		g^{c}(\text{grad}_{g^{c}} V^{c},Z^{c}) = d(V^{c}) (Z^{c})=(dV(Z))^{c},
	\end{equation*}
	where we used the definition of $\text{grad}_{g^{c}} V^{c}$, \eqref{differential:completelift} and \eqref{1formcomplete}. Then, using the definition of $\text{grad}_{g}V$
	\begin{equation*}
	g^{c}(\text{grad}_{g^{c}} V^{c},Z^{c}) =(g(\text{grad}_{g}V,Z))^{c}=g^{c}((\text{grad}_{g}V)^{c},Z^{c}),
	\end{equation*}
	where we used \eqref{def:complete:lift:g}. On the other hand, using the same arguments we deduce
	\begin{equation*}
	g^{c}(\text{grad}_{g^{c}} V^{c},Z^{v}) = d(V^{c}) (Z^{v})=(dV(Z))^{v}=(g(\text{grad}_{g}V,Z))^{v}.
	\end{equation*}
	Using again \eqref{def:complete:lift:g} we conclude that
	\begin{equation*}
	g^{c}(\text{grad}_{g^{c}} V^{c},Z^{v}) =g^{c}((\text{grad}_{g}V)^{c},Z^{v}).
	\end{equation*}
	Since $g^{c}$ is non-degenerate and $Z^{v}$, $Z^{c}$ are a basis of sections for $\mathfrak{X}(TQ)$ we have finished the proof.
\end{proof}
{Now, we will prove a similar result to Theorem \ref{MainTheorem} for the more general case of a mechanical nonholonomic system.}
\begin{theorem}\label{MainTheoremD}
	Let $(L,\D)$ be a nonholonomic system of mechanical type and $\Gamma_{(L,\D)}$ the associated nonholonomic SODE. Then
	\begin{enumerate}
		\item[(i)] The complete lift $(\tilde{L},\D^{c})$ is a regular nonholonomic system.
		\item[(ii)] Let $\Gamma_{(\tilde{L},\D^{c})}\in \mathfrak{X}(\D^{c})$ be the nonholonomic SODE associated with the system $(\tilde{L},\D^{c})$ and $\kappa_{Q}:TTQ\rightarrow TTQ$ the canonical involution. Then
		\begin{equation}
		\Gamma_{(\tilde{L},\D^{c})}=T\kappa_{Q}|_{T\D} \circ \Gamma_{(L,\D)}^{c} \circ \kappa_{Q}|_{\D^{c}}
		\end{equation}
		and so we have
		\begin{enumerate}
			\item[(a)] $\Gamma_{(\tilde{L},\D^{c})}$ is $T\tau_{Q}|_{\D^{c}}$-projectable over $\Gamma_{(L,\D)}$; \\
			\item[(b)] The trajectories of $\Gamma_{(\tilde{L},\D^{c})}$ are just the Jacobi fields for the nonholonomic system $(L,\D)$.
		\end{enumerate}	
	\end{enumerate}
\end{theorem}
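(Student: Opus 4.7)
My strategy is to mimic the proof of Theorem \ref{MainTheorem} essentially verbatim, with the observation that the only new ingredient, the potential $V$, interacts cleanly with both the canonical involution $\kappa_Q$ and the complete lift. The crucial technical input is the identity $V^{c}\circ\tau_{TQ}=(V\circ\tau_{Q})^{c}\circ\kappa_{Q}$ established just before the theorem, together with Lemma \ref{L:lemma} that identifies $\omega_{L}$ with $\omega_{L_{g}}$ and $\omega_{\tilde{L}}$ with $\omega_{L_{g^{c}}}$.

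For item (i), I would invoke Theorem \ref{L:regular} applied to the mechanical Lagrangian $\tilde{L}$ and the distribution $\D^{c}$: regularity of $(\tilde{L},\D^{c})$ is equivalent to $\D^{c}\cap(\D^{c})^{\bot}=\{0\}$, where the orthogonal is taken with respect to the pseudo-Riemannian metric $g^{c}$. This last condition was already verified in Proposition \ref{regularityprop}, so (i) is immediate.

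For item (ii), I would start from the geometric equations defining $\Gamma_{(L,\D)}$, take their complete lift to $TTQ$ using \eqref{differential:completelift} and \eqref{inner:complete}, and then pull back by $\kappa_{Q}$, obtaining
\begin{equation*}
\left.\left(i_{(\kappa_{Q})_{*}\Gamma_{(L,\D)}^{c}}\kappa_{Q}^{*}\omega_{L}^{c}-d(\kappa_{Q}^{*}E_{L}^{c})\right)\right|_{\D^{c}}\in\Gamma(\tilde{F}^{o}),
\end{equation*}
where $\tilde{F}^{o}$ is the distribution from Lemma \ref{Flemma}. The key computation is then to show that $\kappa_{Q}^{*}\omega_{L}^{c}=\omega_{\tilde{L}}$ and $\kappa_{Q}^{*}E_{L}^{c}=E_{\tilde{L}}$. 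The first follows by combining Lemma \ref{L:lemma} (twice) with Proposition \ref{liftedforms}: $\omega_{L}=\omega_{L_{g}}$ gives $\kappa_{Q}^{*}\omega_{L}^{c}=\kappa_{Q}^{*}\omega_{L_{g}}^{c}=\omega_{L_{g^{c}}}=\omega_{\tilde{L}}$. For the energy, Lemma \ref{L:lemma} gives $E_{L}=L_{g}+V\circ\tau_{Q}$, so
\begin{equation*}
\kappa_{Q}^{*}E_{L}^{c}=L_{g}^{c}\circ\kappa_{Q}+(V\circ\tau_{Q})^{c}\circ\kappa_{Q}=L_{g^{c}}+V^{c}\circ\tau_{TQ}=E_{\tilde{L}},
\end{equation*}
using Lemma \ref{liftedLagrangianlemma} on the first summand and the potential identity above on the second. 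Hence $(\kappa_{Q})_{*}\Gamma_{(L,\D)}^{c}$ solves the geometric equation for $(\tilde{L},\D^{c})$; since this system is regular by part (i), uniqueness of the nonholonomic SODE yields the claimed formula $\Gamma_{(\tilde{L},\D^{c})}=T\kappa_{Q}|_{T\D}\circ\Gamma_{(L,\D)}^{c}\circ\kappa_{Q}|_{\D^{c}}$.

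Finally, the consequences (a) and (b) would be proved in exactly the same way as in Theorem \ref{MainTheorem}: part (a) is a diagram chase using $T\tau_{Q}\circ\kappa_{Q}=\tau_{TQ}$ and the fact that the complete lift of a SODE projects back to the SODE; part (b) follows by transporting an integral curve of $\Gamma_{(\tilde{L},\D^{c})}$ through $\kappa_{Q}$ to an integral curve of $\Gamma_{(L,\D)}^{c}$, recognising this as the tangent of an infinitesimal variation of trajectories of $\Gamma_{(L,\D)}$. I expect the main (though mild) obstacle to be checking that the potential contribution is correctly handled under both the complete lift and the canonical involution; once the identity $V^{c}\circ\tau_{TQ}=(V\circ\tau_{Q})^{c}\circ\kappa_{Q}$ is in hand, the rest of the argument is parallel to the kinetic case.
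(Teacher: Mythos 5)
Your proposal is correct and follows essentially the same route as the paper: item (i) via Theorem \ref{L:regular} combined with Proposition \ref{regularityprop}, and item (ii) by establishing $\kappa_{Q}^{*}\omega_{L}^{c}=\omega_{\tilde{L}}$ and $\kappa_{Q}^{*}E_{L}^{c}=E_{\tilde{L}}$ through Lemma \ref{L:lemma}, Proposition \ref{liftedforms}, Lemma \ref{liftedLagrangianlemma} and the identity $(V\circ\tau_{Q})^{c}\circ\kappa_{Q}=V^{c}\circ\tau_{TQ}$, then invoking uniqueness of the regular nonholonomic SODE and repeating the arguments of Theorem \ref{MainTheorem} for (a) and (b). The only cosmetic difference is that you read the energy identity in the opposite direction (starting from $\kappa_{Q}^{*}E_{L}^{c}$ rather than from $E_{\tilde{L}}$), which is immaterial.
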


\begin{proof}
	Item (i) is a consequence of Theorem \ref{L:regular} together with the proof of Proposition \ref{regularityprop}, where we see that the distribution $\D^{c}$ is non-degenerate.
	
	Before proving item (ii), note that by Lemma \ref{L:lemma} and Proposition \ref{liftedforms} we have that
	\begin{equation*}
		\omega_{\tilde{L}}=\omega_{L_{g^{c}}}=\kappa_{Q}^{*}\omega_{L_{g}}^{c}=\kappa_{Q}^{*}\omega_{L}^{c}
	\end{equation*}
	and
	\begin{equation*}
		E_{\tilde{L}}=E_{L_{g^{c}}}+V^{c}\circ \tau_{TQ}=(E_{L_{g}})^{c}\circ\kappa_{Q}+(V\circ \tau_{Q})^{c}\circ\kappa_{Q}=E_{L}^{c}\circ\kappa_{Q}.
	\end{equation*}
	
	Now we may follow the proof of Theorem \ref{MainTheorem} and we conclude by following exactly the same steps that
	$(\kappa_{Q})_{*}\Gamma_{(L,\D)}^{c}$ is a vector field on $\D^{c}$ and by uniqueness of nonholonomic vector field, it coincides with $\Gamma_{(\tilde{L},\D^{c})}$, i.e.,
	\begin{equation}
	\Gamma_{(\tilde{L},\D^{c})}=T\kappa_{Q}|_{T\D} \circ \Gamma_{(L,\D)}^{c} \circ \kappa_{Q}|_{\D^{c}}.
	\end{equation}
	
	The remaining statements in item $(ii)$ are just consequences of the properties of the complete lift and the canonical involution and we can follow the proof {of Theorem \ref{MainTheorem}} with the necessary changes:
	\begin{equation*}
	\begin{split}
	TT\tau_{Q}(\Gamma_{(\tilde{L},\D^{c})}) & =T(T\tau_{Q}\circ\kappa_{Q}|_{T\D}) \circ \Gamma_{(L,\D)}^{c} \circ \kappa_{Q}|_{\D^{c}} \\
	& = T(\tau_{TQ}|_{T\D})(\Gamma_{(L,\D)}^{c} \circ \kappa_{Q}|_{\D^{c}}) \\
	& = \Gamma_{(L,\D)}\circ \tau_{TQ}|_{T\D} \circ\kappa_{Q}|_{\D^{c}} \\
	& = \Gamma_{(L,\D)}.
	\end{split}
	\end{equation*}
		
	Now if $W:I\rightarrow TQ$ is a trajectory of $\Gamma_{(\tilde{L},\D^{c})}$, then $\kappa_{Q}\circ\dot{W}:I\rightarrow T\D$ is an integral curve of $\Gamma_{(L,\D)}^{c}$. Therefore we may write it as
	\begin{equation*}
	\kappa_{Q}\circ\dot{W}(t)=\left( T_{W(0)} \phi_{t}^{\Gamma_{(L,\D)}} \right) (\kappa_{Q}\circ\dot{W}(0)).
	\end{equation*}
	So,
	\begin{equation*}
	W(t)=T\tau_{Q}(\kappa_{Q}\circ\dot{W})=T\tau_{Q}\left( \left( T_{W(0)} \phi_{t}^{\Gamma_{(L,\D)}} \right) (\kappa_{Q}\circ\dot{W}(0)) \right)
	\end{equation*}
	and
	\begin{equation*}
	W(t)= \left( T_{W(0)} (\tau_{Q}\circ \phi_{t}^{\Gamma_{(L,\D)}}) \right) (\kappa_{Q}\circ\dot{W}(0)).
	\end{equation*}
	Let now $v:I\rightarrow \D$ be a curve such that its initial velocity is $v'(0)=\kappa_{Q}\circ\dot{W}(0)$. Then
	\begin{equation*}
	W(t)= \left. \frac{d}{ds} \right|_{s=0} \left( \tau_{Q}\circ \phi_{t}^{\Gamma_{(L,\D)}}\right) (v(s)).
	\end{equation*}
	Hence, $W$ is a nonholonomic Jacobi field for $\Gamma_{(L,\D)}$, since it is an infinitesimal variation of nonholonomic trajectories of $\Gamma_{(L,\D)}$.
	
\end{proof}
{Finally, we will present the Jacobi equation for the nonholonomic Jacobi fields associated with a mechanical nonholonomic system.}
\begin{theorem}
	Let $(L,\D)$ be a mechanical nonholonomic system, $\nabla^{nh}$ the nonholonomic connection on $Q$ with torsion and curvature tensors denoted by $T^{nh}$ and $R^{nh}$, respectively, and $W:I\rightarrow TQ$ a vector field along a nonholonomic trajectory $c:I\rightarrow Q$. Then $W$ is a nonholonomic Jacobi field if and only if 
	\begin{equation}\label{nhjacobiV}
		\begin{split}
			& \nabla^{nh}_{\dot{c}}\nabla^{nh}_{\dot{c}}W+\nabla^{nh}_{\dot{c}}T^{nh}(W,\dot{c})+R^{nh}(W,\dot{c})\dot{c}+\nabla^{nh}_{W}(P(\emph{\text{grad}}_{g}V\circ c))=0, \\
			& \dot{W}(t)\in \D^{c}.
		\end{split}
	\end{equation}
\end{theorem}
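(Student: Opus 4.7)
The strategy is to reduce \eqref{nhjacobiV} to the Newton equation of the complete-lift mechanical nonholonomic system $(\tilde L,\D^c)$, mirroring the proof of Theorem \ref{Jacobi:equation}. By Theorem \ref{MainTheoremD}, $W$ is a nonholonomic Jacobi field for $(L,\D)$ if and only if $W$ is a trajectory of $\Gamma_{(\tilde L,\D^c)}$. Since $(\tilde L,\D^c)$ is a regular mechanical system on $TQ$ with pseudo-Riemannian metric $g^c$ and potential $V^c\circ\tau_{TQ}$, Theorem \ref{Newton:2nd:law} characterises its trajectories as those $W$ satisfying $\dot W\in\D^c$ and $\nabla^{NH}_{\dot W}\dot W=-P^T(\text{grad}_{g^c}V^c\circ W)$. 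Applying Proposition \ref{completeconnection} ($\nabla^{NH}=(\nabla^{nh})^c$), Lemma \ref{complete:gradient} ($\text{grad}_{g^c}V^c=(\text{grad}_gV)^c$) and the identity $P^T(X^c)=(PX)^c$ coming from items 2 and 4 of Lemma \ref{Projectorlift}, this rewrites as
\begin{equation*}
(\nabla^{nh})^c_{\dot W}\dot W=-Y^c\circ W,\qquad\dot W\in\D^c,
\end{equation*}
where $Y:=P(\text{grad}_g V)$.

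Decomposing both sides in the basis $\{\partial/\partial q^k,\partial/\partial\dot q^k\}$ of $T_{W(t)}TQ$, the $\partial/\partial q^k$-components yield $\nabla^{nh}_{\dot c}\dot c=-Y\circ c$, which is the Newton equation of Theorem \ref{Newton:2nd:law} for $c$ and is automatic from the hypothesis that $c$ is a nonholonomic trajectory of $(L,\D)$. The $\partial/\partial\dot q^k$-components read, by Proposition \ref{complete:connection:coordinates} and equation \eqref{liftedgeo2},
\begin{equation*}
\ddot W^k+\dot q^i\dot q^jW^l\frac{\partial\Gamma^k_{ij}}{\partial q^l}+\dot q^j\dot W^i(\Gamma^k_{ij}+\Gamma^k_{ji})=-W^j\frac{\partial Y^k}{\partial q^j}.
\end{equation*}
Call this the \emph{vertical Newton equation}; the proof will be complete once I verify its equivalence with \eqref{nhjacobiV}.

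For this I revisit the index computation from the proof of Theorem \ref{Jacobi:equation}. The only $\ddot q$-terms in $\nabla^{nh}_{\dot c}\nabla^{nh}_{\dot c}W+\nabla^{nh}_{\dot c}T^{nh}(W,\dot c)+R^{nh}(W,\dot c)\dot c$ are $\ddot q^iW^j\Gamma^k_{ij}$ and $W^i\ddot q^j T^k_{ij}$; by antisymmetry of $T^k_{ij}$ their sum collapses to $W^i\ddot q^j\Gamma^k_{ij}$. Substituting the mechanical Newton relation $\ddot q^j=-\Gamma^j_{pq}\dot q^p\dot q^q-Y^j$ in place of the geodesic relation produces a single extra contribution $-W^iY^j\Gamma^k_{ij}$ on top of the kinetic outcome, so
\begin{equation*}
\nabla^{nh}_{\dot c}\nabla^{nh}_{\dot c}W+\nabla^{nh}_{\dot c}T^{nh}(W,\dot c)+R^{nh}(W,\dot c)\dot c=\left(\text{LHS of vertical Newton eq.}-W^iY^j\Gamma^k_{ij}\right)\frac{\partial}{\partial q^k}.
\end{equation*}
Adding $\nabla^{nh}_WY=(W^i\partial Y^k/\partial q^i+W^iY^j\Gamma^k_{ij})\partial/\partial q^k$ to both sides cancels the spurious $-W^iY^j\Gamma^k_{ij}$, leaving \eqref{nhjacobiV} equivalent to the vertical Newton equation. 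The main technical obstacle is the careful bookkeeping of the $\ddot q$-terms to confirm that the new substitution contributes exactly the single correction $-W^iY^j\Gamma^k_{ij}$; the converse direction then follows by reversing the chain, the horizontal Newton equation being supplied by $c$ being a nonholonomic trajectory and the constraint $\dot W\in\D^c$ by hypothesis.
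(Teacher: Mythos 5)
Your proposal is correct and follows essentially the same route as the paper's proof: reduction to the lifted system via Theorem \ref{MainTheoremD} and Theorem \ref{Newton:2nd:law}, translation through Proposition \ref{completeconnection}, Lemma \ref{complete:gradient} and Lemma \ref{Projectorlift}, and then the coordinate comparison with the mechanical substitution $\ddot q^j=-\Gamma^j_{pq}\dot q^p\dot q^q-(P(\mathrm{grad}_g V))^j$. Your bookkeeping is a mild streamlining of the paper's (you collapse the two $\ddot q$-terms via antisymmetry of $T^k_{ij}$ before substituting, so the single correction $-W^iY^j\Gamma^k_{ij}$ cancels against the Christoffel part of $\nabla^{nh}_W Y$, whereas the paper distributes the potential terms through each tensor separately), but the argument is the same.
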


\begin{proof}
	We already know by Theorem \ref{Newton:2nd:law} that if $c_{v}:I\rightarrow Q$ is a trajectory of $\Gamma_{(L, \D)}$, then it satisfies equations \eqref{eq:2nd:law}. Moreover, by Theorem \ref{MainTheoremD} if $W:I\rightarrow TQ$ is a Jacobi field for the nonholonomic system $(L,\D)$, it is a trajectory of the mechanical Lagrangian $\Gamma_{(\tilde{L},\D^{c})}$. As a result, $W$ must satisfy the equations
	\begin{equation*}
		\nabla^{NH}_{\dot{W}} \dot{W}=-P^{T}(\text{grad}_{g^{c}} V^{c}\circ W), \quad \dot{W}\in \D^{c},
	\end{equation*}
	where $\nabla^{NH}$ is the linear connection on $TQ$ defined by
	\begin{equation*}
	\nabla^{NH}_{X} Y:=P^{T}(\nabla_{X}^{g^{c}} Y)+\nabla^{g^{c}}_{X}[P'^{T}(Y)], \quad \text{for} \ X,Y \in \mathfrak{X}(TQ),
	\end{equation*}
	where $\nabla^{g^{c}}$ is the Levi-Civita connection of $g^{c}$, $P^{T}:TTQ\rightarrow \D^{c}$ is the associated  orthogonal projector onto the distribution $\D^{c}$ and $P'^{T}:TTQ\rightarrow (\D^{c})^{\bot}$ is the orthogonal projector onto $(\D^{c})^{\bot}$, the orthogonal distribution.
	
	On one hand, by Proposition \ref{completeconnection}, $\nabla^{NH}=\nabla^{c}$. On the other hand, by Lemma \ref{complete:gradient}, we have that
	\begin{equation*}
		\text{grad}_{g^{c}} V^{c}=(\text{grad}_{g}V)^{c}
	\end{equation*}
	and by Lemma \ref{Projectorlift} we have that
	\begin{equation*}
		P^{T}(X^{c})=(P(X))^{c}.
	\end{equation*} 
	Hence, $W$ must satisfy
	\begin{equation*}
		\nabla^{c}_{\dot{W}} \dot{W}=-(P(\text{grad}_{g}V))^{c}\circ W, \quad \dot{W}\in \D^{c}.
	\end{equation*}
	
	Now we will follow Proposition \ref{complete:connection:coordinates} and  keep the same notation that was introduced in the corresponding proof. Suppose the local expression of $P(\text{grad}_{g}V)$ is
	\begin{equation*}
		P(\text{grad}_{g}V)(q^{i})=(P(\text{grad}_{g}V))^{i} \frac{\partial}{\partial q^{i}}.
	\end{equation*}
	
	 Equation \eqref{liftedgeo2} together with
	\begin{equation*}
		(P(\text{grad}_{g}V))^{c}\circ W=(P(\text{grad}_{g}V))^{i}\left. \frac{\partial}{\partial q^{i}}\right|_{W(t)}+W^{j}(t)\frac{\partial (P(\text{grad}_{g}V))^{i}}{\partial q^{j}}\left. \frac{\partial}{\partial \dot{q}^{i}}\right|_{W(t)}
	\end{equation*}
	and the fact that $c_{v}$ satisfies the equations \eqref{eq:2nd:law} imply that
	\begin{equation*}
		\begin{split}
			(\nabla^{nh})^{c}_{\dot{W}}\dot{W}+ & (P(\text{grad}_{g}V))^{c}(W(t)) =\left(\ddot{W}^{k}+\dot{q}^{i}\dot{q}^{j}W^{l}\frac{\partial \Gamma_{ij}^{k}}{\partial q^{l}} \right.\\
			& \left. +\dot{q}^{j}\dot{W}^{i}(\Gamma_{ij}^{k}+\Gamma_{ji}^{k})+W^{j}(t)\frac{\partial (P(\text{grad}_{g}V))^{i}}{\partial q^{j}}\right)\frac{\partial}{\partial \dot{q}^{k}}.
		\end{split}
	\end{equation*}
	Using similar techniques to those applied in the proof of Theorem \ref{Jacobi:equation}, we are able to prove that
	\begin{equation*}
		\begin{split}
			(\nabla^{nh} & )^{c}_{\dot{W}}\dot{W} + (P(\text{grad}_{g}V))^{c}(W(t))= \\
			&\left( \nabla^{nh}_{\dot{c}}\nabla^{nh}_{\dot{c}}W+\nabla^{nh}_{\dot{c}}T^{nh}(W,\dot{c})+R^{nh}(W,\dot{c})\dot{c}+\nabla^{nh}_{W}(P(\text{grad}_{g}V\circ c)) \right)^{v}
		\end{split}
	\end{equation*}
	Indeed, by following the proof and having in mind that now the curve $c_{v}$ locally satisfies the equation
	\begin{equation*}
	\ddot{q}^{i}=-\Gamma_{jk}^{i}\dot{q}^{j}\dot{q}^{k}-(P(\text{grad}_{g}V))^{i},
	\end{equation*}
	we deduce that the sum of $\nabla^{nh}_{\dot{c}}\nabla^{nh}_{\dot{c}}W$ and $R^{nh}(W,\dot{c})\dot{c}$ is
	\begin{equation*}
		\begin{split}
			& \left[ \ddot{W}^{m}+2\dot{W}^{j}\dot{q}^{i}\Gamma_{ij}^{m}+W^{i}\dot{q}^{j}\dot{q}^{l}\Gamma_{jl}^{k}T_{ik}^{m}+W^{j}\dot{q}^{i}\dot{q}^{l}T_{ij}^{k}\Gamma_{lk}^{m} \right.\\
			 & \left. +\dot{q}^{i}\dot{q}^{l}W^{j}\left( \frac{\partial T_{ij}^{m}}{\partial q^{l}}+\frac{\partial \Gamma_{il}^{m}}{\partial q^{j}} \right) -(P(\text{grad}_{g}V))^{i}W^{j}\Gamma_{i j}^{m}\right]\frac{\partial}{\partial q^{m}}.
		\end{split}
	\end{equation*}
	Since
	\begin{equation}
		\begin{split}
			\nabla^{nh}_{\dot{c}}T^{nh}(W,\dot{c})=& \left( \dot{W}^{i}\dot{q}^{j}T_{ij}^{m}-W^{i}\Gamma_{lk}^{j}\dot{q}^{l}\dot{q}^{k}T_{ij}^{m}-W^{i}(P(\text{grad}_{g}V))^{j}T_{i j}^{m} \right. \\
			& \left. -W^{i}\dot{q}^{j}\frac{\partial T_{ji}^{m}}{\partial q^{l}}\dot{q}^{l}-W^{i}\dot{q}^{j}T_{ji}^{k}\dot{q}^{l}\Gamma_{lk}^{m} \right)\frac{\partial}{\partial q^{m}}
		\end{split}
	\end{equation}
	and
	\begin{equation*}
		\nabla^{nh}_{W}(P(\text{grad}_{g}V))=\left( W^{i}\frac{\partial (P(\text{grad}_{g}V))^{j}}{\partial q^{i}}+W^{i}(P(\text{grad}_{g}V))^{k}\Gamma_{i k}^{j} \right)\frac{\partial}{\partial q^{j}}
	\end{equation*}
	we easily see that we obtain the expected result and the theorem follows.	
\end{proof}

\section*{Conclusions and future work}

In this paper, we have introduced  for the first time  a rigorous definition of Jacobi fields for nonholonomic systems in pure Riemannian geometric  terms, we have also characterized them and finally  we have given some equivalent versions of the nonholonomic Jacobi equation (see Table 1).

In a future paper, we will continue this program studying  conjugate points, the possible relation with minimizing properties of nonholonomic geodesics where the exponential nonholonomic map (see \citep*{AMM}) will play an important role.

{Another interesting goal, to be covered in an upcoming publication, is to extend the results of this paper on Jacobi fields to the reduction of nonholonomic mechanical systems with symmetries. This kind of systems have been extensively discussed in the literature (see \citep*{koiller, BKMM96, CaLeMAMa, GG2008, CdLMM2009, GrLeMaMa, LMD2010, Ba1, Ba2, BaFe}).}

Finally, we remark that many of the results in this paper may be extended for Jacobi fields in sub-riemannian geometry \citep*{GhIgMaPa}.

\section*{Acknowledgements}

D. Mart{\'\i}n de Diego and A. Simoes are supported  by I-Link Project (Ref: linkA20079), Ministerio de Ciencia e Innovaci\'on ( Spain) under grants  MTM\-2016-76702-P and ``Severo Ochoa Programme for Centres of Excellence'' in R\&D (SEV-2015-0554). A. Anahory Simoes is supported by the FCT research fellowship SFRH/BD/129882/2017. J.C. Marrero  acknowledges the partial support by European Union (Feder) grant PGC2018-098265-B-C32.


\bibliography{thesisreferences}{}


\end{document}